\documentclass[reqno]{amsart}

\usepackage{tikz-cd}
\usepackage{enumitem}
\usepackage{fullpage,dsfont}
\usepackage{leftindex}
\usepackage{tabularx}
\usepackage{mathtools,leftindex,tensor}
\usepackage{mathabx}   
\usepackage{graphicx}  

\usepackage{hyperref} 

\usepackage{parskip}
\makeatletter 
\def\thm@space@setup{%
 \thm@preskip=\parskip \thm@postskip=0pt
}
\def\th@remark{%
  \thm@headfont{\itshape}%
  \normalfont 
  \thm@preskip\parskip \thm@postskip=0pt
}
\makeatother

\usepackage[nobysame,alphabetic,initials,msc-links]{amsrefs}

\usepackage[final]{pdfpages}

\usepackage{multirow}

\usepackage{stmaryrd}

\usepackage{array}

\DefineSimpleKey{bib}{how}
\DefineSimpleKey{bib}{mrclass}
\DefineSimpleKey{bib}{mrnumber}
\DefineSimpleKey{bib}{fjournal}
\DefineSimpleKey{bib}{mrreviewer}

\renewcommand{\PrintDOI}[1]{%
  \href{http://dx.doi.org/#1}{{\tt DOI:#1}}%
}
\renewcommand{\eprint}[1]{#1}
\BibSpec{book}{%
    +{}  {\PrintPrimary}                {transition}
    +{.} { \PrintDate}                  {date}
    +{.} { \textit}                     {title}
    +{.} { }                            {part}
    +{:} { \textit}                     {subtitle}
    +{,} { \PrintEdition}               {edition}
    +{}  { \PrintEditorsB}              {editor}
    +{,} { \PrintTranslatorsC}          {translator}
    +{,} { \PrintContributions}         {contribution}
    +{,} { }                            {series}
    +{,} { \voltext}                    {volume}
    +{,} { }                            {publisher}
    +{,} { }                            {organization}
    +{,} { }                            {address}
    +{,} { }                            {status}
    +{,} { \PrintDOI}                   {doi}
    +{,} { \PrintISBNs}                 {isbn}
    +{}  { \parenthesize}               {language}
    +{}  { \PrintTranslation}           {translation}
    +{;} { \PrintReprint}               {reprint}
    +{.} { }                            {note}
    +{.} {}                             {transition}
    +{}  {\SentenceSpace \PrintReviews} {review}
}
\BibSpec{article}{%
    +{}  {\PrintAuthors}                {author}
    +{,} { \textit}                     {title}
    +{.} { }                            {part}
    +{:} { \textit}                     {subtitle}
    +{,} { \PrintContributions}         {contribution}
    +{.} { \PrintPartials}              {partial}
    +{,} { }                            {journal}
    +{}  { \textbf}                     {volume}
    +{}  { \PrintDatePV}                {date}
    +{,} { \issuetext}                  {number}
    +{,} { \eprintpages}                {pages}
    +{,} { }                            {status}
    +{,} { \PrintDOI}                   {doi}
    +{,} { \eprint}        {eprint}
    +{}  { \parenthesize}               {language}
    +{}  { \PrintTranslation}           {translation}
    +{;} { \PrintReprint}               {reprint}
    +{.} { }                            {note}
    +{.} {}                             {transition}
    +{}  {\SentenceSpace \PrintReviews} {review}
}
\BibSpec{collection.article}{%
    +{}  {\PrintAuthors}                {author}
    +{,} { \textit}                     {title}
    +{.} { }                            {part}
    +{:} { \textit}                     {subtitle}
    +{,} { \PrintContributions}         {contribution}
    +{,} { \PrintConference}            {conference}
    +{}  {\PrintBook}                   {book}
    +{,} { }                            {booktitle}
    +{,} { \PrintDateB}                 {date}
    +{,} { pp.~}                        {pages}
    +{,} { }                            {publisher}
    +{,} { }                            {organization}
    +{,} { }                            {address}
    +{,} { }                            {status}
    +{,} { \PrintDOI}                   {doi}
    +{,} { \eprint}        {eprint}
    +{}  { \parenthesize}               {language}
    +{}  { \PrintTranslation}           {translation}
    +{;} { \PrintReprint}               {reprint}
    +{.} { }                            {note}
    +{.} {}                             {transition}
    +{}  {\SentenceSpace \PrintReviews} {review}
}
\BibSpec{misc}{%
  +{}{\PrintAuthors}  {author}
  +{,}{ \textit}      {title}
  +{.}{ }             {how}
  +{}{ \parenthesize} {date}
  +{,} { available at \eprint}        {eprint}
  +{,}{ available at \url}{url}
  +{,}{ }             {note}
  +{.}{}              {transition}
}
\usepackage{amssymb, amsfonts, amsxtra, amsmath}
\usepackage{mathrsfs}
\usepackage{mathdots}
\usepackage{wasysym}
\usepackage[all]{xy}
\usepackage{bbm}
\usepackage{calc}
\usepackage{accents}

\usepackage[bbgreekl]{mathbbol}			

\usepackage{stackengine} 

\numberwithin{equation}{section}

\DeclareSymbolFontAlphabet{\mathbb}{AMSb}	
\DeclareSymbolFontAlphabet{\mathbbl}{bbold}	

\newtheorem{Theorem}{Theorem}[section]
\newtheorem*{Theorem*}{Theorem}
\newtheorem{Def}[Theorem]{Definition}
\newtheorem*{Def*}{Def}
\newtheorem{Lem}[Theorem]{Lemma}
\newtheorem{Prop}[Theorem]{Proposition}
\newtheorem{Cor}[Theorem]{Corollary}

\newtheorem{Rem}[Theorem]{Remark}

\newtheorem{Exa}[Theorem]{Example}

\newcommand\XGX{\X\times_\G \bar{\X}}
\newcommand\XrX{\X\rtimes\G \ltimes\bar{\X}}

\mathchardef\mhyph="2D

\DeclareMathOperator{\id}{\mathrm{id}}
\DeclareMathOperator{\Rep}{\mathrm{Rep}}
\DeclareMathOperator{\Irr}{\mathrm{Irr}}

\newcommand{\op}{\mathrm{op}}
\newcommand{\msU}{\mathscr{U}}

\newcommand{\Mor}{\operatorname{Mor}}
\renewcommand{\H}{\mathbb{H}}
\newcommand{\mcG}{\mathcal{G}}
\newcommand{\mcH}{\mathcal{H}}
\newcommand{\mcK}{\mathcal{K}}
\newcommand{\mcO}{\mathcal{O}}

\newcommand{\Ww}{\mathds{W}}

\newcommand{\C}{\mathbb{C}}
\newcommand{\G}{\mathbb{G}}
\newcommand{\R}{\mathbb{R}}
\newcommand{\X}{\mathbb{X}}

\newcommand{\mH}{\mathcal{H}}

\newcommand{\Corr}{\mathrm{Corr}}
\newcommand{\ovot}{\bar{\otimes}}

\allowdisplaybreaks

\begin{document}

\title{Quantum hypergroups arising from ergodic coactions}
\author{Joeri De Ro}
\address{Institute of Mathematics of the Polish Academy of Sciences}
\email{jdero@impan.pl}

\begin{abstract} 
    Given a compact quantum group $\G$ and an ergodic action $ L^\infty(\X)\stackrel{\alpha}\curvearrowleft \G$ with algebraic core $\mcO(\X)$, we show that the unital $*$-algebra $\mcO(\XGX):= \mcO(\X)\square\overline{\mcO(\X)}$ carries the structure of an algebraic compact quantum hypergroup. This $*$-algebra admits two (generally distinct) $C^*$-algebra completions (`reduced' and `universal'), both carrying the structure of a $C^*$-algebraic compact quantum hypergroup. This provides a large class of new examples of (analytical) compact quantum hypergroups. We provide characterizations of coamenability for these compact quantum hypergroups, making use of the theory of equivariant correspondences.
\end{abstract}

\maketitle

\section{Introduction and preliminaries}

Compact quantum hypergroups are objects that simultaneously generalize classical (compact) hypergroups (see the book \cite{HB95} and many references therein) and compact quantum groups \cites{Wor87a, Wor87b, Wor98}. Comparing with the definition of a compact quantum group, the main difference is that the coassociative map $\Delta$ of the quantum hypergroup is no longer required to be multiplicative. In the analytical setting (i.e. $C^*$-algebraic or $W^*$-algebraic framework), it is most natural to ask that $\Delta$ is a unital completely positive (= ucp) map instead. Compact quantum hypergroups have been systematically studied from the purely algebraic point of view in \cites{DVD11a, DVD11b} and from the $C^*$-algebraic point of view in \cite{CV99}.

The first non-trivial examples of (analytical) compact quantum hypergroups that were constructed and investigated are the double coset spaces $\H\backslash \G/\H$, arising from an inclusion $\H\le \G$ of compact quantum groups \cites{CV92, Vai95a, Vai95b, PV99, CV99, FS00}. This was later generalized and studied in the case where $\H$ is a compact quasi-subgroup \cite{KS20} of $\G$ \cites{FS09, Zh20,DFW21}. In \cite{Ka01}, a construction of $C^*$-algebraic compact quantum hypergroups is given via certain conditional expectations, which in particular leads to the so-called Delsart hypergroups.
Note also that in the recent paper \cite{BGP26}, it was shown that an inclusion of unital simple $C^*$-algebras together with a conditional expectation of finite Watani-index leads to a $C^*$-algebraic quantum hypergroup-like structure.

In the present paper, we provide a large class of new examples of compact quantum hypergroups. More specifically, we will construct a compact quantum hypergroup from an arbitrary ergodic action $ L^\infty(\X)\stackrel{\alpha}\curvearrowleft \G$, where $\G$ is a compact quantum group. This construction is inspired by the technique where a new compact quantum group is obtained by `reflection around a Galois object' \cites{DC09a, DC11,DC17}. If $\H$ is a compact (quasi-)subgroup of $\G$, our construction for the action $ L^\infty(\H\backslash \G)\stackrel{\Delta_\G}\curvearrowleft \G$ recovers the known examples of compact quantum hypergroups arising from compact (quasi-)subgroups that were discussed above. However, our construction also deals with the strictly more general class of coideal von Neumann algebras.

Here is the concrete plan for this paper. In \emph{Section \ref{matrixcoefficients}}, we consider an ergodic (right) action $L^\infty(\X)\stackrel{\alpha}\curvearrowleft \G$ with algebraic core $\mcO(\X)$.
Write $L^\infty(\bar{\X}):= \overline{L^\infty(\X)}$ for the conjugate von Neumann algebra, which carries a natural `mirrored' (left) action $\G\stackrel{\bar{\alpha}}\curvearrowright L^\infty(\bar{\X})$. The main object of interest for this paper is the unital $*$-algebra
$$\mcO(\XGX):= \mcO(\X)\stackrel{\G}\square\mcO(\bar{\X})=\{z\in \mcO(\X)\odot \mcO(\bar{\X}): (\alpha\odot \id)(z)= (\id \odot \bar{\alpha})(z)\},$$
and various completed versions ($C^*$-algebraic and von Neumann algebraic) of it. We show that $\mcO(\XGX)$ admits a linearly generating set of `matrix coefficients' $Z_\pi(\mu, \nu)$ indexed by $\pi \in \Irr(\G)$ and $\mu, \nu\in \mcG_\pi$, where $\mcG_\pi$ is a canonical finite-dimensional Hilbert space associated to the action $ L^\infty(\X)\stackrel{\alpha}\curvearrowleft \G$. Using these matrix coefficients, we can then define the linear maps
\begin{align*}
    &\Delta_{\XGX}: \mcO(\XGX)\to \mcO(\XGX)\odot \mcO(\XGX)\\
    &\epsilon_{\XGX}: \mcO(\XGX)\to \C, \quad S_{\XGX}: \mcO(\XGX)\to \mcO(\XGX)
\end{align*}
via the expected formulas
\begin{align*}
    \Delta_{\XGX}(Z_\pi(\mu, \nu))= \sum_{j=1}^{m_\pi} &Z_\pi(\mu, f_j^\pi)\otimes Z_\pi(f_j^\pi, \nu), \quad \epsilon_{\XGX}(Z_\pi(\mu, \nu))= \langle \mu, \nu\rangle, \quad S_{\XGX}(Z_\pi(\mu, \nu))= Z_\pi(\nu, \mu)^*,
\end{align*}
where $\{f_j^\pi\}_{j=1}^{m_\pi}$ is an orthonormal basis for $\mcG_\pi$ and $\pi \in \Irr(\G)$. These maps endow $\mcO(\XGX)$ with the structure of an \emph{algebraic compact quantum hypergroup} in the sense of \cite{DVD11a} (see Theorem \ref{hypergroup}). This construction generalizes (and is strongly inspired by) the well-known case where the action $C(\X)\curvearrowleft \G$ is free, in which case we obtain a genuine compact quantum group as opposed to a compact quantum hypergroup. 

As is well-known, every algebraic compact quantum group admits both a universal $C^*$-algebraic and a reduced $C^*$-algebraic version. An analogous statement is also true for the algebraic compact quantum hypergroup $\mcO(\XGX)$, but the main techniques from the theory of compact quantum groups no longer apply. The primary analytical difficulty is extending $\Delta_{\XGX}: \mcO(\XGX)\to \mcO(\XGX)\odot \mcO(\XGX)$ to a ucp map on the relevant completions. Let us briefly explain the difficulties and strategies, worked out in detail in \emph{Section \ref{universalversion}} (universal version) and \emph{Section \ref{reducedversion}} (reduced version). 

\textbf{Universal version.} In general, a universal $C^*$-envelope for $\mcO(\XGX)$ may not exist \cite{DCDR25}*{Remark 2.10}. Rather, one embeds $\mcO(\XGX)$ in the double crossed product $*$-algebra $\mcO(\XrX)$ \cites{AS21,DCDR25}, which is a $*$-algebra that admits a universal $C^*$-envelope $C^u_0(\XrX)$ in which $\mcO(\XrX)$ embeds. Through the embedding $\mcO(\XGX)\hookrightarrow C_0^u(\XrX)$, the $*$-algebra $\mcO(\XGX)$ obtains a `universal' $C^*$-norm. The corresponding $C^*$-algebra completion will be denoted by $C^u(\XGX)$. We then show that $C^u(\XGX)$ admits the structure of a $C^*$-algebraic compact quantum hypergroup in the sense of \cite{CV99}. Our main tool to do this is the theory of equivariant correspondences developed in \cites{DCDR24, DCDR25}, which enters the picture through the isomorphism
$\Rep_*(C^u_0(\XrX))\cong \Corr^\G(L^\infty(\X), L^\infty(\X))$
of $W^*$-categories \cite{DCDR25}*{Proposition 2.14}. 

\textbf{Reduced version.} Through the inclusion $\mcO(\XGX)\subseteq L^\infty(\X)\ovot L^\infty(\bar{\X})\subseteq B(L^2(\X)\otimes L^2(\bar{\X}))$, the $*$-algebra $\mcO(\XGX)$ obtains a natural `reduced' $C^*$-norm.
Starting from an algebraic compact quantum group, one extends the algebraic coproduct to its reduced $C^*$-or $W^*$-algebraic completion by first introducing the Kac-Takesaki operator and then showing that it implements the coproduct. A strategy like this no longer works in the setting of quantum hypergroups, since the coassociative map is not multiplicative in general. Rather, we will make use of the Galois map associated to the action $L^\infty(\X)\stackrel{\alpha}\curvearrowleft \G$ \cites{DC09a,DC11} to construct a normal ucp coassociative map
\begin{equation}\label{comulti}\Delta_{\XGX}^r: L^\infty(\XGX)\to L^\infty(\XGX)\ovot L^\infty(\XGX),\end{equation}
where $L^\infty(\XGX):= L^\infty(\X)\stackrel{\G}\square L^\infty(\bar{\X})= \{z \in L^\infty(\X)\ovot L^\infty(\bar{\X}): (\alpha\otimes \id)(z)= (\id \otimes \bar{\alpha})(z)\}.$
It is then shown that $\Delta_{\XGX}^r$ is an extension of $\Delta_{\XGX}: \mcO(\XGX)\to \mcO(\XGX)\odot \mcO(\XGX)$.
In fact, the construction of \eqref{comulti} is carried out in the more general setting where $\G$ is a locally compact quantum group and $L^\infty(\X)\stackrel{\alpha}\curvearrowleft \G$ is an ergodic, integrable action (see Theorem \ref{mainsec3}). The failure of multiplicativity of \eqref{comulti} reflects the fact that the Galois map need not be unitary.

In \emph{Section \ref{examples}}, we work out two examples of general nature. In the first example, we look at the general class of coideal von Neumann algebras $L^\infty(\X)\curvearrowleft \G$, so that $\X= \H \backslash \G$ where $\H$ is a `generalized quantum subgroup of $\G$'. In that case, we prove that $\mcO(\XGX)\cong \mcO(\H\backslash \G/\H):=\mcO(\H\backslash \G)\cap R_\G(\mcO(\H\backslash \G))$. In particular, our discussion of this example illuminates how to turn the `double coset space' $\H\backslash \G/\H$ into a compact quantum hypergroup beyond the case where $\H$ is a compact (quasi-)subgroup of $\G$. In the second example, we consider the natural ergodic action $L^\infty(\G)\curvearrowleft \G^{\op}\times \G$. We make a careful analysis of the spectral data of this action (Proposition \ref{spectraldata}). The  $*$-algebra of the associated algebraic compact quantum hypergroup can be identified with the fusion $*$-algebra $\mcO(\operatorname{Fus}(\G))$. Under this identification, we recover the natural compact quantum hypergroup structure on $\mcO(\operatorname{Fus}(\G))$ for which the (normalized) irreducible characters become group-like. This example of an algebraic compact quantum hypergroup is certainly not new. Rather, the interesting part is that the quantum hypergroup structure carries over to its relevant completions, which appears to be non-obvious if one does not pass through the dynamical formalism developed in this paper. 

As discussed above, the space $\mcO(\XGX)$ has two natural $C^*$-norms: the universal norm $\|\cdot\|_u$, coming from the embedding $\mcO(\XGX)\hookrightarrow C_0^u(\XrX)$, and the reduced norm $\|\cdot\|_r$, coming from the inclusion $\mcO(\XGX)\subseteq L^\infty(\X)\ovot L^\infty(\bar{\X})\subseteq B(L^2(\X)\otimes L^2(\bar{\X}))$. Comparing these two norms leads to a natural notion of \emph{coamenability} for the compact quantum hypergroup $\XGX$. This is discussed in detail in \emph{Section \ref{coamenable}}, where it is proven that $\|\cdot\|_r= \|\cdot\|_u$ on $\mcO(\XGX)$ if and only if the counit $\epsilon_{\XGX}: \mcO(\XGX)\to \C$ is $\|\cdot\|_r$-bounded if and only if $L^\infty(\X)$ is $\G$-injective (Theorem \ref{counitbounded}) . Recalling that $L^\infty(\G)$ is $\G$-injective if and only if $\hat{\G}$ is amenable \cites{Cr17, DH24, DR26}, this can be seen as a dynamical generalization of Tomatsu's celebrated result on the equivalence of amenability and strong amenability for discrete quantum groups \cite{Tom06}.

\subsection{Conventions and notations} All vector spaces in this paper are defined over the field $\C$. The symbol $\odot$ denotes the (algebraic) tensor product (over $\C$). We assume that inner products of pre-Hilbert spaces are anti-linear in the first variable. Given a subset $S$ of a normed linear space $V$, we write $[S]$ for the norm-closure of the linear span of $S$. More generally, if $(V, \tau)$ is a topological vector space and $S\subseteq V$, we write $[S]^\tau$ for the $\tau$-closure of the linear span of $S$ inside $V$. Given $C^*$-algebras $C,D$, their minimal tensor product is denoted by $C\otimes D$. The multiplier $C^*$-algebra of $C$ is denoted by $M(C)$. Given von Neumann algebras $A$ and $B$, their von Neumann algebra tensor product is denoted by $A \ovot B$.  Given Hilbert spaces $\mcH, \mcK$, their Hilbert space tensor product is written by $\mcH\otimes \mcK$, and we write $\Sigma= \Sigma_{\mcH, \mcK}: \mcH\otimes \mcK\to \mcK\otimes \mcH$ for the switch map. 

\subsection{Locally compact quantum groups} \cites{KV00, KV03, VV03} A \emph{von Neumann bialgebra} is a pair $(M, \Delta)$ where $M$ is a von Neumann algebra and $\Delta: M \to M \ovot M$ is a unital, normal, isometric $*$-homomorphism such that $(\Delta \otimes \id)\circ \Delta = (\id \otimes \Delta)\circ \Delta$. A \emph{locally compact quantum group} $\G$ is a von Neumann bialgebra $(L^\infty(\G), \Delta_\G)$ for which there exist normal, semifinite, faithful weights $\varphi_\G, \psi_\G: L^\infty(\G)_+\to [0, \infty]$ such that $(\id\otimes \varphi_\G)\Delta_\G(x)= \varphi_\G(x)1$ for all $x\in \mathscr{M}_\varphi^+$ and $(\psi_\G\otimes \id)\Delta_\G(x)= \psi_\G(x)1$ for all $x\in \mathscr{M}_\psi^+$. These are called the \emph{left Haar weight} and the \emph{right Haar weight}, and they can be shown to be unique up to a non-zero positive scalar multiple. We denote the predual of the von Neumann algebra $L^\infty(\G)$ by $L^1(\G)$, so that $L^1(\G)^*\cong L^\infty(\G)$.

We denote the GNS-Hilbert space of $\varphi_\G$ by $L^2(\G)$. We use it to view $L^\infty(\G)\subseteq B(L^2(\G))$. One can canonically identify the GNS-Hilbert space of $\psi_\G$ with $L^2(\G)$. Using the GNS-maps
$\Lambda_{\varphi_\G}: \mathscr{N}_{\varphi_\G}\to L^2(\G)$ and  $\Lambda_{\psi_\G}: \mathscr{N}_{\psi_\G}\to L^2(\G)$
we can then define the unitaries 
$V_\G\in B(L^2(\G))\ovot L^ \infty(\G)$ and $W_\G \in L^\infty(\G)\ovot B(L^2(\G))$, 
called respectively \emph{right} and \emph{left} regular unitary representation of $\G$. They are uniquely determined by
\begin{align*}
(\id\otimes \omega)(V_\G) \Lambda_{\psi_\G}(x) &= \Lambda_{\psi_\G}((\id\otimes \omega)\Delta_\G(x)),
\qquad \omega \in L^1(\G),\quad x\in \mathscr{N}_{\psi_\G},\\
(\omega \otimes \id)(W_{\G}^*)\Lambda_{\varphi_\G}(x) &= \Lambda_{\varphi_\G}((\omega\otimes \id)\Delta_\G(x)),\qquad \omega \in L^1(\G),\quad x\in \mathscr{N}_{\varphi_\G}.
\end{align*} They are \emph{multiplicative unitaries} meaning that
\[
V_{\G,12}V_{\G,13}V_{\G,23} = V_{\G,23}V_{\G,12},\qquad W_{\G,12}W_{\G,13}W_{\G,23}= W_{\G,23}W_{\G,12},
\]
and they implement the coproduct of $L^\infty(\G)$ in the sense that
$$\label{EqComultImpl}
W_\G^*(1\otimes x)W_\G = \Delta_\G(x) = V_\G(x\otimes 1)V_\G^*,\qquad x\in L^\infty(\G).$$
Moreover, we have
$C_0(\G):=[(\omega\otimes \id)(V_\G) \mid \omega \in B(L^2(\G))_*] = [(\id\otimes \omega)(W_\G) \mid \omega \in B(L^2(\G))_*],$
which is a $\sigma$-weakly dense C$^*$-subalgebra of $L^\infty(\G)$. Then $\Delta_\G(C_0(\G))\subseteq M(C_0(\G)\otimes C_0(\G))$. 

We also define the von Neumann algebra
    $L^\infty(\hat{\G}) := [(\omega\otimes \id)(W_\G) \mid \omega \in L^1(\G)]^{\sigma\textrm{-weak}}$ together with the coproduct
$\Delta_{\hat{\G}}: L^\infty(\hat{\G})\to L^\infty(\hat{\G})\ovot L^\infty(\hat{\G})$ given by $\Delta_{\hat{\G}}(\hat{x})= \Sigma W_\G(\hat{x}\otimes 1)W_\G^*\Sigma.$
The pair $(L^\infty(\hat{\G}), \Delta_{\hat{\G}})$ then defines the dual locally compact quantum group $\hat{\G}$. The left invariant weight $\varphi_{\hat{\G}}$ is constructed in a way that ensures the canonical identification $L^2(\G)= L^2(\hat{\G})$. Pontryagin duality then holds: $\hat{\hat{\G}}= \G$.

We write $J_\G$ for the modular conjugation on $L^2(\G)$ associated to the weight $\varphi_\G$ and we write $J_{\hat{\G}}$ for the modular conjugation on $L^2(\G)$ associated with the weight $\varphi_{\hat{\G}}$. We have 
$J_{\hat{\G}}L^\infty(\G)J_{\hat{\G}}=L^\infty(\G),$
so we obtain the anti-$*$-homomorphism
$R_\G: L^\infty(\G)\to L^\infty(\G): x \mapsto J_{\hat{\G}}x^*J_{\hat{\G}}.$
We call $R_\G$ the \emph{unitary antipode} of $\G$. There is a canonical unimodular complex number $c\in \mathbb{C}$ (cf.\ \cite{KV03}*{Corollary 2.12}) satisfying 
$c J_{\hat{\G}}J_\G=\overline{c}J_\G J_{\hat{\G}}.$
We write $u_\G:= c J_{\hat{\G}} J_\G$ for the associated self-adjoint unitary. The following identities will be useful:
\begin{align*}
    (J_{\hat{\G}}\otimes J_\G)W_\G(J_{\hat{\G}}\otimes J_\G)=W_\G^*, \quad (J_\G\otimes J_{\hat{\G}})V_\G(J_\G\otimes J_{\hat{\G}})=V_\G^*, \quad (u_\G\otimes 1)V_\G(u_\G\otimes 1)=W_{\G,21}.
\end{align*} 

\subsection{Actions of locally compact quantum groups and equivariant correspondences}\label{equivariant correspondences}

Given a von Neumann algebra $A$, consider its standard form $(L^2(a), \pi_A, J_A, L^2(A)_+)$. We then also consider the standard anti-$*$-representation
$\rho_A: A\to B(L^2(A)): a \mapsto J_A\pi_A(a^*)J_A$, so that $\pi_A(A)'= \rho_A(A)$. Often, we will suppress $\pi_A$ from the notation and identify $A\subseteq B(L^2(A))$.

Let $\G$ be a locally compact quantum group. A (right) unitary $\G$-representation on the Hilbert space $\mcH$ consists of a unitary element $U\in B(\mcH)\ovot L^\infty(\G)$ such that $(\id \otimes \Delta_\G)(U) = U_{12}U_{13}$. Given $\omega \in L^1(\G)$, let us then write $U(\omega):= (\id \otimes \omega)(U)\in B(\mcH)$.

 A (right) $\G$-$W^*$-algebra consists of a pair $(A, \alpha)$ such that $A$ is a von Neumann algebra and $\alpha: A \to A\ovot L^\infty(\G)$ is a unital, isometric, normal $*$-homomorphism satisfying $(\id \otimes \Delta_\G)\alpha= (\alpha\otimes \id)\alpha$. We will often employ the more intuitive notation $A\stackrel{\alpha}\curvearrowleft \G$ and refer to $\alpha$ as a $\G$-action on $A$. 
 
In \cite{Vae01}, it is proven that every $\G$-$W^*$-algebra $(A,\alpha)$ induces a canonical unitary $\G$-representation $U_\alpha\in B(L^2(A))\ovot L^\infty(\G)$, called the \emph{(canonical) unitary implementation of $\alpha$}, such that
\begin{align}
    (\pi_A\otimes \id)\alpha(a)&= U_\alpha(\pi_A(a)\otimes 1)U_\alpha^*, \quad a\in A.\\
\label{unitaryflip}(J_A\otimes J_{\hat{\G}})U_\alpha(J_A\otimes J_{\hat{\G}})&= U_\alpha^*.
\end{align}
Note that these two identities imply that $(\rho_A\otimes R_\G)\alpha(a)= U_\alpha^*(\rho_A(a)\otimes  1)U_\alpha$ for all $a\in A$.

If $(A, \alpha)$ is a (right) $\G$-$W^*$-algebra, 
 there is a natural (left) action $\G \stackrel{\bar{\alpha}}\curvearrowright \overline{A}$, where $\overline{A}$ is the conjugate von Neumann algebra (meaning that only scalar multiplication is altered). It is formally given by $\bar{\alpha}(\bar{a})= R_\G(a_{(1)})^* \otimes \overline{a_{(0)}}$, where $\alpha(a)= a_{(0)}\otimes a_{(1)}$ and $a\in A$.
Note also that $\G\stackrel{\alpha'}\curvearrowright A'$ via
$$\alpha'(a')= U_{\alpha,21}^*(1\otimes a')U_{\alpha,21}, \quad a' \in A',$$
where we view $A\subseteq B(L^2(A))$. The $*$-isomorphism
$I: A'\cong \overline{A}: \rho_A(a^*)\mapsto \overline{a}$ is then $\G$-equivariant.

We call $\alpha$  \emph{ergodic} if the space of $\G$-fixed points $A^\alpha:=\{a\in A\mid \alpha(a)= a\otimes 1\}$ is equal to $\C1$. In that case, we will write $A= L^\infty(\X)$, thinking about $\X$ as the underlying \emph{quantum homogeneous space}. We then also write $(\overline{A}, L^2(A),\pi_A, J_A, L^2(A)_+, \rho_A, U_\alpha)= (L^\infty(\bar{\X}),L^2(\X), \pi_\X, J_\X, L^2(\X)_+, \rho_\X, U_\X)$.

Given right $\G$-$W^*$-algebras $(A, \alpha)$ and $(B, \beta)$, a \emph{$\G$-$A$-$B$-correspondence} \cite{DCDR24}*{Definition 0.4} is a quadruplet $(\mcH, \pi, \rho, U)$ where $\mcH$ is a Hilbert space, $\pi: A \to B(\mcH)$ is a unital, normal $*$-representation, $\rho: B \to B(\mcH)$ is a unital, normal anti-$*$-representation and $U\in B(\mcH)\ovot L^\infty(\G)$ is a unitary $\G$-representation such that
$$\pi(a)\rho(b)= \rho(b)\pi(a), \quad (\pi\otimes \id)\alpha(a)= U(\pi(a)\otimes 1)U^*, \quad (\rho\otimes R_\G)\beta(b)= U^*(\rho(b)\otimes 1)U, \quad a\in A, \quad b \in B.$$
In that case, we write  $\mcH = (\mcH, \pi, \rho, U)\in \Corr^\G(A,B)$.

Let us consider two important examples, of primary interest for this paper \cite{DCDR24}*{Examples 2.2 \& 2.5}:
\begin{enumerate}
    \item The trivial $\G$-$A$-$A$-correspondence is $L^2(A)= (L^2(A), \pi_A, \rho_A, U_\alpha)$.
    \item\vspace{-1.5mm} The \emph{coarse (= regular) $\G$-$A$-$A$-correspondence} is given by the quadruplet
\begin{equation}\label{coarse}
    C_A^\G=(L^2(A)\otimes L^2(\G) \otimes L^2(A), a \mapsto (\pi_A\otimes \id)\alpha(a)\otimes 1, a \mapsto 1\otimes (\rho_\G \otimes \rho_A)(\alpha^{\op}(a)), V_{\G,24}).
\end{equation}
\end{enumerate}

If $\mcH, \mcH'\in \Corr^\G(A,B)$, we write ${}_A\mathscr{L}_B^\G(\mcH, \mcH')$ for the space of bounded linear operators $x: \mcH \to \mcH'$ such that $x\pi(a)= \pi'(a)x, x\rho(b)= \rho'(b)x$ and $(x\otimes 1)U= U'(x\otimes 1)$ for all $a\in A$ and all $b\in B$. In this way, $\Corr^\G(A,B)$ becomes a $W^*$-category. Note also that taking $\G$ to be the trivial group, we simply recover the category $\Corr(A,B)$ of $A$-$B$-correspondence as introduced by Connes \cite{Con80}. When $\G$ is trivial or/and one of the von Neumann algebras $A,B$ is $\C$, we leave it out of the notation. For example, $\mathscr{L}_B(\mcH, \mcH')= \{x\in B(\mcH, \mcH')\mid  \forall b\in B: x\rho(b)= \rho'(b)x\}$.

There is a natural operation
$\boxtimes_B: \Corr^\G(A,B)\times \Corr^\G(B, C)\to \Corr^\G(A,C)$ \cite{DCDR24}*{Section 5.2}.
More concretely, if $\mcH= (\mcH, \pi_\mcH, \rho_\mcH, U_\mcH)\in \Corr^\G(A,B)$ and $\mcK= (\mcK, \pi_\mcK, \rho_\mcK, U_\mcK)\in \Corr^\G(B,C)$, then $\mathscr{L}_B(L^2(B), \mcH)\curvearrowleft B$ via $xb:= x\pi_B(b)$ and $B\curvearrowright \mcK$ via $b\xi:= \pi_\mcK(b)\xi$ where $b\in B, x\in \mathscr{L}_B(L^2(B), \mcH)$ and $\xi \in \mcK$. It thus makes sense to form the (algebraic) balanced tensor product $\mathscr{L}_B(L^2(B), \mcH)\odot_B \mcK$, and we endow it with the semi-inner product uniquely determined by
$$\langle x\otimes_B \xi, x'\otimes_B \xi'\rangle:= \langle \xi, \pi_\mcK(\langle x, x'\rangle_B)\xi'\rangle, \quad x, x'\in \mathscr{L}_B(L^2(B), \mcH), \quad \xi, \xi'\in \mcK,$$
where $\langle x,x'\rangle_B$ is the unique element of $B$ satisfying $\pi_B(\langle x,x'\rangle_B)= x^*x'$. By separation-completion, we obtain the Hilbert space $\mcH\boxtimes_B \mcK$. It carries a natural $A$-$C$-correspondence structure, given by
$$\pi_\boxtimes(a)\rho_\boxtimes(c) (x\otimes_B \xi):= \pi_\mcH(a)x\otimes_B \rho_\mcK(c)\xi, \quad a\in A,\quad c\in C, \quad x\in \mathscr{L}_B(L^2(B), \mcH), \quad \xi \in \mcK.$$
Further, viewing $\mcH\otimes L^2(\G)\in \Corr(A\ovot L^\infty(\G), B \ovot L^\infty(\G))$ and $\mcK\otimes L^2(\G)\in \Corr(B \ovot L^\infty(\G), C\ovot L^\infty(\G))$ in the obvious way, we have $$\mathscr{L}_{B\ovot L^\infty(\G)}(L^2(B\ovot L^\infty(\G)), \mcH\otimes L^2(\G))= \mathscr{L}_{B\ovot L^\infty(\G)}(L^2(B)\otimes L^2(\G), \mcH\otimes L^2(\G))= \mathscr{L}_B(L^2(B),\mcH)\ovot L^\infty(\G),$$
together with the natural identification
\begin{equation}\label{naturalunitary}(\mcH\boxtimes_B \mcK)\otimes L^2(\G)\to ( \mcH\otimes L^2(\G))\boxtimes_{B\ovot L^\infty(\G)} (\mcK\otimes L^2(\G)): (x\otimes_B \xi)\otimes \eta\mapsto (x\otimes 1)\otimes_{B\ovot L^\infty(\G)} (\xi \otimes \eta)\end{equation}
of $A\ovot L^\infty(\G)$-$C \ovot L^\infty(\G)$-correspondences. The unitary 
$$U_\boxtimes: (\mcH\boxtimes_B \mcK)\otimes L^2(\G) \to (\mcH\otimes L^2(\G))\boxtimes_{B\ovot L^\infty(\G)} (\mcK\otimes L^2(\G)) \cong  (\mcH\boxtimes_B \mcK)\otimes L^2(\G)$$
uniquely determined by
$$U_\boxtimes((x\otimes_B\xi)\otimes \eta)= U_\mcH(x\otimes 1)U_\beta^* \otimes_{B\ovot L^\infty(\G)} U_\mcK(\xi \otimes \eta), \quad x\in \mathscr{L}_B(L^2(B), \mcH), \quad \xi \in \mcK, \quad \eta \in L^2(\G),$$
endows the Hilbert space $\mcH\boxtimes_B \mcK$ with a $\G$-representation such that $\mcH\boxtimes_B \mcK := (\mcH\boxtimes_B \mcK, \pi_\boxtimes, \rho_\boxtimes, U_\boxtimes)\in \Corr^\G(A,C)$. We refer to this operation as the \emph{Connes fusion tensor product} of equivariant correspondences. The Connes fusion tensor product is associative in a natural way. 

There is also a natural operation $\Corr^\G(A,B)\to \Corr^\G(B,A)$ \cite{DCDR24}*{Section 5.1}. More precisely, given $\mcH = (\mcH, \pi, \rho, U)\in \Corr^\G(A,B)$, consider the canonical anti-unitary $C_\mcH: \mcH\to \overline{\mcH}$. Then $(\overline{\mcH}, \overline{\pi}, \overline{\rho}, \overline{U})\in \Corr^\G(B,A)$, where 
$$\overline{\pi}(b) = C_\mcH\rho(b^*)C_\mcH^*, \quad \overline{\rho}(a)= C_\mcH \pi(a^*)C_\mcH^*, \quad \overline{U}= (C_\mcH \otimes J_{\hat{\G}})U^*(C_\mcH^*\otimes J_{\hat{\G}}), \quad a\in A, \quad b\in B.$$
We call $\overline{\mcH}= (\overline{\mcH}, \overline{\pi}, \overline{\rho}, \overline{U})$ the conjugate $\G$-$B$-$A$-correspondence associated to $\mcH$.

In \cite{DCDR24}*{Section 3}, the notion of \emph{weak containment for equivariant correspondences} is introduced, simultaneously generalizing the notions of weak containment for von Neumann correspondences and for locally compact quantum group representations. If $\mcH, \mcG \in \Corr^\G(A,B)$ and $\mcH$ is weakly contained in $\mcG$, we write $\mcH\preccurlyeq \mcG$.

A (unital) inclusion $A\subseteq B$ of $\G$-$W^*$-algebras is called (cf.\ \cite{DCDR24}*{Definition 4.1})
\begin{itemize}
    \item \emph{$\G$-amenable} if there exists a $\G$-equivariant conditional expectation $E: B\to A$, and 
    \item\vspace{-1.5mm} \emph{strongly $\G$-amenable} if $L^2(A)\preccurlyeq L^2(B)$ as $\G$-$A$-$A$-correspondences.
\end{itemize} 
As the terminology suggests, strong $\G$-amenability of the $\G$-equivariant inclusion $A\subseteq B$ implies its $\G$-amenability \cite{DCDR24}*{Theorem 4.3}. The converse of this is unknown to be true, as is seen by considering the inclusion $\C\subseteq L^\infty(\G)$. In that case, one recovers the longstanding open problem if the notions of amenability and strong amenability for a locally compact quantum group $\G$ coincide. However, $\G$-amenability of the inclusion $A\subseteq B$ turns out to be equivalent with strong $\G$-amenability of the inclusion $A\subseteq B$ if $\G$ is compact/discrete and $B$ is $\sigma$-finite \cite{DCDR25}*{Theorem 3.1 \& Theorem 3.5}.

We call $(A, \alpha)$ \emph{(strongly) $\G$-injective} \cite{DCDR24}*{Definition 6.6}\footnote{In \cite{DCDR24}, the terminology (strong) $\G$-$W^*$-amenability was used instead. In the meantime, the connection with $\G$-equivariant injectivity has been completely clarified. Therefore, we prefer to use other terminology instead.} if the inclusion $\pi_A(A) \ovot \C1 \subseteq B(L^2(A))\ovot L^\infty(\G)$ is (strongly) $\G$-amenable, where the von Neumann algebra $B(L^2(A)) \ovot L^\infty(\G)$ carries the $\G$-action
$$B(L^2(A))\ovot L^\infty(\G)\to B(L^2(A))\ovot L^\infty(\G)\ovot L^\infty(\G): z \mapsto U_{\alpha,13}V_{\G,23}z_{12}V_{\G,23}^* U_{\alpha,13}^*.$$ Equivalently, $(A, \alpha)$ is strongly $\G$-injective if and only if $L^2(A)\preccurlyeq C_A^\G$ as $\G$-$A$-$A$-correspondences. In \cite{DR26}*{Proposition 3.13}, it was shown that $\G$-equivariant injectivity of $(A, \alpha)$ (in the above sense) is equivalent with the usual notion of $\G$-injectivity of the $\G$-$W^*$-algebra $(A, \alpha)$, defined as an injective object in an appropriate category of equivariant spaces \cites{Cr17, DH24, DR26}.

The following result generalizes the fact that $\G$ is strongly amenable if and only if the regular $\G$-representation weakly contains every $\G$-representation. In the proof, we will write $\Rep(M)$ for the $W^*$-category of unital normal $*$-representations of the von Neumann algebra $M$ on Hilbert spaces. 

\begin{Prop}\label{charstronginj}
    Let $\G$ be a locally compact quantum group and let $A$ be a $\G$-$W^*$-algebra. Then $A$ is strongly $\G$-injective if and only if $\mcH \preccurlyeq C_A^\G$ for every $\mcH \in \Corr^\G(A,A)$.
\end{Prop}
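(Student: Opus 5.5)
The \emph{if} direction is immediate: the trivial correspondence $L^2(A)=(L^2(A),\pi_A,\rho_A,U_\alpha)$ belongs to $\Corr^\G(A,A)$, so $L^2(A)\preccurlyeq C_A^\G$. By the equivalent formulation of strong $\G$-injectivity recalled just before the statement, $A$ is then strongly $\G$-injective.

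For the \emph{only if} direction, assume $L^2(A)\preccurlyeq C_A^\G$ and fix $\mcH\in\Corr^\G(A,A)$. The plan has two steps.

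\textbf{Step 1.} Since $L^2(A)$ is the unit for $\boxtimes_A$, we have $\mcH\cong L^2(A)\boxtimes_A\mcH$. I would invoke the compatibility of weak containment with Connes fusion from \cite{DCDR24}*{Section 5}: if $\mcK\preccurlyeq\mcK'$, then $\mcK\boxtimes_A\mcH\preccurlyeq\mcK'\boxtimes_A\mcH$. This gives
$$\mcH\cong L^2(A)\boxtimes_A\mcH\preccurlyeq C_A^\G\boxtimes_A\mcH.$$
If that compatibility is not available in exactly this form, I would prove it directly from the definition of weak containment. The idea is to approximate the coefficient functionals $\langle x\otimes_A\xi,\,\cdot\,x'\otimes_A\xi'\rangle$ using the normal $A$-valued inner products $\langle x,x'\rangle_A$.

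\textbf{Step 2.} It then suffices to show $C_A^\G\boxtimes_A\mcH\preccurlyeq C_A^\G$; in fact I expect to show that $C_A^\G\boxtimes_A\mcH$ is a subcorrespondence of an amplification $C_A^\G\otimes\ell^2(I)$. Since the right $A$-action on $C_A^\G$ lives on the last two legs, fusing over $A$ should replace the final leg $L^2(A)$ by $\mcH$. I would first identify
$$C_A^\G\boxtimes_A\mcH\cong L^2(A)\otimes L^2(\G)\otimes\mcH,$$
with left action $(\pi_A\otimes\id)\alpha(a)\otimes 1$, right action on the last two legs determined by $\rho_\mcH$ and $\rho_\G$ (twisted through $\alpha$), and $\G$-representation $V_{\G,24}U_{\mcH,25}$, suitably ordered.

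Next comes a Fell-absorption type untwisting: conjugating by a unitary on $L^2(\G)\otimes\mcH$ built from $U_\mcH$ (of the form $U_{\mcH,32}$ or its adjoint) should turn the representation into $V_{\G,24}\otimes 1$. The same conjugation should turn the right action into one that only uses $\mcH$ as a plain normal anti-representation $\rho_\mcH$ of $A$, i.e.\ as an object of $\Rep(A^{\op})$. In this way $C_A^\G\boxtimes_A\mcH$ depends only on the underlying normal representation $\rho_\mcH$, not on $\pi_\mcH$ or $U_\mcH$.

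Now every normal (anti-)representation of $A$ is a subrepresentation of an amplification $L^2(A)\otimes\ell^2(I)$ in $\Rep(A^{\op})$. Feeding this into the description above realises $C_A^\G\boxtimes_A\mcH$ as a subcorrespondence of $C_A^\G\otimes\ell^2(I)$, which is weakly contained in $C_A^\G$. Combining with Step 1 and transitivity of $\preccurlyeq$ gives $\mcH\preccurlyeq C_A^\G$.

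The main obstacle is the untwisting in Step 2: writing down the precise unitary, and checking that it intertwines the left $A$-actions and the right $A$-actions (using the covariance $(\rho_\mcH\otimes R_\G)\alpha(a)=U_\mcH^*(\rho_\mcH(a)\otimes1)U_\mcH$ and the identities relating $V_\G$, $W_\G$, $J_{\hat\G}$), while converting $V_{\G,24}U_{\mcH,25}$ into $V_{\G,24}$. I would first try the unitary $U_{\mcH,32}^*$ (or a variant conjugated by $u_\G$) together with the multiplicativity of $V_\G$, mimicking the classical proof that $\lambda\otimes\pi\cong\lambda\otimes 1$.
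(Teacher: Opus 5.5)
Your proposal is correct, but the key step differs from the paper's: you fuse on the other side, and you replace a cited crossed-product argument with an explicit untwisting.

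\textbf{How the paper argues.} It fuses on the left: $\mcH\cong\mcH\boxtimes_A L^2(A)\preccurlyeq\mcH\boxtimes_A C_A^\G\cong\mcH\boxtimes_\C L^2(\G)\boxtimes_\C L^2(A)$. It then does no untwisting by hand. Instead it cites two facts: $\mcH\boxtimes_\C L^2(\G)$ is a normal representation of $A\rtimes\G$, viewed inside $\Corr^\G(A,\C)$; and $L^2(A)\boxtimes_\C L^2(\G)$ generates $\Rep(A\rtimes\G)$. So $\mcH\boxtimes_\C L^2(\G)$ embeds in an amplification of $L^2(A)\boxtimes_\C L^2(\G)$, and fusing again with $L^2(A)$ gives $\mcH\preccurlyeq C_A^\G$. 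The absorption hidden in that citation is the left-handed one, implemented by $U_\mcH$ itself: $U_{\mcH,12}U_{\mcH,13}V_{\G,23}U_{\mcH,12}^*=V_{\G,23}$. No $u_\G$ is needed.

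\textbf{Your untwisting works.} Fusing on the right forces the twisted unitary, and your guess is the right one. Put $T:=(u_\G)_2U_{\mcH,32}^*(u_\G)_2\in L^\infty(\G)'\ovot B(\mcH)$ on legs $2,3$.
\begin{itemize}
\item From $(u_\G\otimes 1)V_\G(u_\G\otimes 1)=W_{\G,21}$ one gets $V_\G^*(u_\G x u_\G\otimes 1)V_\G=(u_\G\otimes 1)\Delta_\G^{\op}(x)(u_\G\otimes 1)$.
\item This yields $T_{23}V_{\G,24}U_{\mcH,34}T_{23}^*=V_{\G,24}$.
\item $T_{23}$ commutes with the left action.
\item Since $u_\G R_\G(x)u_\G=J_\G x^*J_\G=\rho_\G(x)$, $T_{23}$ carries $1\otimes 1\otimes\rho_\mcH(a)$ to $1\otimes(\rho_\G\otimes\rho_\mcH)\alpha^{\op}(a)$.
\end{itemize}
The result is the coarse correspondence with $\rho_A$ replaced by $\rho_\mcH$, as you predicted.

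\textbf{What each route buys.} Yours is self-contained: it uses only the elementary fact that a normal anti-representation of $A$ sits in an amplification of $\rho_A$. It also yields an actual containment $C_A^\G\boxtimes_A\mcH\subseteq C_A^\G\otimes\ell^2(I)$. The paper's route is shorter, given the cited results on $\Rep(A\rtimes\G)$.

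\textbf{One correction when writing it out.} Your intermediate description of $C_A^\G\boxtimes_A\mcH$ mixes two models. In the form \eqref{coarse} the right action is twisted across legs $2,3$, so fusing over $A$ does not simply replace the last leg. The fix:
\begin{itemize}
\item First pass to $L^2(A)\boxtimes_\C L^2(\G)\boxtimes_\C L^2(A)$, via $U_{\alpha,12}$ and your $T$ for $\mcH=L^2(A)$; this is \cite{DCDR24}*{Remark 2.8}. There the right action is $1\otimes 1\otimes\rho_A$ and the representation is $U_{\alpha,14}V_{\G,24}U_{\alpha,34}$.
\item Only then fuse. This gives the untwisted right action $1\otimes1\otimes\rho_\mcH$ together with the representation $U_{\alpha,14}V_{\G,24}U_{\mcH,34}$.
\item The twisted right action $1\otimes(\rho_\G\otimes\rho_\mcH)\alpha^{\op}$ belongs to the model where the representation is already $V_{\G,24}$, not to one still containing $U_\mcH$.
\end{itemize}

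Finally, you use compatibility of Connes fusion with weak containment in the first variable; the paper uses the same compatibility (\cite{DCDR24}*{Proposition 6.3}) in the second variable.
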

\begin{proof} 

From \cite{DR25b}*{Lemma 3.8}, we may view $\Rep(A\rtimes \G) \subseteq \Corr^\G(A,\C)$. From \cite{DCDR24}*{Remark 2.8}, we have that $C_A^\G \cong L^2(A)\boxtimes_\C L^2(\G)\boxtimes_\C L^2(A)$ as $\G$-$A$-$A$-correspondences. 

Assume that $L^2(A)\preccurlyeq C_{A}^\G$ as $\G$-$A$-$A$-correspondences. Given $\mcH \in \Corr^\G(A,A)$, we have $$\mcH \cong \mcH \boxtimes_A L^2(A) \preccurlyeq \mcH\boxtimes_A L^2(A) \boxtimes_\C L^2(\G)\boxtimes_\C L^2(A)\cong \mcH\boxtimes_\C L^2(\G) \boxtimes_\C L^2(A)$$ as $\G$-$A$-$A$-correspondences, where we used that the Connes fusion tensor product preserves equivariant weak containment \cite{DCDR24}*{Proposition 6.3}. It follows from \cite{DR25c}*{Lemma 3.1} that $\mcH \boxtimes_\C L^2(\G)\in \Rep(A\rtimes \G)$. The $W^*$-category $\Rep(A\rtimes \G)$ has the generator $L^2(A)\boxtimes_\C L^2(\G)\in \Rep(A\rtimes \G)\subseteq \Corr^\G(A,\C)$. Consequently, there exists an index set $I$ such that 
    $\mcH\boxtimes_\C L^2(\G) \subseteq \bigoplus_{i\in I} L^2(A)\boxtimes_\C L^2(\G)$
    as $\G$-$A$-$\C$-correspondences. In particular, it follows from \cite{DCDR24}*{Remark 3.2} that $\mcH \boxtimes_\C L^2(\G)\preccurlyeq L^2(A)\boxtimes_\C L^2(\G)$ as $\G$-$A$-$\C$-correspondences. Consequently,
    $$\mcH \preccurlyeq \mcH \boxtimes_\C L^2(\G)\boxtimes_\C L^2(A)\preccurlyeq L^2(A)\boxtimes_\C L^2(\G)\boxtimes_\C L^2(A)\cong C_A^\G$$
    as $\G$-$A$-$A$-correspondences, finishing the proof.
\end{proof}
\subsection{Compact quantum groups} A locally compact quantum group $\G$ is called \emph{compact} if the $C^*$-algebra $C_0(\G)$ is unital. In that case, we denote it simply by $C(\G)$. For the basic theory of compact quantum groups, we refer the reader to \cite{NT14}*{Chapter 1}. We collect here some notations, conventions and useful facts. Let $\G$ be a compact quantum group with (normal, faithful) Haar state $\varphi_\G: L^\infty(\G)\to \C$ so that
$$(\varphi_\G\otimes \id)\Delta_\G(x)= (\id \otimes \varphi_\G)\Delta_\G(x)= \varphi_\G(x)1, \quad x\in L^\infty(\G).$$
We call $\G$ of \emph{Kac type} if $\varphi_\G$ is tracial. We write $\Rep(\G)$ for the $W^*$-category of finite-dimensional unitary $\G$-representations. Its objects are pairs $\pi= (\mcH_\pi, U_\pi)$ where $\mcH_\pi$ is a finite-dimensional Hilbert space and $U_\pi\in B(\mcH_\pi)\odot C(\G)$ is a unitary satisfying $(\id \otimes \Delta_\G)(U_\pi)= U_{\pi,12}U_{\pi,13}$. We write $d_\pi:= \dim(\mcH_\pi)$.
If $\pi_1, \pi_2 \in \Rep(\G)$, we write $\pi_1 \oplus \pi_2\in \Rep(\G)$ for their direct sum (on the Hilbert space $\mcH_{\pi_1}\oplus \mcH_{\pi_2}$) and $\pi_1 \otimes \pi_2= (\mcH_{\pi_1}\otimes \mcH_{\pi_2}, U_{\pi_1,13} U_{\pi_2,23})\in \Rep(\G)$ for their tensor product.
Given $\pi \in \Rep(\G)$ and $\xi, \eta \in \mcH_\pi$, write
$U_\pi(\xi, \eta):= (\omega_{\xi, \eta}\otimes \id)(U_\pi)\in C(\G)$. The linear subspace of $C(\G)$ generated by the matrix coefficients $\{U_\pi(\xi,\eta): \pi \in \Rep(\G), \xi, \eta \in \mcH_\pi\}$, will be denoted by $\mathcal{O}(\G)$.  We have an associated Hopf $^*$-algebraic coaction
$$\delta_\pi: \mcH_\pi\to \mcH_\pi \odot \mcO(\G): \xi \mapsto U_\pi(\xi \otimes 
1).$$ The space  $\mathcal{O}(\G)$ is a norm-dense unital $*$-subalgebra of $C(\G)$. If $\pi\in \Rep(\G)$ and if $\{e_j^\pi\}_{j=1}^{d_\pi}$ is an orthonormal basis for $\mH_\pi$, then
\begin{equation}\label{comultiplication action}
    \Delta_\G(U_\pi(\xi, \eta)) = \sum_{j=1}^{d_\pi} U_\pi(\xi, e_j^\pi)\otimes U_\pi(e_j^\pi, \eta), \quad \xi, \eta \in \mH_\pi.
\end{equation}
In particular, $\Delta_\G(\mathcal{O}(\G))\subseteq \mathcal{O}(\G)\odot \mathcal{O}(\G)$. The pair $(\mathcal{O}(\G), \Delta_\G)$ has the structure of a Hopf $*$-algebra. Concretely, the counit $\epsilon_\G: \mathcal{O}(\G)\to \mathbb{C}$ and the antipode $S_\G: \mathcal{O}(\G)\to \mathcal{O}(\G)$ satisfy
\begin{equation}\label{counit and antipode}\epsilon_\G(U_\pi(\xi, \eta))= \langle \xi, \eta\rangle, \quad S_\G(U_\pi(\xi, \eta)) = U_\pi(\eta, \xi)^*, \quad  \pi \in \Rep(\G), \quad \xi, \eta \in \mH_\pi.\end{equation}
We fix a maximal collection $\Irr(\G)$ of irreducible $\G$-representations $\{U_\pi\in B(\mathcal{H}_\pi)\odot \mathcal{O}(\G)\}_{\pi\in \Irr(\G)}$ that are pairwise non-isomorphic. Then 
$\{U_\pi(e_i^\pi, e_j^\pi): \pi\in \Irr(\G), 1 \le i,j \le d_\pi\}$
forms a Hamel basis for $\mcO(\G)$. 

Let us also write $\mathscr{U}_\G$ for the algebraic linear dual of $\mcO(\G)$. We then have the isomorphism
\begin{equation}\label{identification}
    \mathscr{U}_\G\cong \prod_{\pi \in \Irr(\G)} B(\mcH_\pi): \omega \mapsto ((\id \odot \omega)(U_\pi))_{\pi \in \Irr(\G)}
\end{equation}
of $*$-algebras (here $\prod$ denotes the algebraic direct product), where $\mathscr{U}_\G$ becomes a $*$-algebra for the product and involution given by 
$$(\omega \star \omega')(z)= (\omega \odot \omega')\Delta_{\G}(z), \quad \omega^*(z)= \overline{\omega(S_{\G}(z)^*)}, \quad \omega, \omega'\in \mathscr{U}_{\G}, \quad z \in \mcO(\G).$$
The $*$-subalgebra of $\mathscr{U}_\G$ corresponding to the algebraic direct sum $\bigoplus_{\pi\in \Irr(\G)}^{\operatorname{alg}} B(\mcH_\pi)$ will be denoted by $\mathcal{U}_\G$. We then have $\mathcal{U}_\G=\{\varphi_\G(a-): a \in \mcO(\G)\}$, and we can naturally view $\mathcal{U}_\G\subseteq L^1(\G)$ as a norm-dense subspace.

With respect to the Haar state $\varphi_\G$ on $L^\infty(\G)$, we can consider the modular one-parameter group $$\{\sigma_t^\G:= \sigma_t^{\varphi_\G}: L^\infty(\G)\to L^\infty(\G)\}_{t\in \mathbb{R}}.$$ The elements of $\mathcal{O}(\G)$ are analytic for $\{\sigma_t^\G\}_{t\in \mathbb{R}}$ and $\sigma_z^\G(\mathcal{O}(\G)) \subseteq \mathcal{O}(\G)$ for all $z\in \C$. This leads to the \emph{Woronowicz characters} 
\begin{equation}\label{DefWorChar}
\delta^z_\G := \varepsilon_\G \circ \sigma_{iz}^\G \in \msU_\G.
\end{equation}
We have the following formulas for the modular automorphism group and \emph{scaling group} of $\mathcal{O}(\G)$:
\begin{equation}\label{EqFormSigmaScaling}
\sigma_{z}^\G(x) =  \delta^{-iz/2}_\G(x_{(1)})x_{(2)}\delta^{-iz/2}_\G(x_{(3)}),\qquad \tau_{z}^\G(x) =  \delta^{-iz/2}_\G(x_{(1)})x_{(2)}\delta^{iz/2}_\G(x_{(3)})\qquad x\in \mathcal{O}(\G),
\end{equation}
where the (sumless)  Sweedler notation was used. Also, with $R_\G: L^\infty(\G)\to L^\infty(\G)$ the unitary antipode, we have $R_\G(\mathcal{O}(\G))= \mathcal{O}(\G)$ and  
\begin{equation}
S_\G = \tau_{-i/2}^\G\circ R_\G = R_\G\circ \tau_{-i/2}^\G,\qquad S_\G^2 = \tau_{-i}^\G. 
\end{equation}

We will often view 
$\delta_\G\in  B(\mH_\pi)$ for $\pi\in \Irr(\G)$ using the identification \eqref{identification}.  If we wish to emphasize the dependency on $\pi$, we will sometimes also write $\delta_\G^\pi$. We have $\operatorname{Tr}((\delta_\G^{\pi})^{1/2})= \operatorname{Tr}((\delta_\G^{\pi})^{-1/2})$ and we call this common value the \emph{quantum dimension of $\pi$} and we denote it by $\dim_q(\pi)$. 

Given $\pi\in \Rep(\G)$, define $\bar{\pi}= (\overline{\mcH_\pi}, \overline{U_\pi})\in \Rep(\G)$ by
$\overline{U_\pi}:= (j_{\mcH_\pi}\odot R_\G)(U_\pi) \in B(\overline{\mcH_\pi})\odot \mcO(\G),$ where $j_{\mcH_\pi}: B(\mcH_\pi)\to B(\overline{\mcH_\pi})$ is given by $j_{\mcH_\pi}(x)\bar{\xi}= \overline{x^*\xi}$ for $x\in B(\mcH_\pi)$ and $\xi \in \mcH_\pi$. We have the following useful formulas, for every $\pi \in \Rep(\G)$ and $\xi, \eta \in \mcH_\pi$:
\begin{align}
    U_\pi(\xi, \eta)^* &= U_{\bar{\pi}}(\overline{\delta^{1/4}_\G\xi}, \overline{\delta^{-1/4}_\G\eta})= U_{\bar{\pi}}(\delta^{-1/4}_\G \overline{\xi}, \delta^{1/4}_\G \overline{\eta}),\\
    \label{aa}R_\G(U_\pi(\xi, \eta))&= U_{\bar{\pi}}(\overline{\eta}, \overline{\xi})= U_\pi(\delta_\G^{-1/4}\eta, \delta_\G^{1/4}\xi)^*,\\
\label{bb}\sigma_z^\G(U_{\pi}(\xi,\eta)) &= U_{\pi}(\delta_\G^{i\overline{z}/2}\xi,\delta^{-iz/2}_\G\eta),\\
\label{cc}\tau_z^\G(U_{\pi}(\xi,\eta)) &= U_{\pi}(\delta_\G^{i\overline{z}/2}\xi,\delta_\G^{iz/2}\eta).
\end{align}

The \emph{orthogonality relations} for irreducible $\G$-representations $\pi,\pi'\in \Irr(\G)$ are given for $\xi, \eta \in \mcH_\pi$ and $\xi', \eta'\in \mcH_{\pi'}$ by:
\begin{align}\label{EqPW1}
\varphi_\G(U_{\pi}(\xi,\eta)U_{\pi'}(\xi',\eta')^*) &= \delta_{\pi,\pi'} \dim_q(\pi)^{-1}\langle\xi, \xi'\rangle \langle \eta',\delta_\G^{-1/2}\eta\rangle\\
\label{EqPW2}
\varphi_\G(U_{\pi}(\xi,\eta)^*U_{\pi'}(\xi',\eta')) &=\delta_{\pi,\pi'} \dim_q(\pi)^{-1} \langle \xi',\delta_\G^{1/2}\xi\rangle \langle \eta, \eta'\rangle
\end{align}

\section{Construction of an algebraic compact quantum hypergroup}\label{matrixcoefficients}

\subsection{Ergodic compact quantum group actions} In this subsection, we recall some theory of ergodic compact quantum group actions, following \cites{Boc95, BDRV06, DC17}.

Fix a compact quantum group $\G$ and suppose that an ergodic action $L^\infty(\X)\stackrel{\alpha}\curvearrowleft \G$ is given. 

If $\pi\in \Rep(\G)$, we define 
$\Mor(\pi, \X)$ to be the space of linear maps $T: \mcH_\pi\to L^\infty(\X)$ satisfying $(T \odot \id)\delta_\pi(\xi) = \alpha (T(\xi))$ for all $\xi \in \mcH_\pi$, and we define the $\pi$-spectral subspace
$$\mcO(\X)_\pi:= \operatorname{span}\{T\xi: T \in \Mor(\pi, \X), \xi \in \mcH_\pi\}.$$
We then put $\mcO(\X):= \bigoplus_{\pi\in \Irr(\G)} \mcO(\X)_\pi$ and we call $\mcO(\X)$ the \emph{algebraic core} of the action. It is a $\sigma$-weakly dense unital $*$-subalgebra of $L^\infty(\X)$. Its norm-closure inside $L^\infty(\X)$ will be denoted by $C(\X)$. We then have $\alpha(\mcO(\X))\subseteq \mcO(\X) \odot \mcO(\G)$ and $\alpha(C(\X))\subseteq C(\X) \otimes C(\G)$, so that $\alpha$ defines a coaction of the Hopf $*$-algebra $\mcO(\G)$ on $\mcO(\X)$ and so that $C(\X)$ becomes a $\G$-$C^*$-algebra. If $x\in \mcO(\X)$, we will sometimes use the sumless Sweedler notation $\alpha(x)= x_{(0)}\otimes x_{(1)}$.

Given $\pi\in \Rep(\G)$, we define the space
$$\mcG_\pi:= \{\mu \in \mcH_\pi \odot \mcO(\X): (\id \otimes \alpha)(\mu)= U_{\pi,13}^*\mu_{12}\}.$$
In other words, if $\{e_j^\pi\}_{j=1}^{d_\pi}$ is an orthonormal basis for $\mcH_\pi$ and $x_1, \dots, x_{d_\pi}\in \mcO(\X)$, then
\begin{equation}\label{coordinates}\sum_{j=1}^{d_\pi} e_j^\pi\otimes x_j \in \mcG_\pi \iff \forall j \in \{1, \dots, d_\pi\}: \alpha(x_j) = \sum_{k=1}^{d_\pi} x_k\otimes U_\pi(e_k^\pi, e_j^\pi)^*.\end{equation}
It is clear that $\mcG_\pi\subseteq \mcH_\pi \odot \mcO(\X)_\pi^*= \mcH_\pi \odot \mcO(\X)_{\bar{\pi}}$.
We emphasize that this space may very well be zero for certain choices of (irreducible) representations (see Section \ref{fusion algebra} for a concrete example).

The vector space $\mcH_\pi \odot \mcO(\X)$ carries the $\mcO(\X)$-valued inner product given on elementary tensors by
$$\langle \xi \otimes x, \eta \otimes y\rangle := \langle \xi, \eta\rangle x^* y, \quad \xi, \eta \in \mcH_\pi, \quad x,y \in \mcO(\X).$$
If $\mu, \nu \in \mcG_\pi$, then $\langle \mu, \nu\rangle \in L^\infty(\X)^\alpha = \C1$, so that $\mcG_\pi$ carries a canonical complex-valued inner product. If $\mu \in \mcG_\pi$ and $\xi \in \mcH_\pi$, we define
$X_\pi(\mu, \xi):= \langle \mu, \xi \otimes 1\rangle \in \mcO(\X).$ Then
\begin{align}\label{actionmatrixcoeff}
    \alpha(X_\pi(\mu, \xi)) = \sum_{j=1}^{d_\pi} X_\pi(\mu, e_j^\pi)\otimes U_\pi(e_j^\pi, \xi).
\end{align}
From this, it follows that
$X_\pi(\mu, -)\in \Mor(\pi, \X)$.
It is then easy to verify that the assignment
$$\mcG_\pi \ni \mu\mapsto X_\pi(\mu,-)\in \Mor(\pi, \X)$$
defines an anti-linear bijection, with inverse given by $T \mapsto \sum_{j=1}^{d_\pi} e_j^\pi \otimes T(e_j^\pi)^*$. By \cite{Boc95}*{Theorem 17}, $\dim(\mcO(\X)_\pi)< \infty$ for all $\pi \in \Rep(\G)$. Thus, $\mcG_\pi$ is a finite-dimensional Hilbert space for every $\pi\in \Rep(\G)$. In the sequel, we will write $m_\pi:= \dim(\mcG_\pi)$ for $\pi\in \Rep(\G)$ and we will fix an orthonormal basis $\{f_j^\pi\}_{j=1}^{m_\pi}$ for $\mcG_\pi$. Then $\{X_\pi(f_j^\pi, e_k^\pi): \pi\in \Irr(\G), 1\le j \le m_\pi, 1 \le k\le  d_\pi\}$ forms a Hamel basis for $\mcO(\X)$.

The following example shows that we have introduced a bona fide generalization for the matrix coefficients of $\G$:

\begin{Exa}\label{exmatrixcoeff}
    Consider the action $ L^\infty(\G)\stackrel{\Delta_\G}\curvearrowleft \G$ and fix $\pi \in \Rep(\G)$. It is easy to verify that the maps
    \begin{equation}\label{unitary}
        \mcH_\pi\to \mcG_\pi: \xi \mapsto \mu_\xi:= \sum_{j=1}^{d_\pi} e_j^\pi\otimes U_\pi(\xi, e_j^\pi)^*, \quad \mcG_\pi\to \mcH_\pi: \mu \mapsto \xi_\mu:= (\id \odot \epsilon_\G)(\mu)
    \end{equation}
    are unitaries that are inverse to each other. Given $\xi, \eta \in \mcH_\pi$, we have
    $X_\pi(\mu_\xi, \eta)=U_\pi(\xi, \eta).$
\end{Exa}
Returning back to the general theory, there is a unique (normal, faithful) state $\varphi_\X: L^\infty(\X)\to \C$ such that $(\id \otimes \varphi_\G)\alpha(x)= \varphi_\X(x)1$ for all $x\in L^\infty(\X)$. Then $\varphi_\X$ is KMS, in the sense that there exists a  unique algebra automorphism $\sigma_\X: \mcO(\X)\to \mcO(\X)$ satisfying
\begin{equation}\label{KMS}\varphi_\X(ab)= \varphi_\X(b\sigma_\X(a)), \quad a,b\in \mcO(\X).\end{equation}
Given $\pi \in \Irr(\G)$, there exists a unique invertible, positive operator $\delta_\X\in B(\mcG_\pi)$ such that 
\begin{equation}\label{determines uniquely}
    \sigma_\X(X_\pi(\mu, \xi))= X_\pi(\delta_\X^{-1/2}\mu, \delta_\G^{-1/2}\xi), \quad \mu \in \mcG_\pi, \quad \xi \in \mcH_\pi.
\end{equation}
If we want to emphasize the representation $\pi$, we will sometimes also write $\delta_\X^\pi\in B(\mcG_\pi)$. The existence of these operators allows us to define for every $z\in \C$, the map
\begin{equation}\label{modularanalytic}
    \sigma_z^\X: \mcO(\X)\to \mcO(\X), \quad  \sigma_z^\X(X_\pi(\mu, \xi))= X_\pi(\delta_\X^{i\bar{z}/2}\mu, \delta_\G^{-iz/2}\xi),\quad \pi \in \Irr(\G), \quad \mu \in \mcG_\pi, \quad \xi \in \mcH_\pi.
\end{equation}
These maps satisfy
$$\sigma_{\X}= \sigma_{-i}^\X, \quad \sigma_z^\X(ab)= \sigma_z^\X(a)\sigma_z^\X(b), \quad \sigma_z^\X(a^*)= \sigma_{\bar{z}}^\X(a)^*, \quad z\in \C, \quad a,b\in \mcO(\X).$$
Moreover, $\varphi_\X(\sigma_z^\X(a))=\varphi_\X(a)$ for all $a\in \mcO(\X)$. Therefore, the $*$-automorphisms
$\{\sigma_t^\X: \mcO(\X)\to \mcO(\X)\}_{t\in \R}$ extend to (normal) $*$-automorphisms
$\{\sigma_t^\X: L^\infty(\X)\to L^\infty(\X)\}_{t\in \R}.$
It can then be shown that these $*$-automorphisms coincide with the modular group on $L^\infty(\X)$ associated to the faithful normal state $\varphi_\X$.

The following orthogonality relations hold for $\pi, \pi' \in \Irr(\G)$,  $\mu\in \mcG_\pi, \mu'\in \mcG_{\pi'}, \xi \in \mcH_\pi$ and $\xi' \in \mcH_{\pi'}$:
    \begin{align}\label{orthogonality1}\varphi_{\X}(X_\pi(\mu, \xi)X_{\pi'}(\mu', \xi')^*)&= \delta_{\pi, \pi'}\dim_q(\pi)^{-1}\langle \xi', \delta_\G^{-1/2} \xi\rangle \langle \mu, \mu'\rangle,\\\label{orthogonality2}\varphi_\mathcal{\X}(X_\pi(\mu, \xi)^*X_{\pi'}(\mu', \xi')) &= \delta_{\pi, \pi'} \dim_q(\pi)^{-1}\langle\mu', \delta_\X^{1/2}\mu\rangle\langle\xi, \xi'\rangle. \end{align}
    To prove \eqref{orthogonality1}, we simply use that $\varphi_\X= (\varphi_\G\otimes \id)\circ \alpha$, the formula \eqref{actionmatrixcoeff} and the orthogonality relation \eqref{EqPW1}. The equation \eqref{orthogonality2} then follows from \eqref{orthogonality1} and the KMS-condition \eqref{KMS}.

It is also possible to define the operators $\delta_\X^\pi\in B(\mcH_\pi)$ for non-irreducible $\pi\in \Rep(\G)$. Indeed, choose $\pi_1, \dots, \pi_n \in \Irr(\G)$ (repetition allowed) and intertwining isometries $w_j: \mcH_{\pi_j}\to \mcH_\pi$ satisfying $\sum_{j=1}^n w_j w_j^*= 1$. The vector space isomorphism
\begin{align*}
    \mcH_\pi\odot \mcO(\X)\to \bigoplus_{j=1}^n (\mcH_{\pi_j}\odot \mcO(\X)): \mu \mapsto ((w_j^*\otimes 1)\mu)_{j=1}^n
\end{align*}
restricts to a unitary $\Phi: \mcG_\pi \to \bigoplus_{j=1}^n \mcG_{\pi_j}$. We then see that
$X_\pi(\mu, \xi) = \sum_{j=1}^n X_{\pi_j}((w_j^*\otimes 1)\mu, w_j^*\xi)$ for all $\mu \in \mcG_\pi$ and all $\xi \in \mcH_\pi$. Consequently, the positive, invertible operator
$\delta_\X^\pi:= \Phi^{-1}\circ (\bigoplus_{j=1}^n \delta_\X^{\pi_j})\circ \Phi\in B(\mcG_\pi)$
satisfies \eqref{determines uniquely}, and this condition also determines it uniquely. Then also formulas such as \eqref{modularanalytic} hold for non-irreducible $\pi\in \Rep(\G)$.

\begin{Prop}\label{1}
    Let $\pi, \pi_1, \pi_2 \in \Rep(\G)$.
    \begin{enumerate}
        \item The canonical vector space isomorphism
    $$(\mathcal{H}_{\pi_1}\oplus\mathcal{H}_{\pi_2})\odot \mcO(\X)\to (\mathcal{H}_{\pi_1}\odot \mcO(\X))\oplus (\mathcal{H}_{\pi_2}\odot \mcO(\X))$$ restricts to a unitary
    $\mathcal{G}_{\pi_1\oplus \pi_2}\cong \mathcal{G}_{\pi_1}\oplus \mathcal{G}_{\pi_2}.$ Under this identification, we have $\delta_\X^{\pi_1\oplus \pi_2}= \delta_\X^{\pi_1}\oplus \delta_{\X}^{\pi_2}$, and
    $$X_{\pi_1\oplus \pi_2}(\mu_1\oplus \mu_2, \xi_1 \oplus \xi_2) = X_{\pi_1}(\mu_1, \xi_1)+ X_{\pi_2}(\mu_2, \xi_2), \quad \mu_i \in \mcG_{\pi_i}, \quad \xi_i \in \mcH_{\pi_i}.$$
    \item The map
    $\mathcal{G}_{\pi_1}\otimes \mathcal{G}_{\pi_2}\hookrightarrow \mathcal{G}_{\pi_1\otimes \pi_2}: \mu_1 \otimes \mu_2 \mapsto (\mu_{2})_{23}(\mu_1)_{13}$ is isometric. Under this embedding, we have that $\delta_{\X}^{\pi_1\otimes \pi_2}\vert_{\mcG_{\pi_1}\otimes \mcG_{\pi_2}}= \delta_\X^{\pi_1}\otimes \delta_\X^{\pi_2}$, and  
    $$X_{\pi_1\otimes \pi_2}(\mu_1\otimes \mu_2, \xi_1 \otimes \xi_2) = X_{\pi_1}(\mu_1, \xi_1)X_{\pi_2}(\mu_2, \xi_2), \quad \quad \mu_i \in \mcG_{\pi_i}, \quad \xi_i \in \mcH_{\pi_i}.$$
    \item There is a canonical unitary
    $ \overline{\mathcal{G}_\pi}\to \mathcal{G}_{\bar{\pi}}: \overline{\mu} \mapsto \sum_{j=1}^{d_\pi}\overline{e_j^\pi}\otimes X_\pi(\delta_\X^{-1/4}\mu, \delta_\G^{1/4}e_j^\pi).$
    Under this identification, we have that $\delta^{\bar{\pi}}_\X \bar{\mu}= \overline{(\delta_\X^{\pi})^{-1} \mu}$ for $\mu \in \mcG_\pi$, and 
    $$X_\pi(\mu, \xi)^* = X_{\bar{\pi}}(\overline{\delta^{1/4}_\X\mu}, \overline{\delta_\G^{-1/4}\xi}), \quad \mu \in \mcG_\pi, \quad \xi \in \mcH_\pi.$$
    \end{enumerate}
\end{Prop}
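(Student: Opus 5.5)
Each part follows the same scheme: check that the proposed map sends $\mcG$-elements to $\mcG$-elements using the defining condition $(\id\otimes\alpha)(\mu)=U_{\pi,13}^*\mu_{12}$, then compute inner products and $X$-coefficients directly, and finally identify $\delta_\X$ through the uniqueness statement \eqref{determines uniquely} by applying $\sigma_\X$ to the resulting coefficient formula.

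For (1), write $U_{\pi_1\oplus\pi_2}=U_{\pi_1}\oplus U_{\pi_2}$. The defining condition splits componentwise, so $\mu_1\oplus\mu_2\in\mcG_{\pi_1\oplus\pi_2}$ if and only if $\mu_i\in\mcG_{\pi_i}$. The $\mcO(\X)$-valued inner product is additive over the direct sum, which gives both unitarity and the formula for $X_{\pi_1\oplus\pi_2}$. Applying $\sigma_\X$ to that formula and using \eqref{determines uniquely} for each summand shows that $\delta_\X^{\pi_1}\oplus\delta_\X^{\pi_2}$ satisfies the defining property, so uniqueness gives $\delta_\X^{\pi_1\oplus\pi_2}=\delta_\X^{\pi_1}\oplus\delta_\X^{\pi_2}$.

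For (2), put $\nu=(\mu_2)_{23}(\mu_1)_{13}\in\mcH_{\pi_1}\otimes\mcH_{\pi_2}\odot\mcO(\X)$. Since $\alpha$ is multiplicative,
\[
(\id\otimes\id\otimes\alpha)(\nu)=U_{\pi_2,24}^*(\mu_2)_{23}\,U_{\pi_1,14}^*(\mu_1)_{13}.
\]
The leg-$4$ parts $U_{\pi_2,24}^*$ and $U_{\pi_1,14}^*$ must be moved to the front past the $\mcO(\X)$-entries in leg $3$; this is legitimate because legs $3$ and $4$ are different tensor factors. The result is $(U_{\pi_1,14}U_{\pi_2,24})^*\nu_{123}$, which is the defining condition for $\pi_1\otimes\pi_2=(\mcH_{\pi_1}\otimes\mcH_{\pi_2},U_{\pi_1,13}U_{\pi_2,23})$. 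Next, $\langle\nu,\nu'\rangle=\langle\mu_1,\langle\mu_2,\mu_2'\rangle\mu_1'\rangle$, and the inner scalar factors out, so the map is isometric. The same computation with $\xi_1\otimes\xi_2\otimes1$ in the second slot gives $X(\mu_1\otimes\mu_2,\xi_1\otimes\xi_2)=X_{\pi_1}(\mu_1,\xi_1)X_{\pi_2}(\mu_2,\xi_2)$. Because $\sigma_\X$ is an algebra automorphism and $\delta_\G^{\pi_1\otimes\pi_2}=\delta_\G^{\pi_1}\otimes\delta_\G^{\pi_2}$ (the Woronowicz character is a character), we get
\[
\sigma_\X\bigl(X(\nu,\xi)\bigr)=X\bigl((\delta_\X^{-1/2}\otimes\delta_\X^{-1/2})\nu,\delta_\G^{-1/2}\xi\bigr).
\]
Since the vectors $X(\nu,\xi)$ with $\nu\in\mcG_{\pi_1\otimes\pi_2}$ determine $\nu$ (by the anti-linear bijection with $\Mor$), this forces $\delta_\X^{\pi_1\otimes\pi_2}$ to restrict to $\delta_\X^{\pi_1}\otimes\delta_\X^{\pi_2}$ on the image.

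For (3), the approach is to verify the adjoint formula first and read the rest off from it. Define $\bar\mu\mapsto\tilde\mu:=\sum_j\overline{e_j}\otimes X_\pi(\delta_\X^{-1/4}\mu,\delta_\G^{1/4}e_j)$. To show $\tilde\mu\in\mcG_{\bar\pi}$, apply \eqref{actionmatrixcoeff} to its coefficients and compare with \eqref{coordinates} for $\bar\pi$. This requires $U_{\bar\pi}(\bar e_k,\bar e_j)^*$, which formula \eqref{aa} together with the adjoint formula expresses as $U_\pi(\delta_\G^{1/4}e_j,\delta_\G^{-1/4}e_k)$ up to conjugation. Tracking the $\delta_\G^{\pm1/4}$ shifts should then match the two sides. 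By definition $X_{\bar\pi}(\tilde\mu,\bar\xi)=\langle\tilde\mu,\bar\xi\otimes1\rangle$, and expanding $\bar\xi$ in the basis gives $X_\pi(\delta_\X^{-1/4}\mu,\delta_\G^{1/4}\xi)^*$. Replacing $\mu$ by $\delta_\X^{1/4}\mu$ and $\xi$ by $\delta_\G^{-1/4}\xi$ yields the stated formula. For the inner product,
\[
\langle\tilde\mu,\tilde\mu'\rangle=\sum_j X_\pi(\delta_\X^{-1/4}\mu,\delta_\G^{1/4}e_j)^*X_\pi(\delta_\X^{-1/4}\mu',\delta_\G^{1/4}e_j)
\]
is a scalar, so it equals its value under $\varphi_\X$. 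Evaluating with \eqref{orthogonality2} gives $\dim_q(\pi)^{-1}\operatorname{Tr}(\delta_\G^{1/2})\langle\mu',\mu\rangle=\langle\bar\mu,\bar\mu'\rangle$, using $\dim_q(\pi)=\operatorname{Tr}(\delta_\G^{1/2})$. So the map is isometric, and it is bijective because the dimensions agree: (3) applied to $\bar\pi$ gives the reverse injection. Finally, $\delta_\X^{\bar\pi}$ is obtained by applying $\sigma_\X$ to the adjoint formula, using $\sigma_\X(a^*)=\sigma_{-i}^\X(a^*)=\sigma_i^\X(a)^*$ and \eqref{modularanalytic}, then invoking uniqueness in \eqref{determines uniquely}; the $\delta_\G^{\bar\pi}$ transforms as the conjugate of $(\delta_\G^\pi)^{-1}$.

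The main obstacle is the bookkeeping of the quarter powers in (3). In particular, checking $\tilde\mu\in\mcG_{\bar\pi}$ needs the precise relation between $\delta_\G^{\bar\pi}$ and $\delta_\G^\pi$, and for reducible $\pi$ I would first reduce to irreducible $\pi$ via (1).
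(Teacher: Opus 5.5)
Your proposal is correct and follows essentially the same route as the paper. The paper likewise checks membership via \eqref{coordinates}, gets isometry from the $\mathcal{O}(\X)$-valued inner product in (2) and from \eqref{orthogonality2} with $\dim_q(\pi)=\operatorname{Tr}(\delta_\G^{1/2})$ in (3), obtains bijectivity in (3) by the symmetric dimension count, and identifies $\delta_\X$ by uniqueness after applying a modular automorphism. The only difference there is that the paper uses $\sigma^\X_{2i}$ to obtain $\delta_\X$ directly, while you use $\sigma_\X=\sigma^\X_{-i}$ and then invert and square, which is equivalent.

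One small correction in (3). By \eqref{aa} one has exactly $U_{\bar\pi}(\overline{e_k^\pi},\overline{e_j^\pi})^*=U_\pi(\delta_\G^{-1/4}e_k^\pi,\delta_\G^{1/4}e_j^\pi)$, not an expression in $\delta_\G^{1/4}e_j^\pi,\delta_\G^{-1/4}e_k^\pi$ ``up to conjugation''. Membership of $\tilde\mu$ in $\mathcal{G}_{\bar\pi}$ is then a one-line consequence of \eqref{actionmatrixcoeff}, after rewriting $\sum_k X_\pi(\cdot,e_k^\pi)\otimes U_\pi(e_k^\pi,\cdot)=\sum_k X_\pi(\cdot,\delta_\G^{1/4}e_k^\pi)\otimes U_\pi(\delta_\G^{-1/4}e_k^\pi,\cdot)$. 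The relation between $\delta_\G^{\bar\pi}$ and $\delta_\G^{\pi}$ is only needed later, for $\delta_\X^{\bar\pi}$.
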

\begin{proof} (1) This follows similarly as in  the discussion preceding the Proposition.

(2) It is straightforward to verify that the map $\mathcal{G}_{\pi_1}\otimes \mathcal{G}_{\pi_2}\to \mathcal{G}_{\pi_1\otimes \pi_2}: \mu_1 \otimes \mu_2 \mapsto (\mu_{2})_{23}(\mu_1)_{13}$ is well-defined and isometric. Using this embedding, we then calculate
\begin{align*}
    X_{\pi_1\otimes \pi_2}(\mu_1\otimes \mu_2, \xi_1\otimes \xi_2) &= \langle (\mu_{2})_{23}(\mu_{1})_{13}, \xi_1\otimes \xi_2 \otimes 1\rangle = \langle \mu_1, \xi_1\otimes 1\rangle \langle \mu_2, \xi_2\otimes 1\rangle  = X_{\pi_1}(\mu_1, \xi_2)X_{\pi_2}(\mu_2, \xi_2).
\end{align*}
Using this, the fact that $\sigma_{2i}^\X$ is multiplicative and the formula \eqref{modularanalytic} (which holds for arbitrary $\pi\in \Rep(\G)$), a simple calculation shows that
\begin{align*}
    X_{\pi_1\otimes \pi_2}(\delta_\X^{\pi_1\otimes \pi_2}(\mu_1\otimes \mu_2), \delta_\G^{\pi_1}\xi_1 \otimes \delta_\G^{\pi_2}\xi_2) = X_{\pi_1\otimes \pi_2}(\delta_\X^{\pi_1}\mu_1\otimes \delta_\X^{\pi_2} \mu_2, \delta_\G^{\pi_1}\xi_1\otimes \delta_\G^{\pi_2}\xi_2)
\end{align*}
for all $\mu_i\in \mcG_{\pi_i}$ and $\xi_i \in \mcH_{\pi_i}$, from which we conclude that $\delta_{\X}^{\pi_1\otimes \pi_2}\vert_{\mcG_{\pi_1}\otimes \mcG_{\pi_2}}= \delta_\X^{\pi_1}\otimes \delta_\X^{\pi_2}$.

(3) Given $\mu \in \mcG_\pi$, we calculate for $j\in \{1, \dots, d_\pi\}$ that
\begin{align*}
    \alpha(X_\pi(\delta_\X^{-1/4}\mu, \delta_\G^{1/4}e_j^\pi)) &= \sum_{k=1}^{d_\pi} X_\pi(\delta_\X^{-1/4} \mu, \delta_\G^{1/4}e_k^\pi) \otimes U_\pi(\delta_\G^{-1/4}e_k^\pi, \delta_\G^{1/4}e_j^\pi)\\
    &= \sum_{k=1}^{d_\pi} X_\pi(\delta_\X^{-1/4}\mu, \delta_\G^{1/4}e_k^\pi)\otimes U_{\bar{\pi}}(\overline{e_k^\pi}, \overline{e_j^\pi})^*,
\end{align*}
so that \eqref{coordinates} shows that $\sum_{j=1}^{d_\pi} \overline{e_j^\pi}\otimes X_\pi(\delta_\X^{-1/4}\mu, \delta_\G^{1/4}e_j^\pi)\in \mcG_{\bar{\pi}}$. A calculation using the orthogonality relation \eqref{orthogonality2} shows that the map in $(3)$ is isometric, so that $\dim(\mcG_\pi)\le \dim(\mcG_{\bar{\pi}})$. By symmetry, it follows that $\dim(\mcG_\pi)= \dim(\mcG_{\bar{\pi}})$, and the map in $(3)$ is a unitary isomorphism of Hilbert spaces. Under this identification, we then have
\begin{align*}
    X_{\bar{\pi}}(\overline{\delta_\X^{1/4}\mu}, \overline{\delta_\G^{-1/4}\xi}) &= \left\langle \sum_{j=1}^{d_\pi} \overline{e_j^\pi}\otimes X_\pi(\mu, \delta_\G^{1/4}e_j^\pi), \overline{\delta_\G^{-1/4}\xi}\otimes 1\right\rangle= \sum_{j=1}^{d_\pi} \langle \delta_\G^{-1/4}\xi, e_j^\pi\rangle X_\pi(\mu, \delta_\G^{1/4}e_j^\pi)^*= X_\pi(\mu, \xi)^*.
\end{align*}
The identity $\delta^{\bar{\pi}}_\X \bar{\mu}= \overline{(\delta_\X^{\pi})^{-1} \mu}$ for $\mu \in \mcG_\pi$ then follows similarly as in $(2)$, making use of the fact that $\sigma_{2i}^\X(a^*) = \sigma_{-2i}^\X(a)^*$ for $a\in \mcO(\X)$. 
\end{proof}

We write $(L^2(\X), \pi_\X, \xi_\X)$ for the GNS-triplet associated to the normal faithful state $\varphi_\X$ and $\Lambda_\X: L^\infty(\X)\to L^2(\X)$ for the corresponding GNS-map. It follows from \eqref{orthogonality2} that we have the unitary isomorphism
\begin{equation}\label{unitaryiso}
    \bigoplus_{\pi \in \Irr(\G)} (\overline{\mcG_\pi}\otimes \mcH_\pi) \to L^2(\X):\quad \overline{\mcG_\pi}\otimes \mcH_\pi\ni \overline{\mu} \otimes \xi \mapsto \dim_q(\pi)^{1/2}\Lambda_\X(X_\pi(\delta_\X^{-1/4}\mu, \xi)).
\end{equation} 

The canonical unitary implementation $U_\X\in B(L^2(\X))\ovot L^\infty(\G)$ associated to $L^\infty(\X)\stackrel{\alpha}\curvearrowleft \G$ is given by
\begin{equation}\label{formulaimplementation}
    U_\X(\Lambda_\X(x)\otimes \Lambda_\G(g)) = (\Lambda_\X\otimes \Lambda_\G)(\alpha(x)(1\otimes g)), \quad x\in L^\infty(\X), \quad g\in L^\infty(\G).
\end{equation}
Given $\pi \in \Rep(\G)$, $\mu\in \mcG_\pi$ and $\xi \in \mcH_\pi$, put
$$Y_\pi(\xi, \mu):= \overline{X_\pi(\delta_\X^{-1/4}\mu, \delta_\G^{1/4}\xi)}\in \mcO(\bar{\X}).$$
It is then easily verified that
\begin{equation}\label{baractionmatrixcoeff}\bar{\alpha}(Y_\pi(\xi, \mu))= \sum_{j=1}^{d_\pi} U_\pi(\xi, e_j^\pi)\otimes Y_\pi(e_j^\pi, \mu).\end{equation} The invariant functional for $\G\stackrel{\bar{\alpha}}\curvearrowright L^\infty(\bar{\X})$ is given by $\varphi_{\bar{\X}}(\bar{x})= \varphi_\X(x^*)= \overline{\varphi_\X(x)}$ for $x\in L^\infty(\X)$. We then have 
$\sigma_{\bar{\X}}(\bar{x})= \overline{\sigma_\X(x)}$, or equivalently 
$$\sigma_{\bar{\X}}(Y_\pi(\xi, \mu))  = Y_\pi(\delta_\G^{-1/2}\xi, \delta_\X^{-1/2}\mu), \quad \pi \in \Rep(\G), \quad \xi\in \mcH_\pi, \quad \mu \in \mcG_\pi.$$
We also have the following orthogonality relations, where $\pi, \pi'\in \Irr(\G)$, $\xi \in \mcH_\pi, \xi' \in \mcH_{\pi'}$, $\mu\in \mcG_\pi$ and $\mu'\in \mcG_{\pi'}$:
\begin{align}
    \label{orthogonality3}\varphi_{\bar{\X}}(Y_\pi(\xi, \mu) Y_{\pi'}(\xi',\mu')^*) &= \delta_{\pi, \pi'} \dim_q(\pi)^{-1}\langle \mu', \delta_\X^{-1/2}\mu\rangle \langle \xi, \xi'\rangle,\\
    \label{orthogonality4}\varphi_{\bar{\X}}(Y_\pi(\xi, \mu)^*Y_{\pi'}(\xi', \mu'))&= \delta_{\pi, \pi'}\dim_q(\pi)^{-1}\langle \xi', \delta_\G^{1/2}\xi\rangle\langle\mu, \mu'\rangle.
\end{align}
These relations follow from \eqref{orthogonality1} and \eqref{orthogonality2} by making some straightforward calculations.

We write $(L^2(\bar{\X}), \pi_{\bar{\X}}, \xi_{\bar{\X}})$ for the GNS-triplet associated to the normal faithful state $\varphi_{\bar{\X}}$ and $\Lambda_{\bar{\X}}: L^\infty(\bar{\X})\to L^2(\bar{\X})$ for the corresponding GNS-map. It follows from \eqref{orthogonality4} that we have the unitary isomorphism
\begin{equation}\label{unitaryiso2}
    \bigoplus_{\pi \in \Irr(\G)} (\overline{\mcH_\pi}\otimes \mcG_\pi) \to L^2(\bar{\X}):\quad \overline{\mcH_\pi}\otimes \mcG_\pi\ni \overline{\xi} \otimes \mu \mapsto \dim_q(\pi)^{1/2}\Lambda_{\bar{\X}}(Y_\pi(\delta_\G^{-1/4}\xi, \mu)).
\end{equation}

The following result follows immediately from Proposition \ref{1}. Note that the identifications from Proposition \ref{1} are implicit.

\begin{Prop}\label{2}
    Let $\pi_1, \pi_2, \pi\in \Rep(\G)$. Then 
    \begin{enumerate}
        \item $Y_{\pi_1\oplus \pi_2}(\xi_1\oplus \xi_2, \mu_1\oplus \mu_2)= Y_{\pi_1}(\xi_1, \mu_1)+ Y_{\pi_2}(\xi_2, \mu_2)$ for $\xi_i\in \mcH_{\pi_i}$ and $\mu_i \in \mcG_{\pi_i}.$
        \item\vspace{-1.5mm} $Y_{\pi_1\otimes \pi_2}(\xi_1\otimes \xi_2, \mu_1\otimes \mu_2) = Y_{\pi_1}(\xi_1, \mu_1)Y_{\pi_2}(\xi_2, \mu_2)$  for $\xi_i \in \mcH_{\pi_i}$ and $\mu_i \in \mcG_{\pi_i}.$
        \item\vspace{-1.5mm} $Y_{\pi}(\xi, \mu)^* = Y_{\bar{\pi}}(\overline{\delta_\G^{1/4}\xi}, \overline{\delta_\X^{-1/4}\mu})$ for $\xi \in \mcH_\pi$ and $\mu \in \mcG_\pi$.
    \end{enumerate}
\end{Prop}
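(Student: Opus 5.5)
Each item of Proposition \ref{2} will be obtained by applying the bar map $x\mapsto\bar{x}$ (complex-conjugate-linear, multiplicative, $*$-preserving) to the corresponding item of Proposition \ref{1}, using the definition
$$Y_\pi(\xi,\mu)=\overline{X_\pi(\delta_\X^{-1/4}\mu,\delta_\G^{1/4}\xi)}.$$
The only preparatory observation is how $\delta_\X$ and $\delta_\G$ behave under the identifications of Proposition \ref{1}. For direct sums both split as direct sums. For tensor products, Proposition \ref{1}(2) gives $\delta_\X^{\pi_1\otimes\pi_2}\vert_{\mcG_{\pi_1}\otimes\mcG_{\pi_2}}=\delta_\X^{\pi_1}\otimes\delta_\X^{\pi_2}$. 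Since $\delta_\G$ is a character of $\mscU_\G$, we also have $\delta_\G^{\pi_1\otimes\pi_2}=\delta_\G^{\pi_1}\otimes\delta_\G^{\pi_2}$. Positive powers are compatible with these decompositions by functional calculus; for the tensor case we first need that $\mcG_{\pi_1}\otimes\mcG_{\pi_2}$ is a reducing subspace for $\delta_\X^{\pi_1\otimes\pi_2}$. This holds because the subspace is invariant under a positive operator, hence also under its adjoint.

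For (1), distributing the fractional powers over the direct sum gives
$$Y_{\pi_1\oplus\pi_2}(\xi_1\oplus\xi_2,\mu_1\oplus\mu_2)=\overline{X_{\pi_1}(\delta_\X^{-1/4}\mu_1,\delta_\G^{1/4}\xi_1)+X_{\pi_2}(\delta_\X^{-1/4}\mu_2,\delta_\G^{1/4}\xi_2)}$$
by Proposition \ref{1}(1), and additivity of the bar map finishes the argument. For (2), the fractional powers factor as tensor products, Proposition \ref{1}(2) turns the matrix coefficient into a product, and multiplicativity of $x\mapsto\bar{x}$ into the conjugate algebra yields the claim.

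For (3) we start from $Y_\pi(\xi,\mu)^*=\overline{X_\pi(\delta_\X^{-1/4}\mu,\delta_\G^{1/4}\xi)^*}$ and apply Proposition \ref{1}(3) with $\mu\mapsto\delta_\X^{-1/4}\mu$ and $\xi\mapsto\delta_\G^{1/4}\xi$. This gives
$$Y_\pi(\xi,\mu)^*=\overline{X_{\bar{\pi}}(\overline{\mu},\overline{\xi})}.$$
It remains to rewrite this in the form $\overline{X_{\bar\pi}(\delta_\X^{-1/4}\bar\mu',\delta_\G^{1/4}\bar\xi')}$ and identify $\bar\mu'$ and $\bar\xi'$. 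On $\mcG_{\bar\pi}$, Proposition \ref{1}(3) gives $\delta_\X^{\bar\pi}\bar\mu=\overline{(\delta_\X^\pi)^{-1}\mu}$, so $(\delta_\X^{\bar\pi})^{-1/4}\,\overline{\delta_\X^{-1/4}\mu}=\bar\mu$. On $\overline{\mcH_\pi}$, the standard fact $\delta_\G^{\bar\pi}\bar\xi=\overline{(\delta_\G^\pi)^{-1}\xi}$ (already implicit in the formula for $U_\pi(\xi,\eta)^*$ in the preliminaries) gives $(\delta_\G^{\bar\pi})^{1/4}\,\overline{\delta_\G^{1/4}\xi}=\bar\xi$. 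Substituting these two identities gives exactly $Y_{\bar\pi}(\overline{\delta_\G^{1/4}\xi},\overline{\delta_\X^{-1/4}\mu})$.

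The only real care needed is the bookkeeping of the exponents of $\delta_\X$ and $\delta_\G$ under conjugation in (3); the rest is routine.
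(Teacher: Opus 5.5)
Your proposal is correct and follows the paper's own route: the paper simply says the result follows immediately from Proposition~\ref{1}, with the identifications of Proposition~\ref{1} left implicit, which is exactly your strategy of applying the conjugation map to that proposition. Your reducing-subspace remark for (2) and your tracking of $\delta_\X^{\bar\pi}$ and $\delta_\G^{\bar\pi}$ in (3) just supply the details the paper leaves to the reader.
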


\subsection{Matrix coefficients for the cotensor product}

Define the unital $*$-algebra
$$\mcO(\XGX):= \{z\in \mcO(\X)\odot \mcO(\bar{\X}): (\alpha\odot \id)(z) = (\id \odot\bar{\alpha})(z)\}.$$

Given $\pi\in \Rep(\G)$ and $\mu, \nu \in \mcG_\pi$, it follows from \eqref{actionmatrixcoeff} and \eqref{baractionmatrixcoeff} that
$$Z_\pi(\mu, \nu):= \sum_{j=1}^{d_\pi} X_\pi(\mu, e_j^\pi)\otimes Y_\pi(e_j^\pi, \nu) \in \mcO(\XGX).$$
\begin{Prop}\label{Hamel}
   $\{Z_\pi(f_j^\pi, f_k^\pi): \pi\in \Irr(\G), 1\le j,k\le m_\pi\}$ is a (Hamel) basis for $\mcO(\XGX)$. 
\end{Prop}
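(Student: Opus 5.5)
We will prove linear independence and spanning separately, working throughout with the Hamel bases $\{X_\pi(f_j^\pi, e_k^\pi)\}$ of $\mcO(\X)$ and $\{Y_\pi(e_k^\pi, f_j^\pi)\}$ of $\mcO(\bar{\X})$. The second family is a basis because $Y_\pi(\xi,\mu)$ is the conjugate of $X_\pi(\delta_\X^{-1/4}\mu, \delta_\G^{1/4}\xi)$ and $\delta_\X^{-1/4}\otimes\delta_\G^{1/4}$ is invertible on each $\mcG_\pi\otimes\mcH_\pi$.

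\textbf{Linear independence.} The elementary tensors
$$X_\pi(f_j^\pi, e_k^\pi)\otimes Y_{\pi'}(e_l^{\pi'}, f_m^{\pi'})$$
form a basis of $\mcO(\X)\odot\mcO(\bar{\X})$. Each $Z_\pi(f_j^\pi,f_k^\pi)$ is a sum over a disjoint set of these basis tensors, namely those with $\pi=\pi'$, first index $j$, last index $k$, and diagonal $\mcH_\pi$-indices. Hence the $Z$'s are linearly independent.

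\textbf{Spanning.} Take $z\in\mcO(\XGX)$ and expand it as
$$z=\sum c^{\pi,\pi'}_{j,k,l,m}\,X_\pi(f_j^\pi,e_k^\pi)\otimes Y_{\pi'}(e_l^{\pi'},f_m^{\pi'}).$$
By \eqref{actionmatrixcoeff} and \eqref{baractionmatrixcoeff}, the invariance condition $(\alpha\odot\id)(z)=(\id\odot\bar{\alpha})(z)$ reads
$$\sum c\; X_\pi(f_j,e_a)\otimes U_\pi(e_a,e_k)\otimes Y_{\pi'}(e_l,f_m)=\sum c\; X_\pi(f_j,e_k)\otimes U_{\pi'}(e_l,e_b)\otimes Y_{\pi'}(e_b,f_m).$$
The middle legs are matrix coefficients of $\G$, which form a Hamel basis of $\mcO(\G)$, and the outer legs are basis elements. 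Comparing coefficients therefore forces $c=0$ unless $\pi=\pi'$.

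Fix $\pi$ and $j,m$, and form the $d_\pi\times d_\pi$ matrix $C=(c_{k,l})$. Comparing the coefficients of $X(f_j,e_a)\otimes U_\pi(e_p,e_q)\otimes Y(e_r,f_m)$ on both sides gives $\delta_{pa}C_{q,r}=\delta_{qr}C_{a,p}$ for all $a,p,q,r$. Taking $p=a$ yields $C_{q,r}=\delta_{qr}C_{a,a}$, so $C$ is a scalar multiple of the identity. Consequently $z$ is a linear combination of the elements $\sum_k X_\pi(f_j,e_k)\otimes Y_\pi(e_k,f_m)=Z_\pi(f_j,f_m)$.

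The only delicate point is the coefficient comparison in the middle leg. Because the middle-leg matrix coefficients are linearly independent across all $\pi\in\Irr(\G)$ and all index pairs, both the equality $\pi=\pi'$ and the scalar form of $C$ follow directly.
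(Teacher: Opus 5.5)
Your proposal is correct and follows essentially the same route as the paper: expand $z$ in the tensor basis of $X$'s and $Y$'s, apply the cotensor condition, and compare coefficients using linear independence of the matrix coefficients of $\G$, $\X$ and $\bar{\X}$. This forces vanishing unless $\pi=\pi'$ with diagonal $\mcH_\pi$-indices, and forces the diagonal coefficient to be independent of the index. Your explicit identity $\delta_{pa}C_{q,r}=\delta_{qr}C_{a,p}$, and your disjoint-support argument for linear independence, simply spell out what the paper leaves implicit.
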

\begin{proof}
    Linear independence is clear. Let $z\in \mcO(\XGX)\subseteq \mcO(\X)\odot \mcO(\bar{\X})$, so that we can write
    $$z = \sum_{\pi_1, \pi_2\in \Irr(\G)} \sum_{j=1}^{m_{\pi_1}} \sum_{k=1}^{d_{\pi_1}} \sum_{s=1}^{d_{\pi_2}} \sum_{t=1}^{m_{\pi_2}}  \lambda_{\pi_1, \pi_2, j,k,s,t}X_{\pi_1}(f_j^{\pi_1}, e_k^{\pi_1}) \otimes Y_{\pi_2}(e_s^{\pi_2}, f_t^{\pi_2})$$
    for certain scalars $\lambda_{\pi_1, \pi_2, j,k,s,t}\in \C$. Since $z\in \mcO(\XGX)$, the expressions
    \begin{align*}
        &\sum_{\pi_1, \pi_2,j,k,s,t,p} \lambda_{\pi_1, \pi_2,j,k,s,t} X_{\pi_1}(f_j^{\pi_1}, e_p^{\pi_1}) \otimes U_{\pi_1}(e_p^{\pi_1}, e_k^{\pi_1}) \otimes Y_{\pi_2}(e_s^{\pi_2}, f_t^{\pi_2}), \\
        &\sum_{\pi_1, \pi_2, j,k,s,t,q} \lambda_{\pi_1, \pi_2, j,k,s,t} X_{\pi_1} (f_j^{\pi_1}, e_{k}^{\pi_1})\otimes U_{\pi_2}(e_s^{\pi_2}, e_q^{\pi_2})\otimes Y_{\pi_2}(e_q^{\pi_2}, f_t^{\pi_2})
    \end{align*}
    are equal. Invoking linear independence of the matrix coefficients $U_\pi, X_\pi$ and $Y_\pi$, it follows that $\lambda_{\pi_1, \pi_2,j,k,s,t}= 0$ if $\pi_1\ne \pi_2$ or $k\ne s$, and that $\lambda_{\pi,j,k,k,t}$ does not depend on the choice of $k$. Therefore,
    $$z= \sum_{\pi, j,k,t}  \lambda_{\pi, \pi, j,k,k,t} X_{\pi}(f_j^\pi, e_k^\pi) \otimes Y_{\pi}(e_k^\pi, f_t^\pi)= \sum_{\pi,j,t}  \lambda_{\pi, \pi, j,1,1,t} Z_\pi(f_j^\pi, f_t^\pi),$$
    so $\{Z_\pi(f_j^\pi, f_k^\pi): \pi\in \Irr(\G), 1\le j,k\le m_\pi\}$ spans $\mcO(\XGX)$.
\end{proof}

The following is now an immediate consequence of Proposition \ref{1} and Proposition \ref{2}. Again, the identifications from Proposition \ref{1} are implicit. 
\begin{Prop}\label{3} If $\pi, \pi_1, \pi_2\in\Rep(\G)$, we have:
\begin{enumerate} 
    \item $Z_{\pi_1\oplus \pi_2}(\mu_1\oplus \mu_2, \nu_1\oplus \nu_2)= Z_{\pi_1}(\mu_1, \nu_1)+ Z_{\pi_2}(\mu_2, \nu_2)$ for $\mu_i, \nu_i\in \mcG_{\pi_i}$.
     \item\vspace{-1.5mm} $Z_{\pi_1\otimes \pi_2}(\mu_1\otimes \mu_2, \nu_1\otimes \nu_2) = Z_{\pi_1}(\mu_1, \nu_1)Z_{\pi_2}(\mu_2, \nu_2)$ for $\mu_i, \nu_i\in \mcG_{\pi_i}$.
        \item\vspace{-1.5mm} $Z_\pi(\mu, \nu)^* = Z_{\bar{\pi}}(\overline{\delta_\X^{1/4}\mu}, \overline{\delta_\X^{-1/4}\nu})$ for $\mu, \nu \in \mcG_\pi$.
\end{enumerate}
\end{Prop}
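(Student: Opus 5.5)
The plan is to unfold the definition $Z_\pi(\mu,\nu)=\sum_j X_\pi(\mu,e_j^\pi)\otimes Y_\pi(e_j^\pi,\nu)$ and apply, factor by factor, the corresponding statements of Proposition \ref{1} (for the $X$'s) and Proposition \ref{2} (for the $Y$'s). In each case we must also check that the result does not depend on the choice of orthonormal basis of the relevant $\mcH$-space. For this we note that $\sum_j X_\pi(\mu,e_j^\pi)\otimes Y_\pi(e_j^\pi,\nu)$ is basis independent: $X_\pi(\mu,-)$ is linear, $Y_\pi(-,\nu)$ is antilinear in the $\mcH_\pi$-slot (it is the conjugate of a linear expression), and the sum is the contraction $\sum_j T(e_j)\otimes S(e_j)$ of a linear map with an antilinear one. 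So we may choose convenient bases.

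For (1), take the basis of $\mcH_{\pi_1}\oplus\mcH_{\pi_2}$ given by the union of bases of the summands. By Proposition \ref{1}(1) we have $X_{\pi_1\oplus\pi_2}(\mu_1\oplus\mu_2,e^{\pi_1}_j\oplus 0)=X_{\pi_1}(\mu_1,e_j^{\pi_1})$, and by Proposition \ref{2}(1) $Y_{\pi_1\oplus\pi_2}(e_j^{\pi_1}\oplus 0,\nu_1\oplus\nu_2)=Y_{\pi_1}(e_j^{\pi_1},\nu_1)$; similarly for $\pi_2$. Summing gives (1). For (2), use the product basis $\{e_j^{\pi_1}\otimes e_k^{\pi_2}\}$ of $\mcH_{\pi_1}\otimes\mcH_{\pi_2}$. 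Proposition \ref{1}(2) and Proposition \ref{2}(2) give
\[
Z_{\pi_1\otimes\pi_2}(\mu_1\otimes\mu_2,\nu_1\otimes\nu_2)=\sum_{j,k}X_{\pi_1}(\mu_1,e_j^{\pi_1})X_{\pi_2}(\mu_2,e_k^{\pi_2})\otimes Y_{\pi_1}(e_j^{\pi_1},\nu_1)Y_{\pi_2}(e_k^{\pi_2},\nu_2),
\]
and this is precisely $Z_{\pi_1}(\mu_1,\nu_1)Z_{\pi_2}(\mu_2,\nu_2)$ in the tensor product algebra $\mcO(\X)\odot\mcO(\bar{\X})$.

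For (3), taking adjoints factorwise gives
\[
Z_\pi(\mu,\nu)^*=\sum_j X_\pi(\mu,e_j^\pi)^*\otimes Y_\pi(e_j^\pi,\nu)^*
=\sum_j X_{\bar\pi}\bigl(\overline{\delta_\X^{1/4}\mu},\overline{\delta_\G^{-1/4}e_j^\pi}\bigr)\otimes Y_{\bar\pi}\bigl(\overline{\delta_\G^{1/4}e_j^\pi},\overline{\delta_\X^{-1/4}\nu}\bigr),
\]
using Proposition \ref{1}(3) and Proposition \ref{2}(3). The main point, and the only real obstacle, is to absorb the $\delta_\G^{\mp1/4}$ factors. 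Take $\{e_j^\pi\}$ to be an eigenbasis of $\delta_\G^\pi$ with eigenvalues $\lambda_j>0$; then the $\lambda_j^{-1/4}$ and $\lambda_j^{1/4}$ cancel, and $\{\overline{e_j^\pi}\}$ is an orthonormal basis of $\overline{\mcH_\pi}=\mcH_{\bar\pi}$. Scalars pull out of $X_{\bar\pi}(\cdot,-)$ linearly. They pull out of $Y_{\bar\pi}(-,\cdot)$ antilinearly, which is harmless because the $\lambda_j$ are real. The sum therefore equals $Z_{\bar\pi}(\overline{\delta_\X^{1/4}\mu},\overline{\delta_\X^{-1/4}\nu})$, computed in the basis $\{\overline{e_j^\pi}\}$ of $\mcH_{\bar\pi}$, and basis independence completes the argument. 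For general $\pi\in\Rep(\G)$ the operator $\delta_\G^\pi$ is still positive and invertible, so the same eigenbasis argument applies.
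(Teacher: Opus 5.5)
Your proposal is correct and takes the same route as the paper, which simply declares the statement an immediate consequence of Propositions \ref{1} and \ref{2}. You fill in the details the paper leaves implicit: the basis-independence of $\sum_j X_\pi(\mu,e_j^\pi)\otimes Y_\pi(e_j^\pi,\nu)$ (linear in the first tensor factor, antilinear in the second), and the cancellation of the $\delta_\G^{\mp 1/4}$ factors in (3) by working in an eigenbasis of $\delta_\G^\pi$.
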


Next, consider the positive faithful state
$\varphi_{\XGX}: \mcO(\XGX)\to \C: z \mapsto (\varphi_\X \odot \varphi_{\bar{\X}})(z).$
We then have the following orthogonality relations for $\pi, \pi' \in \Irr(\G)$, $\mu,\nu \in \mcG_\pi$ and $\mu', \nu'\in \mcG_{\pi'}$: 
    \begin{align}
        \label{orthogonality5}\varphi_{\XGX}(Z_\pi(\mu, \nu)Z_{\pi'}(\mu', \nu')^*)&= \delta_{\pi, \pi'}\dim_q(\pi)^{-1}\langle \nu', \delta_\X^{-1/2}\nu\rangle\langle \mu, \mu'\rangle,\\
        \label{orthogonality6} \varphi_{\XGX}(Z_\pi(\mu, \nu)^*Z_{\pi'}(\mu', \nu')) &= \delta_{\pi, \pi'}\dim_q(\pi)^{-1}\langle \mu', \delta_\X^{1/2}\mu\rangle\langle \nu, \nu' \rangle.
    \end{align}
The equation \eqref{orthogonality5} is an immediate consequence of \eqref{orthogonality1} and \eqref{orthogonality3}, while equation \eqref{orthogonality6} follows immediately from \eqref{orthogonality2} and \eqref{orthogonality4}. 

\subsection{Algebraic compact quantum hypergroup}

The following definition is found in \cite{DVD11a}:

\begin{Def} \label{compacthyperdef}
    An \emph{algebraic compact quantum hypergroup} consists of the data $(A, \Delta, \epsilon, \varphi, S)$, where $A$ is a unital $*$-algebra, $\Delta: A \to A\odot A$ is a $*$-preserving linear map (not assumed to be multiplicative!) satisfying $(\Delta \odot \id)\Delta = (\id \odot \Delta)\Delta$, $\epsilon: A \to \C$ is an algebra morphism (called counit)  satisfying $(\epsilon\odot \id)\Delta= \id = (\id \odot \epsilon)\Delta$, $\varphi: A\to \C$ is a unital faithful\footnote{Recall that a functional $\varphi: A \to \C$ is called faithful if $f(xA)= 0$ implies $x= 0$ and $f(Ax)= 0$ implies $x= 0$.} positive functional (called integral) satisfying $(\id \odot \varphi)\Delta(a)= \varphi(a)1$ for all $a\in A$ and $S: A \to A$ is an antimultiplicative linear bijection (called antipode) satisfying the strong left invariance condition
    $$S((\id \odot \varphi)(\Delta(a)(1\otimes b)))= (\id \odot \varphi)((1\otimes a)\Delta(b)), \quad a,b\in A.$$
\end{Def}

\begin{Rem} The algebra morphism $\epsilon: A \to \C$ is automatically $*$-preserving \cite{DVD11a}*{Proposition 1.4}. Moreover, by \cite{DVD11a}*{Proposition 2.2}, $\psi:= \varphi\circ S$ is right invariant  and satisfies the strong right invariance condition
$$S((\psi \odot \id)((a\otimes 1)\Delta(b)))= (\psi \odot \id)(\Delta(a)(b\otimes 1)), \quad a,b\in A.$$ Consequently, if $a\in A$, we have $\psi(a) = \varphi(\psi(a)1) = \varphi((\psi \odot \id)\Delta(a)) = \psi((\id \odot \varphi)\Delta(a)) = \psi(\varphi(a)1) = \varphi(a)$, and thus $\varphi= \psi$. In particular, $\varphi$ is right invariant and also satisfies the strong right invariance condition.
This restores the `asymmetry' in Definition \ref{compacthyperdef}, where `left' is favoured over `right'.
\end{Rem}

By analogy with the formulas \eqref{comultiplication action} and \eqref{counit and antipode}, we now define the linear maps
\begin{align*}
&\Delta_{\XGX}: \mcO(\XGX)\to \mcO(\XGX)\odot \mcO(\XGX): Z_\pi(\mu, \nu)\mapsto \sum_{j=1}^{m_\pi} Z_\pi(\mu, f_j^\pi)\otimes Z_\pi(f_j^\pi, \nu),\\
    &\epsilon_{\XGX}: \mcO(\XGX)\to \C: Z_\pi(\mu, \nu)\mapsto \langle \mu, \nu\rangle, \quad S_{\XGX}: \mcO(\XGX)\to \mcO(\XGX): Z_\pi(\mu, \nu)\mapsto Z_\pi(\nu, \mu)^*.
\end{align*}
It follows from Proposition \ref{3} that $\Delta_{\XGX}$ is $*$-preserving, that $\epsilon_{\XGX}$ is multiplicative and that $S_{\XGX}$ is antimultiplicative.

\begin{Theorem}\label{hypergroup}
    $(\mcO(\XGX), \Delta_{\XGX}, \epsilon_{\XGX}, \varphi_{\XGX}, S_{\XGX})$ is an algebraic compact quantum hypergroup.
\end{Theorem}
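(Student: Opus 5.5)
Every axiom of Definition \ref{compacthyperdef} will be checked on the Hamel basis $\{Z_\pi(f_j^\pi,f_k^\pi)\}$ of Proposition \ref{Hamel}; all maps involved are linear in the basis elements, so this suffices. Note that $Z_\pi(\mu,\nu)$ is linear in $\nu$ and anti-linear in $\mu$. Hence the defining formula $\Delta_{\XGX}(Z_\pi(\mu,\nu))=\sum_j Z_\pi(\mu,f_j^\pi)\otimes Z_\pi(f_j^\pi,\nu)$ does not depend on the choice of orthonormal basis, and it extends consistently to arbitrary $\mu,\nu$. By Proposition \ref{3}(1) the same formula also holds for reducible $\pi\in\Rep(\G)$, since $\mcG_{\pi_1\oplus\pi_2}=\mcG_{\pi_1}\oplus\mcG_{\pi_2}$ is an orthogonal decomposition.

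\textbf{Coassociativity and counit.} Both sides of the coassociativity identity give $\sum_{j,k} Z_\pi(\mu,f_j)\otimes Z_\pi(f_j,f_k)\otimes Z_\pi(f_k,\nu)$. The counit property follows from $\sum_j\langle\mu,f_j\rangle Z_\pi(f_j,\nu)=Z_\pi(\mu,\nu)$, together with the analogous identity in the second variable. The facts that $\Delta$ is $*$-preserving and $\epsilon$ is multiplicative are recorded just before the theorem.

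\textbf{Integral.} The state $\varphi_{\XGX}$ is positive because it is the restriction of the faithful state $\varphi_\X\otimes\varphi_{\bar{\X}}$; the same restriction also gives faithfulness. For left invariance, I first show that $\varphi_{\XGX}(Z_\pi(\mu,\nu))=0$ unless $\pi$ is trivial. This follows from the orthogonality relations \eqref{orthogonality5} applied with $\pi'$ trivial: then $Z_{\pi'}=1$ up to a scalar, because $\mcG_{\mathrm{triv}}=\C$ by ergodicity. For the trivial representation, $\varphi_{\XGX}(Z(1,1))=1$. Consequently $(\id\odot\varphi)\Delta(Z_\pi(\mu,\nu))=\sum_j Z_\pi(\mu,f_j)\varphi(Z_\pi(f_j,\nu))$ vanishes for nontrivial $\pi$ and equals $\varphi(Z_\pi(\mu,\nu))1$ for trivial $\pi$.

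\textbf{Antipode.} $S$ is antimultiplicative by Proposition \ref{3}. Using Proposition \ref{3}(3), we get $S(Z_\pi(\mu,\nu))=Z_{\bar\pi}(\overline{\delta_\X^{1/4}\nu},\overline{\delta_\X^{-1/4}\mu})$. This maps the finite-dimensional block for $\pi$ onto the block for $\bar\pi$ through an invertible linear map, because $\delta_\X$ is invertible and $\overline{\mcG_\pi}\cong\mcG_{\bar\pi}$ by Proposition \ref{1}(3). Since the blocks decompose $\mcO(\XGX)$, $S$ is a linear bijection.

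\textbf{Strong left invariance (main obstacle).} This is the step I expect to be hardest. Take $a=Z_\pi(\mu,\nu)$ and $b=Z_{\pi'}(\mu',\nu')^*$; such elements span $\mcO(\XGX)$ by Proposition \ref{3}(3), so this choice is no loss of generality. Using $\Delta(b)=\Delta(Z_{\pi'}(\mu',\nu'))^*$, the left-hand side becomes
\[
\sum_j Z_\pi(\mu,f_j)\,\varphi\big(Z_\pi(f_j,\nu)Z_{\pi'}(\mu',f'_k)^*\big)Z_{\pi'}(f'_k,\nu')^*,
\]
summed also over $k$. By \eqref{orthogonality5} this collapses to $\delta_{\pi\pi'}\dim_q(\pi)^{-1}\langle f_k,\delta_\X^{-1/2}\nu\rangle\, Z_\pi(\mu,\mu')Z_\pi(f_k,\nu')^*$, summed over $k$. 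The plan is then to apply $S$ and rewrite the result using $S(xy^*)=S(y^*)S(x)=Z_\pi(\nu',f_k)S(Z_\pi(\mu,\mu'))$, with $S(Z)=Z(\cdot,\cdot)^*$. The right-hand side, $\sum_k Z_{\pi'}(\mu',f_k)^*\varphi(Z_\pi(\mu,\nu)Z_{\pi'}(f_k,\nu')^*)$, is computed with \eqref{orthogonality5} as well. Comparing the two expressions should reduce to the identity $\sum_k\langle f_k,\delta_\X^{-1/2}\nu\rangle Z_\pi(\nu',f_k)=Z_\pi(\nu',\delta_\X^{-1/2}\nu)$, together with careful bookkeeping of which variable carries $\delta_\X^{-1/2}$ and of the (anti)linearity conventions. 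A mismatch of modular factors may force me to pick $b=Z_{\pi'}(\mu',\nu')$ instead, and to use \eqref{orthogonality6} with $S$ expressed via Proposition \ref{3}(3). Of the two choices, I will keep the one in which the $\delta_\X$-powers cancel exactly.
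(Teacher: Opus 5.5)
Your checks of coassociativity, the counit property, invariance, positivity and faithfulness of $\varphi_{\XGX}$, and bijectivity of $S_{\XGX}$ are fine; the paper treats these as evident. The gap is in strong left invariance, the only substantive axiom, and there your computation starts from the wrong expression.

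The left-hand side $S_{\XGX}\bigl((\id\odot\varphi_{\XGX})(\Delta_{\XGX}(a)(1\otimes b))\bigr)$ contains $b$ itself, not $\Delta_{\XGX}(b)$. The double sum $\sum_{j,k}Z_\pi(\mu,f_j)\,\varphi_{\XGX}\bigl(Z_\pi(f_j,\nu)Z_{\pi'}(\mu',f'_k)^*\bigr)\,Z_{\pi'}(f'_k,\nu')^*$ that you wrote is actually $(\id\odot\varphi_{\XGX})(\Delta_{\XGX}(a)\Delta_{\XGX}^{\op}(b))$. Via \eqref{orthogonality5} it collapses to $\delta_{\pi,\pi'}\dim_q(\pi)^{-1}Z_\pi(\mu,\mu')Z_\pi(\delta_\X^{-1/2}\nu,\nu')^*$, which is a product of two matrix coefficients. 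The right-hand side, by contrast, equals $\delta_{\pi,\pi'}\dim_q(\pi)^{-1}\langle\nu',\delta_\X^{-1/2}\nu\rangle Z_\pi(\mu',\mu)^*$, a scalar multiple of a single one.

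After applying $S_{\XGX}$, your expression is still a product. A matrix coefficient (your $Z_\pi(\nu',\delta_\X^{-1/2}\nu)$) sits exactly where the scalar $\langle\nu',\delta_\X^{-1/2}\nu\rangle$ must appear. So neither bookkeeping of $\delta_\X$-powers nor switching to $b=Z_{\pi'}(\mu',\nu')$ can close the argument. Separately, $S_{\XGX}(Z_\pi(\kappa,\lambda)^*)=Z_\pi(\delta_\X^{-1/2}\lambda,\delta_\X^{1/2}\kappa)$, not $Z_\pi(\lambda,\kappa)$, because $S_{\XGX}^2=\tau_{-i}^{\XGX}$. As it stands, the one non-trivial axiom is not proved, and your final paragraph is a plan with an unresolved branch.

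With the correct left-hand side, your first choice $b=Z_{\pi'}(\mu',\nu')^*$ works immediately with no modular mismatch; this is the paper's proof.

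\textbf{Left-hand side.} By \eqref{orthogonality5}, $(\id\odot\varphi_{\XGX})(\Delta_{\XGX}(a)(1\otimes b))=\sum_j Z_\pi(\mu,f_j)\,\varphi_{\XGX}(Z_\pi(f_j,\nu)Z_{\pi'}(\mu',\nu')^*)=\delta_{\pi,\pi'}\dim_q(\pi)^{-1}\langle\nu',\delta_\X^{-1/2}\nu\rangle Z_\pi(\mu,\mu')$. Applying $S_{\XGX}$ gives $\delta_{\pi,\pi'}\dim_q(\pi)^{-1}\langle\nu',\delta_\X^{-1/2}\nu\rangle Z_\pi(\mu',\mu)^*$.

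\textbf{Right-hand side.} Here $\Delta_{\XGX}(b)=\sum_k Z_{\pi'}(\mu',f'_k)^*\otimes Z_{\pi'}(f'_k,\nu')^*$. Then \eqref{orthogonality5} turns $\sum_k Z_{\pi'}(\mu',f'_k)^*\,\varphi_{\XGX}(Z_\pi(\mu,\nu)Z_{\pi'}(f'_k,\nu')^*)$ into $\delta_{\pi,\pi'}\dim_q(\pi)^{-1}\langle\nu',\delta_\X^{-1/2}\nu\rangle\sum_k\langle\mu,f_k\rangle Z_\pi(\mu',f_k)^*$. This is the same element, since $\sum_k\langle\mu,f_k\rangle Z_\pi(\mu',f_k)^*=Z_\pi(\mu',\mu)^*$.
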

\begin{proof} The only non-trivial thing left to verify is the strong left invariance
$$S_{\XGX}((\id \odot \varphi_{\XGX})(\Delta_{\XGX}(a)(1\otimes b))) = (\id \odot \varphi_{\XGX})((1\otimes a)\Delta_{\XGX}(b)), \quad a,b \in \mcO(\XGX).$$ It suffices to check this for $a= Z_\pi(\mu, \nu)$ and $b= Z_{\pi'}(\kappa, \lambda)^*$, where $\pi, \pi' \in \Irr(\G), \mu, \nu \in \mcG_\pi$ and $\kappa, \lambda \in \mcG_{\pi'}$. Using the orthogonality relation \eqref{orthogonality5}, we compute
\begin{align*}
    S_{\XGX}((\id \odot \varphi_{\XGX})(\Delta_{\XGX}(a)(1\otimes b))) &= \sum_{j=1}^{m_\pi} Z_\pi(f_j^\pi, \mu)^* \varphi_{\XGX} (Z_\pi(f_j^\pi, \nu)Z_{\pi'}(\kappa, \lambda)^*)\\
    &= \dim_q(\pi)^{-1}\sum_{j=1}^{m_\pi} Z_\pi(f_j^\pi, \mu)^* \delta_{\pi, \pi'} \langle f_j^\pi, \kappa\rangle \langle \lambda, \delta_\X^{-1/2}\nu\rangle\\
    &= \dim_q(\pi)^{-1} \delta_{\pi, \pi'} Z_\pi(\kappa, \mu)^* \langle \lambda, \delta_\X^{-1/2}\nu\rangle\\
    &=  \dim_q(\pi)^{-1} \delta_{\pi, \pi'} \sum_{j=1}^{m_\pi} Z_{\pi'}(\kappa, f_j^\pi)^*\langle \mu, f_j^\pi\rangle\langle \lambda, \delta_\X^{-1/2} \nu\rangle\\
    \vspace{-1.0mm}&= \sum_{j=1}^{m_{\pi'}} Z_{\pi'}(\kappa, f_j^{\pi'})^* \varphi_{\XGX}(Z_\pi(\mu, \nu)Z_{\pi'}(f_j^{\pi'},\lambda)^*)\\
    &= (\id \odot \varphi_{\XGX})((1\otimes a)\Delta_{\XGX}(b)),
\end{align*}
as desired.
\end{proof}

 Let us write $\mathscr{U}_{\XGX}:= \operatorname{Lin}_\C(\mcO(\XGX), \C)$ for the algebraic dual of $\mcO(\XGX)$. We endow it with the (unital) $*$-algebra structure given by
$$(\omega \star \omega')(z)= (\omega \odot \omega')\Delta_{\XGX}(z), \quad \omega^*(z)= \overline{\omega(S_{\XGX}(z)^*)}, \quad \omega, \omega'\in \mathscr{U}_{\XGX}, \quad z \in \mcO(\XGX).$$

The following result is simply a reformulation of the work that has been done so far:

\begin{Prop}\label{duality}
    Given  $\pi \in \Rep(\G)$, there is a unique unital $*$-representation
    $\mathscr{F}_\pi: \mathscr{U}_{\XGX}\to B(\mcG_\pi)$ satisfying
    $\langle \mu, \mathscr{F}_\pi(\omega)\nu\rangle = \omega(Z_\pi(\mu, \nu))$ for all $\mu, \nu \in \mcG_\pi$ and all $\omega \in \mathscr{U}_{\XGX}.$
    The induced map
    $$\mathscr{F}: \mathscr{U}_{\XGX}\to \prod_{\pi\in \Irr(\G)} B(\mcG_\pi): \omega \mapsto (\mathscr{F}_{\pi}(\omega))_{\pi\in \Irr(\G)}$$
    is a $*$-algebra isomorphism.
\end{Prop}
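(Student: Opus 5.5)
First I would define $\mathscr{F}_\pi$. For $\omega \in \mathscr{U}_{\XGX}$, the map $(\mu,\nu)\mapsto \omega(Z_\pi(\mu,\nu))$ is sesquilinear on $\mcG_\pi$: it is conjugate-linear in $\mu$ because $X_\pi(\mu,\xi)=\langle \mu, \xi\otimes 1\rangle$ is, and linear in $\nu$ because $Y_\pi(\xi,\nu)=\overline{X_\pi(\delta_\X^{-1/4}\nu,\delta_\G^{1/4}\xi)}$ is conjugate-linear-then-conjugated. Since $\mcG_\pi$ is finite-dimensional, there is a unique operator $\mathscr{F}_\pi(\omega)\in B(\mcG_\pi)$ representing this form, and $\omega\mapsto\mathscr{F}_\pi(\omega)$ is linear.

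Next I would check the algebraic properties. Multiplicativity follows directly from the formula for $\Delta_{\XGX}$:
$$\langle\mu,\mathscr{F}_\pi(\omega\star\omega')\nu\rangle=\sum_j \omega(Z_\pi(\mu,f_j^\pi))\,\omega'(Z_\pi(f_j^\pi,\nu))=\langle \mu,\mathscr{F}_\pi(\omega)\mathscr{F}_\pi(\omega')\nu\rangle .$$
Unitality holds because the unit of $\mathscr{U}_{\XGX}$ is $\epsilon_{\XGX}$, and $\epsilon_{\XGX}(Z_\pi(\mu,\nu))=\langle\mu,\nu\rangle$. For the $*$-property, note that $S_{\XGX}(Z_\pi(\mu,\nu))^*=Z_\pi(\nu,\mu)$, so
$$\omega^*(Z_\pi(\mu,\nu))=\overline{\omega(Z_\pi(\nu,\mu))}=\langle \mu,\mathscr{F}_\pi(\omega)^*\nu\rangle.$$
One point needs care: $\Delta_{\XGX}$, $\epsilon_{\XGX}$ and $S_{\XGX}$ are defined on the basis with $\pi$ irreducible. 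To handle non-irreducible $\pi\in\Rep(\G)$, I would decompose $\pi$ into irreducibles and use Proposition \ref{3}(1) together with the unitary $\Phi$. This shows the same formulas hold for arbitrary $\pi$, after checking that they do not depend on the choice of orthonormal basis $\{f_j^\pi\}$ (a change of basis gives a unitary, so the sum $\sum_j Z_\pi(\mu,f_j^\pi)\otimes Z_\pi(f_j^\pi,\nu)$ is unchanged).

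Finally, I would prove that $\mathscr{F}$ is bijective. By Proposition \ref{Hamel}, $\{Z_\pi(f_j^\pi,f_k^\pi)\}$ is a Hamel basis of $\mcO(\XGX)$. A linear functional $\omega$ is therefore the same thing as an arbitrary family of scalars $\omega(Z_\pi(f_j^\pi,f_k^\pi))$, and these scalars are exactly the matrix entries of $\mathscr{F}_\pi(\omega)$ in the basis $\{f_j^\pi\}$. Hence $\mathscr{F}$ identifies $\mathscr{U}_{\XGX}$ with $\prod_\pi B(\mcG_\pi)$ bijectively, with no finiteness condition since the algebraic dual of a direct sum is a direct product.

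The only mildly delicate step is the extension to reducible $\pi$, that is, the compatibility of $\Delta_{\XGX}$ with the identifications of Proposition \ref{1}. Everything else is bookkeeping.
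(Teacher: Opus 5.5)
Your proposal is correct and follows exactly the route the paper has in mind. The paper gives no separate proof, calling the statement a reformulation of the defining formulas for $\Delta_{\XGX}$, $\epsilon_{\XGX}$, $S_{\XGX}$ together with the Hamel basis of Proposition \ref{Hamel}, which is precisely your argument; your extra check that these formulas persist for reducible $\pi$ (via Proposition \ref{3}(1), the unitary $\Phi$, and basis-independence of $\sum_j Z_\pi(\mu,f_j^\pi)\otimes Z_\pi(f_j^\pi,\nu)$) fills in a detail the paper leaves implicit.
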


\subsection{Modular data} By analogy with the formulas \eqref{aa}, \eqref{bb} and \eqref{cc}, we define the modular group, the scaling group and the unitary antipode on $\mcO(\XGX)$ via the formulas:
\begin{align*}
    &\sigma_z^{\XGX}: \mcO(\XGX)\to \mcO(\XGX): Z_\pi(\mu, \nu) \mapsto Z_\pi(\delta_\X^{i\bar{z}/2}\mu, \delta_\X^{-iz/2}\nu), \\
    &\tau_z^{\XGX}: \mcO(\XGX)\to \mcO(\XGX): Z_\pi(\mu, \nu)\mapsto  Z_\pi(\delta_\X^{i\bar{z}/2}\mu, \delta_\X^{iz/2}\nu),\\
    &R_{\XGX}: \mcO(\XGX)\to \mcO(\XGX): Z_\pi(\mu, \nu) \mapsto Z_\pi(\delta_\X^{-1/4}\nu, \delta_\X^{1/4}\mu)^*.
\end{align*}
It is clear that the maps $\sigma_z^{\XGX}$ and $\tau_z^{\XGX}$ define algebra homomorphisms satisfying
$$\sigma_z^{\XGX}(a)^* = \sigma_{\bar{z}}^{\XGX}(a^*), \quad \tau_z^{\XGX}(a)^*= \tau_{\bar{z}}^{\XGX}(a^*), \quad z\in \C, \quad a \in \mcO(\XGX).$$
It is easy to verify that $\sigma_{-i}^{\XGX}$ is KMS for $\varphi_{\XGX}$.

We have expected identities, such as $R_{\XGX}^2 =  \id$, $S_{\XGX}= R_{\XGX}\circ \tau_{-i/2}^{\XGX}, S_{\XGX}^2 = \tau_{-i}^{\XGX}$ as well as $\Delta_{\XGX}\circ \tau_z^{\XGX}= (\tau_z^{\XGX}\odot \tau_z^{\XGX})\circ \Delta_{\XGX}$ for $z\in \C$.

\section{Universal version of the compact quantum hypergroup}
\label{universalversion}

In general, the $*$-algebra $\mcO(\XGX)$ does not admit a universal $C^*$-envelope \cite{DCDR25}*{Remark 2.10}. However, $\mcO(\XGX)$ can be embedded in a useful larger $*$-algebra, which does admit a universal $C^*$-envelope. More concretely, we endow the algebraic tensor product $\mcO(\X)\odot \mathscr{U}_\G\odot \mcO(\bar{\X})$ with the unital $*$-algebra structure
\begin{align*}
    (x\otimes \chi \otimes \overline{y})(x'\otimes \chi'\otimes \overline{y'}) &= xx_{(0)}' \otimes \chi(x_{(1)}'-)\chi'(-R_\G(y_{(1)})^*) \otimes \overline{y_{(0)}y'},\\
    (x\otimes \chi \otimes \overline{y})^* &= x_{(0)}^* \otimes \chi^*(x_{(1)}^* - R_\G(y_{(1)})) \otimes \overline{y_{(0)}^*}.
\end{align*}
The (non-unital) $*$-subalgebra $\mcO(\X)\odot \mathcal{U}_\G \odot \mcO(\bar{\X})$ will be denoted by $\mcO(\XrX)$ and we will call it the \emph{double crossed product $*$-algebra} \cites{AS21,DCDR25}. It is idempotent and non-degenerate \cite{DR25d}*{Lemma 3.4.5}, so one can consider its multiplier $*$-algebra $M(\mcO(\XrX))$. The three $*$-algebras $\mcO(\X), \mathscr{U}_\G, \mcO(\bar{\X})$ can then be naturally identified as unital $*$-subalgebras of $M(\mcO(\XrX))$ \cite{DR25d}*{Proposition 3.4.6}. Under these embeddings, the product $x\chi\overline{y}$ inside the multiplier algebra corresponds exactly to the elementary tensor $x\otimes \chi \otimes \bar{y}$, where $x\in \mcO(\X), \chi\in \mathcal{U}_\G, y \in \mcO(\bar{\X})$. Further, inside $M(\mcO(\XrX))$, we have the following useful commutation rules:
$$x\overline{y}= \overline{y}x, \quad \chi x = x_{(0)}\chi(x_{(1)}-), \quad \bar{y}\chi = \chi(-R_\G(y_{(1)})^*)\overline{y_{(0)}}, \quad x,y \in \mcO(\X), \quad \chi\in \mathscr{U}_\G.$$
The $*$-algebra $\mcO(\X\rtimes \G \ltimes \bar{\X})$ admits a universal $C^*$-envelope, denoted by $C^u_0(\XrX)$, in which $\mcO(\XrX)$ embeds \cite{DR25d}*{Proposition 3.4.7}.

Given $\mcH=(\mcH, \pi_\mcH, \rho_\mcH, U_\mcH) \in \Corr^\G(L^\infty(\X), L^\infty(\X))$, there is an induced non-degenerate $*$-representation
$$\theta^\mcH: \mcO(\XrX)\to B(\mcH): x\chi\bar{y}\mapsto \pi_\mcH(x)U_\mcH(\chi)\rho_\mcH(y^*).$$
Here, we recall the natural inclusion $\mathcal{U}_\G\subseteq L^1(\G)$ and the notation $U_\mcH(\chi) = (\id \otimes \chi)(U_\mcH)$.
In fact, every non-degenerate $*$-representation $\mcO(\XrX)\to B(\mcH)$ arises in this way \cite{DCDR25}*{Proposition 2.14}. It follows that
\begin{equation}\label{universalnormcrossed}
    \|s\|_{C^u_0(\XrX)}= \sup_{\mcH\in \Corr^\G(L^\infty(\X), L^\infty(\X))} \|\theta^\mcH(s)\|, \quad s\in \mcO(\XrX).
\end{equation}

\begin{Prop}\label{corner} \cite{DCDR25}*{Proposition 2.8}
    Consider the linear maps
\begin{align*}
    &\kappa\colon \mcO(\XGX)\to \varphi_\G\mcO(\XrX)\varphi_\G\colon \sum_{j=1}^{n} x_j\otimes \overline{y_j} \mapsto \sum_{j=1}^n x_j \delta_\G^{1/2} \overline{y_j} \varphi_\G,\\
    &E\colon \mcO(\XrX)\to \mcO(\XGX)\colon x\overline{y}\chi\mapsto \chi(1)\varphi_\G(x_{(1)}\sigma_{-i/2}^\G(y_{(1)}^*))x_{(0)}\otimes  \overline{y_{(0)}}.
\end{align*}
Then $\kappa(E(z))= \varphi_\G z \varphi_\G$  for all $z\in \mcO(\XrX)$ and $\kappa$ is a $*$-isomorphism which has the restriction of $E$ as inverse.
\end{Prop}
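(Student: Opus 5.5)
The plan is to verify the claims directly inside the multiplier algebra $M(\mcO(\XrX))$, using the commutation rules listed above. Here $\varphi_\G\in\mathcal{U}_\G$ is viewed via \eqref{identification} as the central projection onto the trivial representation. Concretely, $\varphi_\G$ is the element with $(\id\odot\varphi_\G)(U_\pi)=\delta_{\pi,\epsilon}$, and it satisfies
$$\varphi_\G\star\varphi_\G=\varphi_\G=\varphi_\G^*,\qquad \chi\star\varphi_\G=\chi(1)\varphi_\G\quad (\chi\in\mathscr{U}_\G).$$

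First I would compute $\varphi_\G z\varphi_\G$ for an elementary $z=x\bar y\chi$. Since $\varphi_\G\chi\varphi_\G=\chi(1)\varphi_\G$, this reduces to $\chi(1)\,\varphi_\G x\bar y\varphi_\G$.

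Next I would move $\varphi_\G$ through $x$ and $\bar y$ using
$$\chi x = x_{(0)}\chi(x_{(1)}-),\qquad \bar y\chi = \chi(-R_\G(y_{(1)})^*)\,\overline{y_{(0)}}.$$
The left factor becomes $x_{(0)}\varphi_\G(x_{(1)}-)$ and the right factor becomes $\varphi_\G(-R_\G(y_{(1)})^*)\overline{y_{(0)}}$. The middle product is a functional of the form $\varphi_\G(a-)\star\varphi_\G(-b)$ sandwiched between $x_{(0)}$ and $\overline{y_{(0)}}$, and then multiplied by $\varphi_\G$ on the right. By invariance of the Haar state and the relation $\chi\star\varphi_\G=\chi(1)\varphi_\G$, such a product collapses to a scalar times $\varphi_\G$. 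The scalar should be $\varphi_\G(x_{(1)}\sigma_{-i/2}^\G(y_{(1)}^*))$ once the Woronowicz character $\delta_\G^{1/2}$ is accounted for. This computation is exactly what forces the $\delta_\G^{1/2}$ in the definition of $\kappa$. I expect it to be the main obstacle: tracking $R_\G$, $\sigma^\G_{-i/2}$ and $\delta_\G^{\pm 1/2}$ through \eqref{EqFormSigmaScaling} and \eqref{aa}. To keep it manageable, I would test on matrix coefficients $x=X_\pi(\mu,\xi)$ and $y$ with $\bar y=Y_{\pi'}(\eta,\nu)$, and then use \eqref{actionmatrixcoeff}, \eqref{baractionmatrixcoeff} and the orthogonality relation \eqref{EqPW1}. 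This gives the identity $\kappa(E(z))=\varphi_\G z\varphi_\G$.

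For the isomorphism, I would check that on $\mcO(\XGX)$ we have $\kappa(w)=\varphi_\G w\varphi_\G$ up to the identification of $w=\sum x_j\otimes\overline{y_j}$ with $\sum x_j\overline{y_j}\in M(\mcO(\XrX))$. The idea is that the cotensor condition $(\alpha\odot\id)(w)=(\id\odot\bar\alpha)(w)$ lets $\varphi_\G$ commute past $w$ up to the factor $\delta_\G^{1/2}$. Granting this, $\kappa$ is multiplicative: $\kappa(w)\kappa(w')=\varphi_\G w\varphi_\G w'\varphi_\G$, and the inner $\varphi_\G$ can be absorbed for cotensor elements. The $*$-property of $\kappa$ then follows from $\varphi_\G^*=\varphi_\G$ and the involution formula.

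Finally, I would show $E(\kappa(w))=w$. Writing $\kappa(w)=\sum x_j\overline{y_j}\,(\delta_\G^{1/2}\varphi_\G)$ after commuting, the formula for $E$ gives $\sum \varphi_\G(x_{j(1)}\sigma^\G_{-i/2}(y_{j(1)}^*))\,x_{j(0)}\otimes\overline{y_{j(0)}}$. On the basis $Z_\pi(\mu,\nu)$ of Proposition \ref{Hamel}, \eqref{EqPW1} together with the cotensor condition collapses this to $w$. Since $\kappa\circ E$ is the compression, $\kappa$ is surjective onto $\varphi_\G\mcO(\XrX)\varphi_\G$. Combined with $E\circ\kappa=\id$, $\kappa$ is bijective with inverse $E|$.
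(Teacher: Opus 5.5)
There is a genuine gap. Several of the identities your proposal relies on are false, and the places where they fail are exactly where $\delta_\G^{1/2}$ matters.

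\textbf{The collapse in the first step fails.} After you move the left $\varphi_\G$ rightwards past $x$ and the right $\varphi_\G$ leftwards past $\bar y$, both copies of $\varphi_\G$ are used up, and
\[
\varphi_\G x\bar y\varphi_\G = x_{(0)}\bigl[\varphi_\G(x_{(1)}-)\star\varphi_\G(-R_\G(y_{(1)})^*)\bigr]\overline{y_{(0)}}
\]
is just the normal form of this element in $\mcO(\X)\odot\mathcal{U}_\G\odot\mcO(\bar{\X})$. There is no $\varphi_\G$ left on the right. The bracket is not a multiple of $\varphi_\G$. For $x\in\mcO(\X)_\pi$, the functional $\varphi_\G(x_{(1)}-)$ lives in the block $B(\mcH_{\bar\pi})$ of $\prod_\sigma B(\mcH_\sigma)$, so the product does too, whereas $\varphi_\G$ is the unit of the trivial block. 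Already for a classical group, $\varphi_\G(a-)\star\varphi_\G(-b)$ is the measure with density $a*b$, which is not constant.

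The two copies of $\varphi_\G$ must be made adjacent before $\chi'\star\varphi_\G=\chi'(1)\varphi_\G$ can be used. Push the left one past $\bar y$ as well, using the inverted rule $\chi\bar y=\overline{y_{(0)}}\,\chi(-\tau^\G_{i/2}(y_{(1)}^*))$, which follows from the stated rule and $S_\G^{-1}\circ R_\G=\tau^\G_{i/2}$. This gives $\varphi_\G x\bar y\varphi_\G=\varphi_\G(x_{(1)}\tau^\G_{i/2}(y_{(1)}^*))\,x_{(0)}\overline{y_{(0)}}\varphi_\G$. On the $\kappa(E(z))$ side you must likewise move $\delta_\G^{1/2}$ past $\overline{y_{(0)}}$, via $\delta_\G^{1/2}\bar y=\delta_\G^{1/2}(y_{(1)}^*)\,\overline{y_{(0)}}\,\delta_\G^{1/2}$. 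Then \eqref{EqFormSigmaScaling} gives $\delta_\G^{1/2}(b_{(1)})\sigma^\G_{-i/2}(b_{(2)})=\tau^\G_{i/2}(b)$, and the two scalars match.

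\textbf{The isomorphism part rests on a false identity.} You use $\kappa(w)=\varphi_\G w\varphi_\G$ for $w\in\mcO(\XGX)$, which fails whenever $d_\pi\neq\dim_q(\pi)$ for some $\pi$ with $\mcG_\pi\neq 0$. From the definition of $E$ and \eqref{EqPW1} one gets, for irreducible $\pi,\pi'$,
\[
E(X_\pi(\mu,\xi)Y_{\pi'}(\eta,\nu)\chi)=\chi(1)\delta_{\pi,\pi'}\dim_q(\pi)^{-1}\langle\eta,\xi\rangle Z_\pi(\mu,\nu).
\]
Hence $E(Z_\pi(\mu,\nu)\varphi_\G)=\frac{d_\pi}{\dim_q(\pi)}Z_\pi(\mu,\nu)$, and by the first identity $\varphi_\G Z_\pi(\mu,\nu)\varphi_\G=\frac{d_\pi}{\dim_q(\pi)}\kappa(Z_\pi(\mu,\nu))$. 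So $w\mapsto\varphi_\G w\varphi_\G$ is not even multiplicative. Applying $\theta^{L^2(\X)}$ turns it into $Z_\pi(\mu,\nu)\mapsto\frac{d_\pi}{\dim_q(\pi)}\langle\mu,\nu\rangle$, which is not a character for $\X=\G=SU_q(2)$. Consequently the step of absorbing the inner $\varphi_\G$ is wrong, and your $*$-argument built on it fails as well.

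\textbf{The same slip recurs in your last step.} $\delta_\G^{1/2}$ does not commute with $\overline{y_j}$, so $\kappa(w)\neq\sum_j x_j\overline{y_j}\,\delta_\G^{1/2}\varphi_\G=w\varphi_\G$. Indeed $E(w\varphi_\G)$ equals $\frac{d_\pi}{\dim_q(\pi)}w$, not $w$. With the correct form \eqref{kappamatrix}, the formula above gives $E(\kappa(Z_\pi(\mu,\nu)))=\dim_q(\pi)^{-1}\operatorname{Tr}(\delta_\G^{-1/2})Z_\pi(\mu,\nu)=Z_\pi(\mu,\nu)$. This trace is what the $\delta_\G^{1/2}$ in $\kappa$ is for.

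\textbf{What does hold.} Only one-sided absorption is true: $\varphi_\G\kappa(w)=\varphi_\G\kappa(w)\varphi_\G=\kappa(E(\kappa(w)))=\kappa(w)$, which also shows that $\kappa$ lands in the corner.
\begin{itemize}
\item Multiplicativity: write $\kappa(w)=\hat w\varphi_\G$ with $\hat w:=\sum_j x_j(\delta_\G^{1/2}\overline{y_j}\delta_\G^{-1/2})\in\mcO(\X)\odot\mcO(\bar{\X})$. The map $w\mapsto\hat w$ is multiplicative, since conjugation by $\delta_\G^{1/2}$ preserves $\mcO(\bar{\X})$, which commutes with $\mcO(\X)$. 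Hence $\kappa(w)\kappa(w')=\hat w\,\varphi_\G\kappa(w')=\hat w\hat w'\varphi_\G=\kappa(ww')$.
\item $*$-preservation: $\kappa(w)^*=\varphi_\G\hat w^*\varphi_\G=\kappa(E(\hat w^*\varphi_\G))$, and $E(\hat w^*\varphi_\G)=w^*$ because $\operatorname{Tr}(\delta_\G^{1/2})=\dim_q(\pi)$.
\item Well-definedness of $E$: the matrix-coefficient formula for $E$ also shows that $E$ takes values in $\mcO(\XGX)$, a point your proposal does not address.
\end{itemize}
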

The map $\kappa$ acts on matrix coefficients via
\begin{align}
    \label{kappamatrix}\kappa(Z_\pi(\mu, \nu)) &= \sum_{j=1}^{d_\pi} X_\pi(\mu, e_j^\pi) Y_\pi(\delta_\G^{-1/2} e_j^\pi,\nu)\varphi_\G, \quad \pi \in \Rep(\G), \quad \mu, \nu \in \mcG_\pi.
\end{align}
We now endow $\mcO(\XGX)$ with the $C^*$-norm
$$\|z\|_u:= \|\kappa(z)\|_{C^u_0(\XrX)}, \quad z\in \mcO(\XGX).$$
The $C^*$-completion of $\mcO(\XGX)$ with respect to this norm will be denoted by $C^u(\XGX)$. Evidently, the $*$-isomorphism $\kappa$ from Proposition \ref{corner} then extends to a $*$-isomorphism
$$\kappa^u: C^u(\XGX)\to \varphi_\G C^u_0(\XrX)\varphi_\G.$$
Given $\mcH\in \Corr^\G(L^\infty(\X), L^\infty(\X))$, the composition $\theta^\mcH\circ \kappa$ leads to the unital $*$-representation $$\theta_\square^\mcH: \mcO(\XGX)\to B(U_\mcH(\varphi_\G)\mcH)$$ which acts on matrix coefficients via
\begin{equation}\label{usethis}
\theta_\square^{\mcH}(Z_\pi(\mu, \nu)) = \sum_{j=1}^{d_\pi} \pi_\mcH(X_\pi(\mu, e_j^\pi)) \rho_\mcH(X_\pi(\delta_\X^{-1/4}\nu, \delta_\G^{-1/4}e_j^\pi)^*), \quad \pi \in \Rep(\G), \quad \mu, \nu \in \mcG_\pi.    
\end{equation}

We then find, making use of \eqref{universalnormcrossed},
\begin{equation}\label{cornernorm}
    \|z\|_u = \sup_{\mcH \in \Corr^\G(L^\infty(\X), L^\infty(\X))} \|\theta_\square^{\mcH}(z)\|, \quad z \in \mcO(\XGX).
\end{equation}
In particular, given $\mcH\in \Corr^\G(L^\infty(\X), L^\infty(\X))$, the $*$-representation $\theta_\square^\mcH: \mcO(\XGX)\to B(U_\mcH(\varphi_\G)\mcH)$ extends uniquely to a $*$-representation $\theta_\square^\mcH: C^u(\XGX)\to B(U_\mcH(\varphi_\G)\mcH)$.

We now show that the algebraic compact quantum hypergroup structure on $\mcO(\XGX)$ carries over to the $C^*$-algebra $C^u(\XGX)$, so that we obtain a $C^*$-algebraic compact quantum hypergroup in the sense of Chapovsky and Vainerman \cite{CV99}. Let us first recall the relevant definitions.

\begin{Def}\cite{CV99}*{Definition 1.1}  We call $(A, \Delta, \epsilon, \star)$ a hypergroup structure on the unital $C^*$-algebra $A$ if $\Delta: A\to A\otimes A$ is a unital positive linear map satisfying $(\Delta\otimes \id)\Delta=  (\id \otimes \Delta)\Delta$, $\epsilon: A \to \C$ is an algebra homomorphism satisfying $(\epsilon\otimes \id)\Delta= \id = (\id \otimes \epsilon)\Delta$ and $\star: A\to A$ is an anti-linear, multiplicative, $*$-preserving involution satisfying $\Delta^{\op}\circ \star= (\star\otimes \star)\circ \Delta$
\end{Def}

\begin{Def} \cite{CV99}*{Definition 2.2}
    Let $(A, \Delta, \epsilon, \star)$ be a hypergroup structure on the unital $C^*$-algebra $A$. Given $\omega \in A^*$, define $\omega^+\in A^*$ by $\omega^+(a)= \overline{\omega(a^\star)}$ for $a\in A$. An element $a\in A$ is called \emph{positive definite} if $(\omega \otimes \omega^+)\Delta(a)\ge 0$ for all $\omega \in A^*$.
\end{Def}

\begin{Rem}\label{hame}
    For a compact quantum group $\G$, one has the density conditions
\begin{equation}\label{quantumdensity}
    [\Delta_\G(C(\G))(C(\G)\otimes 1)]= C(\G)\otimes C(\G)= [\Delta_\G(C(\G))(1\otimes C(\G))].
\end{equation}
However, in \cite{CV99}*{Example 2.5}, it was argued that asking for such a condition in the context of hypergroup structures is too strong. Rather, one proceeds as follows: consider a hypergroup structure $(A, \Delta, \epsilon, \star)$. If the linear span of the positive definite elements of $A$ is norm-dense in $A$, there exists a unique state $\varphi\in A^*$ such that $(\id \otimes \varphi)\Delta(a)= \varphi(a)1= (\varphi\otimes \id)\Delta(a)$ for all $a\in A$ \cite{CV99}*{Theorem 2.3}. The existence of the Haar state in the theory of $C^*$-algebraic compact quantum groups is typically proven using the density conditions \eqref{quantumdensity}. Thus, the density condition involving positive definite elements can be seen as a  replacement for \eqref{quantumdensity}. 
\end{Rem}

The following definition is slightly weaker than the one presented in \cite{CV99}\footnote{The author of this paper could not make rigorous sense of the expression $(\kappa\otimes  \id)\delta(a)$ for $a\in A_0$ which appears in \cite{CV99}*{Definition 4.1 (4.5)}.}, but is much more user-friendly. To the best of our knowledge, all known examples satisfying \cite{CV99}*{Definition 4.1} also satisfy the definition presented below.

\begin{Def}[cf.\ \cite{CV99}*{Definition 4.1}]\label{complicated}
    A $C^*$-algebraic compact quantum hypergroup consists of a hypergroup structure $(A, \Delta, \epsilon,\star)$ on the unital $C^*$-algebra $A$ together with a continuous one-parameter group of $*$-automorphisms $\tau_t: A\to A$ ($t\in \R$), such that the following properties are satisfied:
    \begin{itemize}
        \item $\Delta: A\to A\otimes A$ is completely positive.
        \vspace{-1.5mm}\item The linear span of the positive definite elements is norm-dense in $A$.
        \vspace{-1.5mm}\item There exists a norm-dense $*$-subalgebra $A_0\subseteq A$  such that $A_0^\star\subseteq A_0$ and $\Delta(A_0)\subseteq A_0\odot A_0$ and such that the one-parameter group $\tau_t$ can be extended to a complex one-parameter group $\tau_z$ of algebra automorphisms of $A_0$.
       \vspace{-1.5mm} \item For all $z\in \C$, we have $\Delta\circ \tau_z= (\tau_z\odot\tau_z)\circ \Delta$ and $\varphi\circ \tau_z=\varphi$ on $A_0$.\footnote{Here, $\varphi: A\to \C$ is the Haar state, which automatically exists by the preceding assumptions (see Remark \ref{hame}).}
        \vspace{-1.5mm}\item $\varphi$ is faithful on $A_0$.
        \vspace{-1.5mm}\item There exists $z_0\in \C$ such that with $S:= *\circ \tau_{z_0}\circ \star$, we have the strong left invariance
        $$(\id \otimes \varphi)((S\odot \id)(\Delta(a))(1\otimes b)) = (\id \otimes \varphi)((1\otimes a)\Delta(b)),\quad a,b\in A_0.$$
    \end{itemize}
\end{Def}
\begin{Rem}\label{completions}
The counit in the above definitions is assumed to exist as a character on the $C^*$-algebraic level. In many situations, this requirement is too strong. For example, it excludes non-coamenable reduced compact quantum groups as examples of $C^*$-algebraic compact quantum hypergroups. When working in the reduced setting, the same problem also persists for the examples of double coset spaces $\H\backslash \G/\H$ arising from an inclusion $\H\le \G$ of compact quantum groups, whenever $\H\backslash \G$ is not coamenable in the sense of \cite{AK24}. Therefore, Definition \ref{complicated} should rather be thought of as describing a `universal' version of a $C^*$-algebraic compact quantum hypergroup. It would be desirable to come up with a definition of a $C^*$-algebraic compact quantum hypergroup with less axioms than Definition \ref{complicated}, and which is rich enough to include examples where the counit does not exist on the $C^*$-algebraic level. 
\end{Rem}

Showing that the coproduct on $\mcO(\XGX)$ extends to a ucp map on the $C^*$-algebra $C^u(\XGX)$ will follow from the following key result, for which we recall the notion of the Connes fusion tensor product of equivariant correspondences from Subsection \ref{equivariant correspondences}.

 \begin{Prop}\label{keyresult}
  Given $\mcH, \mcK\in \Corr^\G(L^\infty(\X), L^\infty(\X))$, consider $\mcH\boxtimes\mcK:= (\mcH\boxtimes_{L^\infty(\X)}\mcK, \pi_\boxtimes, \rho_\boxtimes, U_\boxtimes)$. There is a natural isometry
  $\mathcal{I}: U_\mcH(\varphi_\G)\mcH\otimes U_\mcK(\varphi_\G)\mcK\to U_{\boxtimes}(\varphi_\G)(\mcH\boxtimes\mcK).$
  It satisfies
  \begin{equation}\label{intertwining}
      \mathcal{I}^* \theta_\square^{\mcH\boxtimes \mcK}(z) \mathcal{I} = (\theta_\square^\mcH\odot \theta_\square^\mcK)\Delta_{\XGX}(z), \quad z\in \mcO(\XGX).
  \end{equation}
 \end{Prop}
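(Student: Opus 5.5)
The plan is to build $\mathcal{I}$ from the observation that $\G$-invariant vectors of an equivariant correspondence are automatically right-bounded, with scalar-valued $L^\infty(\X)$-inner products. Note first that $U_\mcH(\varphi_\G)$ is the projection onto the $\G$-invariant vectors $\{\xi\in \mcH: U_\mcH(\xi\otimes \eta)=\xi\otimes \eta \text{ for all } \eta\}$. Fix such an invariant $\xi\in U_\mcH(\varphi_\G)\mcH$. For $a\in L^\infty(\X)$, the covariance relation $(\rho_\mcH\otimes R_\G)\alpha(a)= U_\mcH^*(\rho_\mcH(a)\otimes 1)U_\mcH$ together with invariance of $\xi$ shows that $a\mapsto \langle \xi, \rho_\mcH(a)\xi\rangle$ is an $\alpha$-invariant normal functional. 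By ergodicity (uniqueness of the invariant state), this gives
\[
\langle \xi, \rho_\mcH(a)\xi\rangle=\|\xi\|^2\varphi_\X(a).
\]
Consequently the densely defined map $x_\xi: L^2(\X)\to \mcH$, $\Lambda_\X(a)\mapsto \rho_\mcH(\sigma^\X_{-i/2}(a))\xi$ (the appropriate modular twist of $J_\X a^* J_\X\xi_\X\mapsto \rho_\mcH(a)\xi$, defined first on $\mcO(\X)$), is bounded. It commutes with the right actions, so $x_\xi\in\mathscr{L}_{L^\infty(\X)}(L^2(\X),\mcH)$. Polarizing, $\langle x_\xi,x_{\xi'}\rangle_{L^\infty(\X)}$ is an invariant element of $L^\infty(\X)$, hence equals $\langle \xi,\xi'\rangle 1$. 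We also verify $U_\mcH(x_\xi\otimes 1)U_\X^*=x_\xi\otimes 1$ from the invariance of $\xi$ and the formula \eqref{formulaimplementation}.

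We then define $\mathcal{I}(\xi\otimes\eta):= x_\xi\otimes_{L^\infty(\X)}\eta$ for invariant $\xi\in\mcH$ and $\eta\in\mcK$. By the definition of the inner product on $\mcH\boxtimes\mcK$,
\[
\langle x_\xi\otimes\eta, x_{\xi'}\otimes\eta'\rangle=\langle \xi,\xi'\rangle\langle\eta,\eta'\rangle,
\]
so $\mathcal{I}$ is isometric. From the defining formula of $U_\boxtimes$ and the identity $U_\mcH(x_\xi\otimes1)U_\X^*=x_\xi\otimes 1$, we get $U_\boxtimes((x_\xi\otimes\eta)\otimes\zeta)=(x_\xi\otimes 1)\otimes U_\mcK(\eta\otimes\zeta)$. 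Invariance of $\eta$ then gives invariance of $\mathcal{I}(\xi\otimes \eta)$, so $\mathcal{I}$ lands in $U_\boxtimes(\varphi_\G)(\mcH\boxtimes\mcK)$.

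For \eqref{intertwining}, it suffices to take $z=Z_\pi(\mu,\nu)$ with $\pi\in\Irr(\G)$ and use \eqref{usethis}. We compute matrix elements
\[
\langle \mathcal{I}(\xi'\otimes\eta'),\theta_\square^{\mcH\boxtimes\mcK}(z)\mathcal{I}(\xi\otimes\eta)\rangle=\sum_j \big\langle\eta', \pi_\mcK(a_j)\,\rho_\mcK(X_\pi(\delta_\X^{-1/4}\nu,\delta_\G^{-1/4}e_j^\pi)^*)\eta\big\rangle,
\]
where $a_j:=\langle x_{\xi'},\pi_\mcH(X_\pi(\mu,e_j^\pi))x_\xi\rangle_{L^\infty(\X)}$. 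The key step is to identify the $a_j$. Because $x_\xi$ and $x_{\xi'}$ are equivariant and $X_\pi(\mu,-)\in \Mor(\pi,\X)$, the map $e_j^\pi\mapsto a_j$ again lies in $\Mor(\pi,\X)$. By the anti-linear bijection $\mcG_\pi\cong\Mor(\pi,\X)$, it is therefore of the form $X_\pi(\nu',-)$ for a unique $\nu'\in\mcG_\pi$. Expanding $\nu'=\sum_k c_k f_k^\pi$, the coefficients are $\overline{c_k}=\langle \xi', \theta^\mcH_\square(Z_\pi(\mu,f_k^\pi))\xi\rangle$. This is obtained by applying $\varphi_\X$ against the appropriate matrix coefficient and using the orthogonality relations \eqref{orthogonality1}--\eqref{orthogonality2} together with $\langle\xi',\rho_\mcH(b)\xi\rangle=\langle\xi',\xi\rangle\varphi_\X(b)$-type identities (polarized as above). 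Substituting back, the sum over $j$ becomes $\sum_k \overline{c_k}\langle\eta',\theta^\mcK_\square(Z_\pi(f_k^\pi,\nu))\eta\rangle$, which is exactly the matrix element of $(\theta^\mcH_\square\odot\theta^\mcK_\square)\Delta_{\XGX}(z)$.

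I expect this identification of $a_j$ to be the main obstacle: it requires computing $\langle x_{\xi'},\pi_\mcH(\cdot)x_\xi\rangle$ concretely and tracking the modular twists $\delta_\X^{\pm1/4}$ and $\delta_\G^{\pm1/4}$ so that they match \eqref{usethis}. A second, smaller point needing care is the rigorous boundedness of $x_\xi$ and the correct modular twist in its definition.
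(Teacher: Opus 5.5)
Your proposal is correct and is essentially the paper's proof. It uses the same isometry $\mathcal{I}(\xi\otimes\eta)=L_\xi^{\varphi_\X}\otimes_{L^\infty(\X)}\eta$, scalar inner products by ergodicity, and invariance of the image read off from $U_\boxtimes$, and its key step is the same as the paper's: $\zeta\mapsto\langle x',\pi_\mcH(X_\pi(\mu,\zeta))x\rangle_{L^\infty(\X)}$ is identified with $X_\pi(\mu^{x',x},\zeta)$ via $\Mor(\pi,\X)\cong\mcG_\pi$ and then evaluated with \eqref{orthogonality1}.

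The one local difference concerns left-boundedness and equivariance of $L_\xi^{\varphi_\X}$. You obtain both for \emph{every} invariant $\xi$ directly from uniqueness of the $\alpha$-invariant state, indeed $\|\rho_\mcH(a)\xi\|=\|\xi\|\,\|\Lambda_\X(a^*)\|$, so no density or extension step is needed. The paper instead averages with $\mathbb{E}$ and works on the dense subspace $U_\mcH(\varphi_\G)\mathfrak{D}(\mcH,\varphi_\X)$. One small correction: with the paper's conventions the explicit twist is $\Lambda_\X(b)\mapsto\rho_\mcH(\sigma^\X_{i/2}(b))\xi$, not $\sigma^\X_{-i/2}$.
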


 \begin{proof} Let us write $\mathfrak{D}(\mcH, \varphi_\X):= \{\xi \in \mcH\mid \exists C\ge 0: \forall x\in L^\infty(\X):\|\rho_\mcH(x)\xi\|\le C\|\Lambda_\X(x^*)\|\}$ for the norm-dense subspace of $\mcH$ of \emph{left-bounded} vectors \cite{Tak03}*{IX.3 Lemma 3.3}. Each $\xi \in \mathfrak{D}(\mcH, \varphi_\X)$ induces an operator $L_\xi^{\varphi_\X}\in \mathscr{L}_{L^\infty(\X)}(L^2(\X), \mcH)$ defined by $L_\xi^{\varphi_\X}(J_\X \Lambda_\X(x^*))= \rho_\mcH(x)\xi$ for $x\in L^\infty(\X)$.

Consider the conditional expectation
    $$\mathbb{E}: \mathscr{L}_{L^\infty(\X)}(L^2(\X), \mcH)\to \mathscr{L}_{L^\infty(\X)}^{\G}(L^2(\X), \mcH): x\mapsto (\id \otimes \varphi_\G)(U_\mcH(x\otimes 1)U_\X^*).$$ 
    Fix $\xi \in \mathfrak{D}(\mcH, \varphi_\X)$.
     We compute for $a\in \mcO(\X)$ and $\eta \in \mcH$ that
    \begin{align*}
        \langle \eta, \mathbb{E}(L_\xi^{\varphi_\X})J_\X \Lambda_\X(a^*)\rangle &= \langle \eta \otimes \xi_\G, U_\mcH(L_\xi^{\varphi_\X}\otimes 1)U_\X^*(J_\X\Lambda_\X(a^*)\otimes \xi_\G)\rangle\\
        &= \langle \eta \otimes \xi_\G, U_\mcH(L_\xi^{\varphi_\X}\otimes 1) U_\X^*(J_\X \otimes J_{\hat{\G}})(\Lambda_\X(a^*)\otimes \xi_\G)\rangle\\
        &= \langle \eta \otimes \xi_\G, U_\mcH(L_\xi^{\varphi_\X}\otimes 1)(J_\X\otimes J_{\hat{\G}})U_\X(\Lambda_\X(a^*)\otimes \xi_\G)\rangle\\
        &= \langle \eta \otimes \xi_\G, U_\mcH(L_\xi^{\varphi_\X}\otimes 1)(J_\X\otimes J_{\hat{\G}})(\Lambda_\X(a_{(0)}^*)\otimes \Lambda_\G(a_{(1)}^*))\rangle\\
        &= \langle \eta \otimes \xi_\G, U_\mcH(\rho_\mcH(a_{(0)})\xi \otimes R_\G(a_{(1)})\xi_\G\rangle\\
        &= \langle \eta \otimes \xi_\G, (\rho_\mcH(a)\otimes 1)U_\mcH(\xi \otimes \xi_\G)\rangle \\
        &= \langle \eta, \rho_\mcH(a)U_\mcH(\varphi_\G)\xi\rangle,
    \end{align*}
    where we used \eqref{unitaryflip} in the third equality. It follows that
    $$\mathbb{E}(L_\xi^{\varphi_\X})J_\X\Lambda_\X(a^*)= \rho_\mcH(a)U_\mcH(\varphi_\G)\xi, \quad a \in L^\infty(\X).$$
Thus, we conclude that  $U_\mcH(\varphi_\G)\xi \in \mathfrak{D}(\mcH,\varphi_\X)$, and that $L_{U_\mcH(\varphi_\G)\xi}^{\varphi_\X}= \mathbb{E}(L_\xi^{\varphi_\X})\in \mathscr{L}_{L^\infty(\X)}^\G(L^2(\X), \mcH)$.

 If $x\in \mathscr{L}_{L^\infty(\X)}^\G(L^2(\X), \mcH)$, $y\in \mathscr{L}_{L^\infty(\X)}(L^2(\X), \mcH)$, $\eta\in U_\mcK(\varphi_\G)\mcK$ and $\zeta\in \mcK$, we calculate
 \begin{align*}
     \langle y\otimes_{L^\infty(\X)} \zeta, U_\boxtimes(\varphi_\G)(x\otimes_{L^\infty(\X)}\eta)\rangle &= \langle (y\otimes_{L^\infty(\X)} \zeta)\otimes \xi_\G, U_\boxtimes((x\otimes_{L^\infty(\X)}\eta)\otimes \xi_\G)\rangle\\
     &= \langle (y\otimes 1)\otimes_{L^\infty(\X)\ovot L^\infty(\G)} (\zeta\otimes \xi_\G), (x\otimes 1)\otimes_{L^\infty(\X)\ovot L^\infty(\G)} U_\mcK(\eta \otimes \xi_\G)\rangle\\
     &= \langle \zeta \otimes \xi_\G, (\pi_\mcK(\langle y, x\rangle_{L^\infty(\X)})\otimes 1) U_\mcK(\eta\otimes \xi_\G)\rangle\\
     &= \langle \zeta, \pi_\mcK(\langle y,x\rangle_{L^\infty(\X)})\eta\rangle,
 \end{align*}
 where the second equality uses that $x$ is intertwiner of $\G$-representations and the last equality uses that $\eta \in U_\mcK(\varphi_\G)\mcK$. We conclude that $x\otimes_{L^\infty(\X)}\eta\in U_{\boxtimes}(\varphi_\G)(\mcH\boxtimes\mcK)$ if $x\in \mathscr{L}_{L^\infty(\X)}^\G(L^2(\X), \mcH)$ and $\eta\in U_\mcK(\varphi_\G)\mcK$. Further, if $x,y \in \mathscr{L}^\G_{L^\infty(\X)}(L^2(\X), \mcH)$, we have
\begin{align*}
    (\pi_\X\otimes  \id)\alpha(\langle x,y\rangle_{L^\infty(\X)})&= U_\X(x^*y \otimes 1)U_\X^*= x^*y\otimes 1= \pi_\X(\langle x,y\rangle_{L^\infty(\X)})\otimes 1,
\end{align*}
so that $\langle x,y\rangle_{L^\infty(\X)}\in \C$ by the ergodicity of $\alpha$. We then obtain a unique isometry $\mathcal{I}: U_\mcH(\varphi_\G)\mcH\otimes U_\mcK(\varphi_\G)\mcK\to U_{\boxtimes}(\varphi_\G)(\mcH\boxtimes\mcK)$ such that
 $$\mathcal{I}(\xi \otimes \eta) = L_\xi^{\varphi_\X}\otimes_{L^\infty(\X)} \eta, \quad \xi \in U_\mcH(\varphi_\G)\mathfrak{D}(\mcH, \varphi_\X), \quad \eta \in U_\mcK(\varphi_\G)\mcK.$$

Let us now fix $\xi, \xi'\in U_{\mcH}(\varphi_\G)\mathfrak{D}(\mcH, \varphi_\X)$ and $\eta, \eta'\in U_{\mcK}(\varphi_\G)\mcK$. Put $x:= L_{\xi}^{\varphi_\X}$ and $x':= L_{\xi'}^{\varphi_\X}$. Given $\pi\in \Irr(\G)$, $\mu\in\mcG_\pi$ and $\zeta\in \mcH_\pi$, we compute
 \begin{align*}
        (\pi_\X\otimes \id)\alpha(\langle x', \pi_\mcH(X_\pi(\mu, \zeta)) x\rangle_{L^\infty(\X)})&= U_\X((x')^*\pi_\mcH(X_\pi(\mu, \zeta))x\otimes 1)U_\X^*\\
        &= ((x')^*\otimes 1)U_\mcH(\pi_\mcH(X_\pi(\mu, \zeta))\otimes 1)U_\mcH^*(x\otimes 1)\\
        &= \sum_{j=1}^{d_\pi}(x')^*\pi_\mcH(X_\pi(\mu, e_j^\pi)) x \otimes U_\pi(e_j^\pi, \zeta)\\
        &= \sum_{j=1}^{d_\pi}\pi_\X(\langle x', \pi_\mcH(X_\pi(\mu, e_j^\pi)) x\rangle_{L^\infty(\X)})\otimes U_\pi(e_j^\pi, \zeta), 
    \end{align*}
    so that the map $\mcH_\pi\ni \zeta\mapsto \langle x', \pi_\mcH(X_\pi(\mu, \zeta)) x\rangle_{L^\infty(\X)}\in L^\infty(\X)$ defines an element of $\operatorname{Mor}(\pi, \X)$. Consequently, there exists a unique $\mu^{x',x}\in \mcG_\pi$ such that $\langle x', \pi_\mcH(X_\pi(\mu, \zeta)) x\rangle_{L^\infty(\X)} =  X_\pi(\mu^{x',x},\zeta)$ for all $\zeta \in \mcH_\pi$. If then also $\nu \in \mcG_\pi$, we compute (use \eqref{usethis} for the second and seventh equality and use \eqref{orthogonality1} for the sixth equality):
\begin{align*}
    &\langle \xi'\otimes \eta', (\theta_\square^\mcH \odot \theta_\square^\mcK)(\Delta_{\XGX}(Z_\pi(\mu, \nu)))(\xi\otimes \eta)\rangle\\
    = &\sum_{j=1}^{m_\pi}\langle \xi', \theta_\square^\mcH(Z_\pi(\mu, f_j^\pi))\xi\rangle \langle \eta', \theta_\square^\mcK(Z_\pi(f_j^\pi, \nu))\eta\rangle\\
    = &\sum_{j=1}^{m_\pi}\sum_{k=1}^{d_\pi} \langle \xi', \pi_\mcH(X_\pi(\mu, e_k^\pi))\rho_\mcH(X_\pi(\delta_\X^{-1/4}f_j^\pi, \delta_\G^{-1/4}e_k^\pi)^*)\xi\rangle \langle \eta', \theta_\square^\mcK(Z_\pi(f_j^\pi, \nu))\eta\rangle\\
    = &\sum_{j=1}^{m_\pi}\sum_{k=1}^{d_\pi}\langle x'\xi_\X, \pi_\mcH(X_\pi(\mu, e_k^\pi)) xJ_\X \Lambda_\X(X_\pi(\delta_\X^{-1/4}f_j^\pi, \delta_\G^{-1/4}e_k^\pi))\rangle\langle \eta', \theta_\square^\mcK(Z_\pi(f_j^\pi, \nu))\eta\rangle\\
    = &\sum_{j=1}^{m_\pi}\sum_{k=1}^{d_\pi}\langle \xi_\X, \pi_\X(\langle x', \pi_\mcH(X_\pi(\mu, e_k^\pi))x\rangle_{L^\infty(\X)} )\Lambda_\X(X_\pi(f_j^\pi, e_k^\pi)^*)\rangle\langle \eta', \theta_\square^\mcK(Z_\pi(f_j^\pi, \nu))\eta\rangle\\
    = &\sum_{j=1}^{m_\pi}\sum_{k=1}^{d_\pi}\varphi_\X(X_\pi(\mu^{x',x}, e_k^\pi)X_\pi(f_j^\pi, e_k^\pi)^*)\langle \eta', \theta_\square^\mcK(Z_\pi(f_j^\pi, \nu))\eta\rangle\\
    = & \langle \eta', \theta_\square^\mcK(Z_\pi(\mu^{x',x}, \nu))\eta\rangle\\
    = & \sum_{k=1}^{d_\pi} \langle \eta', \pi_\mcK(X_\pi(\mu^{x',x}, e_k^\pi))\rho_\mcK(X_\pi(\delta_\X^{-1/4}\nu, \delta_\G^{-1/4}e_k^\pi)^*)\eta\rangle\\
    = & \sum_{k=1}^{d_\pi}\langle \eta', \pi_\mcK(\langle x', \pi_\mcH(X_\pi(\mu, e_k^\pi))x\rangle_{L^\infty(\X)})\rho_\mcK(X_\pi(\delta_\X^{-1/4}\nu, \delta_\G^{-1/4}e_k^\pi)^*)\eta\rangle\\
    =& \sum_{k=1}^{d_\pi} \langle x'\otimes_{L^\infty(\X)} \eta', \pi_\mcH(X_\pi(\mu, e_k^\pi))x\otimes_{L^\infty(\X)} \rho_\mcK(X_\pi(\delta_\X^{-1/4}\nu, \delta_\G^{-1/4}e_k^\pi)^*)\eta\rangle\\
    =&\sum_{k=1}^{d_\pi} \langle x'\otimes_{L^\infty(\X)} \eta', \pi_\boxtimes(X_\pi(\mu, e_k^\pi))\rho_\boxtimes(X_\pi(\delta_\X^{-1/4}\nu, \delta_\G^{-1/4}e_k^\pi)^*)(x\otimes_{L^\infty(\X)} \eta)\rangle\\
    =& \langle \xi'\otimes \eta', \mathcal{I}^*\theta_\square^{\mcH\boxtimes \mcK}(Z_\pi(\mu, \nu)) \mathcal{I}(\xi \otimes \eta)\rangle.
\end{align*}

From this, we conclude that \eqref{intertwining} holds.
 \end{proof}

Showing that the unitary antipode $R_{\XGX}: \mcO(\XGX)\to \mcO(\XGX)$ extends to the universal level will follow from the following result. Recall the construction of the conjugate equivariant correspondence from Subsection \ref{equivariant correspondences}.

\begin{Prop} Given $\mcH \in \Corr^\G(L^\infty(\X), L^\infty(\X))$, we have
\begin{equation}\label{unitaryantipode}
    \|\theta_\square^{\overline{\mcH}}(z)\| = \|\theta_\square^\mcH(R_{\XGX}(z))\|, \quad z\in \mcO(\XGX).
\end{equation}
\end{Prop}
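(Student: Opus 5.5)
Let $C:=C_\mcH:\mcH\to\overline{\mcH}$ be the canonical anti-unitary. The plan is to show that $C$ maps the fixed-point space $U_\mcH(\varphi_\G)\mcH$ onto $U_{\overline{\mcH}}(\varphi_\G)\overline{\mcH}$, and that under this anti-unitary identification
\[
\theta_\square^{\overline{\mcH}}(z)=C\,\theta_\square^\mcH(R_{\XGX}(z))^*\,C^*,\qquad z\in \mcO(\XGX),
\]
or a similar formula, possibly with $z$ replaced by $z^*$. Since $\|CTC^*\|=\|T\|=\|T^*\|$, this immediately gives \eqref{unitaryantipode}.

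\textbf{Step 1: fixed vectors.} By definition, $\overline{U}=(C\otimes J_{\hat\G})U_\mcH^*(C^*\otimes J_{\hat\G})$. Applying $\id\otimes\varphi_\G$ and using $\varphi_\G(J_{\hat\G}yJ_{\hat\G})=\overline{\varphi_\G(y^*)}$, equivalently $\varphi_\G\circ R_\G=\varphi_\G$, we get
\[
U_{\overline{\mcH}}(\varphi_\G)=C\,U_\mcH(\varphi_\G)^*C^*=C\,U_\mcH(\varphi_\G)C^*.
\]
Here we use that $U_\mcH(\varphi_\G)$ is the self-adjoint projection onto the invariant vectors. Hence $C$ restricts to an anti-unitary between the two corners.

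\textbf{Step 2: matrix coefficients.} By linearity it suffices to treat $z=Z_\pi(\mu,\nu)$ with $\pi\in\Irr(\G)$. Using \eqref{usethis} for $\overline{\mcH}$, together with $\overline\pi(x)=C\rho_\mcH(x^*)C^*$ and $\overline\rho(y)=C\pi_\mcH(y^*)C^*$, we obtain
\[
\theta_\square^{\overline{\mcH}}(Z_\pi(\mu,\nu))=C\Big(\sum_{j}\rho_\mcH(X_\pi(\mu,e_j^\pi)^*)\,\pi_\mcH(X_\pi(\delta_\X^{-1/4}\nu,\delta_\G^{-1/4}e_j^\pi))\Big)C^*.
\]
The two factors commute, so taking the adjoint inside gives
\[
C\Big(\sum_j \pi_\mcH(X_\pi(\delta_\X^{-1/4}\nu,\delta_\G^{-1/4}e_j^\pi)^*)\,\rho_\mcH(X_\pi(\mu,e_j^\pi))\Big)^*C^*.
\]
Next compute $\theta_\square^\mcH(R_{\XGX}(Z_\pi(\mu,\nu)))=\theta_\square^\mcH(Z_\pi(\delta_\X^{-1/4}\nu,\delta_\X^{1/4}\mu))^*$ via \eqref{usethis}. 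This equals
\[
\Big(\sum_j\pi_\mcH(X_\pi(\delta_\X^{-1/4}\nu,e_j^\pi))\,\rho_\mcH(X_\pi(\mu,\delta_\G^{-1/4}e_j^\pi)^*)\Big)^*.
\]
The remaining task is to match the two inner sums. Three tools are available: the basis-independence of $\sum_j e_j\otimes\bar e_j$, which lets $\delta_\G^{-1/4}$ be moved between the two legs (it is positive, so $\sum_j \delta_\G^{-1/4}e_j\otimes \overline{e_j}=\sum_j e_j\otimes\overline{\delta_\G^{-1/4}e_j}$); the conjugation formula of Proposition~\ref{1}(3), which rewrites $X_\pi(\cdot,\cdot)^*$ as $X_{\bar\pi}$-coefficients; and the fact that $\rho_\mcH(X^*)$ versus $\rho_\mcH(X)$ reflects the antimultiplicativity of $\rho$.

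I expect the main obstacle to be getting the $\delta$-powers to agree exactly. Some discrepancy, such as $\delta_\G^{\pm1/2}$ or $\delta_\X^{\pm1/2}$, may remain, which would mean that $\theta_\square^{\overline{\mcH}}$ corresponds to $R_{\XGX}$ composed with a scaling automorphism $\tau^{\XGX}_{t}$ or to $S_{\XGX}$ instead. If so, I would first recheck the normalisation in \eqref{usethis}. If a genuine imaginary-time shift remains, I would absorb it by noting that the pairing is against vectors in the fixed-point corner. On the corner $U_\mcH(\varphi_\G)\mcH$, the representation $\theta^\mcH_\square$ factors through the cut-down by $\varphi_\G$, where $\varphi_\G\chi\varphi_\G$ for $\chi=\delta_\G^{z}$ acts trivially. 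Via Proposition~\ref{corner} (the identity $\kappa(E(\cdot))=\varphi_\G\cdot\varphi_\G$), such $\delta_\G$-shifts can then be moved across. Once the operator identity above is established on matrix coefficients, \eqref{unitaryantipode} follows.
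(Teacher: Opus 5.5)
Your plan is the paper's proof: identify the corners through $U_{\overline{\mcH}}(\varphi_\G)=C_\mcH U_\mcH(\varphi_\G)C_\mcH^*$, then establish $\theta_\square^{\overline{\mcH}}(z)=C_\mcH\,\theta_\square^\mcH(R_{\XGX}(z))^*\,C_\mcH^*$ on matrix coefficients via \eqref{usethis}. The matching you left open closes exactly, with no residual $\delta$-twist: after commuting $\pi_\mcH$ past $\rho_\mcH$, the operators $C_\mcH^*\theta_\square^{\overline{\mcH}}(Z_\pi(\mu,\nu))C_\mcH$ and $\theta_\square^\mcH(Z_\pi(\delta_\X^{-1/4}\nu,\delta_\X^{1/4}\mu))=\theta_\square^\mcH(R_{\XGX}(Z_\pi(\mu,\nu)))^*$ are the two sides of
\[
\sum_{j}\pi_\mcH\bigl(X_\pi(\delta_\X^{-1/4}\nu,\delta_\G^{-1/4}e_j^\pi)\bigr)\rho_\mcH\bigl(X_\pi(\mu,e_j^\pi)^*\bigr)=\sum_{j}\pi_\mcH\bigl(X_\pi(\delta_\X^{-1/4}\nu,e_j^\pi)\bigr)\rho_\mcH\bigl(X_\pi(\mu,\delta_\G^{-1/4}e_j^\pi)^*\bigr),
\]
which is exactly your first tool, $\sum_j\delta_\G^{-1/4}e_j^\pi\otimes\overline{e_j^\pi}=\sum_j e_j^\pi\otimes\overline{\delta_\G^{-1/4}e_j^\pi}$. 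So you can drop the hedging and the fallback paragraph. (One minor slip in Step 1: the correct identity is $\varphi_\G(J_{\hat{\G}}yJ_{\hat{\G}})=\varphi_\G(y^*)$, without the complex conjugation, though your conclusion there is right.)
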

\begin{proof} We have $\overline{U_\mcH}(\varphi_\G)= C_\mcH U_\mcH(\varphi_\G)^*C_\mcH = C_\mcH U_\mcH(\varphi_\G) C_\mcH^*$, so it is clear that $\overline{U_\mcH}(\varphi_\G)\overline{\mcH}= \overline{U_\mcH(\varphi_\G)\mcH}$.

Given $\xi\in U_\mcH(\varphi_\G)\mcH$, a standard computation using matrix coefficients and \eqref{usethis} shows that
$$\theta_\square^{\overline{\mcH}}(z)\overline{\xi}= \overline{\theta^\mcH_\square(R_{\XGX}(z))^*\xi}, \quad z\in \mcO(\XGX).$$
    Equality \eqref{unitaryantipode} then immediately follows.
\end{proof}

 \begin{Theorem}\label{mainCV} The following properties hold:
 \begin{enumerate}
     \item  The counit $\epsilon_{\XGX}: \mcO(\XGX)\to \C$ extends to a character $\epsilon_{\XGX}^u: C^u(\XGX)\to \C$.
  \item\vspace{-1.5mm} The coproduct $\Delta_{\XGX}: \mcO(\XGX)\to \mcO(\XGX)\odot \mcO(\XGX)$ extends to a ucp map $\Delta_{\XGX}^u: C^u(\XGX)\to C^u(\XGX)\otimes C^u(\XGX)$.
\item\vspace{-1.5mm} Given $t\in \R$, the $*$-automorphism $\tau^{\XGX}_t: \mcO(\XGX)\to \mcO(\XGX)$ extends to a $*$-automorphism $\tau_{t}^{\XGX,u}: C^u(\XGX)\to C^u(\XGX)$.
\item\vspace{-1.5mm} The unitary antipode $R_{\XGX}: \mcO(\XGX)\to \mcO(\XGX)$ extends to an anti-$*$-isomorphism $R_{\XGX}^u: C^u(\XGX)\to C^u(\XGX)$.
\item\vspace{-1.5mm} Writing $a^\star:= R_{\XGX}^u(a)^*$ for $a\in C^u(\XGX)$, the data $(C^u(\XGX), \Delta_{\XGX}^u, \epsilon_{\XGX}^u, \star, \tau_t^{\XGX,u})$ defines a $C^*$-algebraic compact quantum hypergroup in the sense of Definition \ref{complicated}.
     \end{enumerate}
 \end{Theorem}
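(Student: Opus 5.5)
Each of (1)--(4) reduces to a norm estimate against the formula \eqref{cornernorm}, $\|z\|_u=\sup_{\mcH}\|\theta_\square^\mcH(z)\|$, taken over all $\mcH\in\Corr^\G(L^\infty(\X),L^\infty(\X))$.

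For (1), I use the trivial correspondence $L^2(\X)=(L^2(\X),\pi_\X,\rho_\X,U_\X)$. Here $U_\X(\varphi_\G)L^2(\X)=\C\xi_\X$ by ergodicity. From \eqref{usethis} and the orthogonality relation \eqref{orthogonality1} I compute $\langle \xi_\X,\theta_\square^{L^2(\X)}(Z_\pi(\mu,\nu))\xi_\X\rangle=\langle\mu,\nu\rangle$. Thus $\epsilon_{\XGX}=\theta_\square^{L^2(\X)}$ is a one-dimensional representation, and it is $\|\cdot\|_u$-bounded.

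For (2), I apply Proposition \ref{keyresult}. Take a correspondence $\mcH$ whose $\theta_\square^\mcH$ is faithful on $C^u(\XGX)$, for instance a large direct sum. Then $\theta_\square^\mcH\otimes\theta_\square^\mcH$ is faithful on the minimal tensor product $C^u\otimes C^u$. By \eqref{intertwining}, $(\theta_\square^\mcH\odot\theta_\square^\mcH)\Delta_{\XGX}(z)=\mathcal I^*\theta_\square^{\mcH\boxtimes\mcH}(z)\mathcal I$. The right-hand side is a compression of a $*$-representation, so it is completely bounded with respect to $\|\cdot\|_u$ and completely positive. It is also unital, since $\mathcal I$ is an isometry and $\Delta(1)=1\otimes1$. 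Hence $\Delta_{\XGX}$ extends to a ucp map into $C^u\otimes C^u$. Coassociativity and $*$-preservation pass to the extension by density.

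For (3), I check that $\tau_t$ is isometric for $\|\cdot\|_u$. The cleanest route is to realize $\tau_t$ through an automorphism of $C^u_0(\XrX)$ that fixes $\varphi_\G$. My first attempt is $x\chi\bar y\mapsto\sigma^\X_t(x)\,\chi(\delta_\G^{it}\,\cdot\,\delta_\G^{-it})\,\overline{\sigma^{\X}_t(y)}$, or a variant obtained by twisting a correspondence $\mcH$ by the modular groups. On the correspondence side, $\theta^\mcH_\square\circ\tau_t$ should equal $\operatorname{Ad}$ of a unitary composed with $\theta^{\mcH_t}_\square$ for a twisted correspondence $\mcH_t$. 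This gives $\|\tau_t(z)\|_u\le\|z\|_u$, and equality follows by applying the same bound with $-t$. I expect this step to be the main technical obstacle, since one has to verify on matrix coefficients that the twist respects the commutation rules of $\mcO(\XrX)$. Strong continuity of $t\mapsto\tau^u_t(z)$ is then clear on $\mcO(\XGX)$ and extends by uniform boundedness.

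For (4), the preceding proposition gives $\|\theta_\square^\mcH(R_{\XGX}(z))\|=\|\theta_\square^{\overline\mcH}(z)\|\le\|z\|_u$. Since $R_{\XGX}^2=\id$, the map $R_{\XGX}$ is an isometric anti-$*$-isomorphism and extends.

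For (5), I verify the axioms of Definition \ref{complicated} with $A_0=\mcO(\XGX)$. The map $\star$ is anti-linear, multiplicative and involutive because $R$ is an anti-$*$-automorphism with $R^2=\id$. The identity $\Delta^{\op}\circ\star=(\star\otimes\star)\Delta$ is checked on the elements $Z_\pi(\mu,\nu)$ using Proposition \ref{3}(3). Positive-definiteness, and hence density of the positive definite elements, comes from the elements $\sum_{j,k}\lambda_j\bar\lambda_k Z_\pi(f_j,f_k)$. More precisely, $(\omega\otimes\omega^+)\Delta(Z_\pi(\mu,\mu))$ is a positive sum of the form $\sum_j|\omega(Z_\pi(\mu,f_j))|^2$, which follows from the $\star$-formula. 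These elements span $\mcO(\XGX)$ by polarization. The Haar state is then the extension of $\varphi_{\XGX}$, because invariance holds algebraically and the Haar state is unique. The remaining axioms are all algebraic: invariance of $\varphi$ under $\tau_z$, the relation $\Delta\circ\tau_z=(\tau_z\odot\tau_z)\Delta$, faithfulness of $\varphi$ on $A_0$ (from \eqref{orthogonality6}), and strong left invariance with $S=*\circ\tau_{-i/2}\circ\star=R\circ\tau_{-i/2}$ (Theorem \ref{hypergroup}). These were stated in the algebraic section.
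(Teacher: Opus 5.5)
Your proofs of (1), (2) and (4) follow the paper. They use the trivial correspondence, the isometry $\mathcal{I}$ of Proposition \ref{keyresult} applied to a correspondence on which $\theta_\square$ is faithful, and the conjugate correspondence together with $R_{\XGX}^2=\id$. Your strategy for (3) is also the paper's. However, two concrete steps fail as written.

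In (3), the map you propose does not in general respect the commutation rules of $\mcO(\XrX)$, because those rules force the middle twist. From \eqref{actionmatrixcoeff}, \eqref{modularanalytic} and \eqref{cc} one gets $\alpha\circ\sigma_t^\X=(\sigma_t^\X\odot\tau_{-t}^\G)\circ\alpha$. Consequently $\chi x=x_{(0)}\chi(x_{(1)}-)$ and $\bar y\chi=\chi(-R_\G(y_{(1)})^*)\overline{y_{(0)}}$ are respected by the twist $\chi\mapsto\chi\circ\tau_t^\G=\delta_\G^{-it/2}\chi\,\delta_\G^{it/2}$. Your $\chi(\delta_\G^{it}\,\cdot\,\delta_\G^{-it})$ amounts to $\chi\circ\tau^\G_{\pm 2t}$, depending on how it is read, and this breaks the rules when $\G$ is not of Kac type. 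With this correction your modular variant works:
\begin{itemize}
\item it fixes $\varphi_\G$, since $\varphi_\G\circ\tau^\G_t=\varphi_\G$;
\item it extends to $C_0^u(\XrX)$ by universality;
\item $\sigma_t^\X\odot\overline{\sigma_t^\X}$ sends $Z_\pi(\mu,\nu)$ to $Z_\pi(\delta_\X^{it/2}\mu,\delta_\X^{it/2}\nu)$, so a check on \eqref{kappamatrix} shows it intertwines $\kappa$ with $\tau_t^{\XGX}$.
\end{itemize}
The paper instead uses the scaling groups $\tau_t^\X$, $\tau_t^{\hat{\G}}=(\cdot)\circ\tau_{-t}^\G$ and $\tau_t^{\bar{\X}}$, based on $\alpha\circ\tau_t^\X=(\tau_t^\X\odot\tau_t^\G)\circ\alpha$. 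Both automorphisms restrict to the same map on the corner $\kappa(\mcO(\XGX))$.

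In (5), the elements $Z_\pi(\mu,\mu)$ are not positive definite in general. We have $Z_\pi(\mu,\nu)^\star=Z_\pi(\delta_\X^{-1/4}\nu,\delta_\X^{1/4}\mu)$. Taking a $\delta_\X$-eigenbasis $\delta_\X f_j^\pi=\delta_{\X,j}f_j^\pi$ then gives
\[
(\omega\otimes\omega^+)\Delta_{\XGX}(Z_\pi(\mu,\mu))=\sum_{j=1}^{m_\pi}\delta_{\X,j}^{1/4}\,\omega(Z_\pi(\mu,f_j^\pi))\,\overline{\omega(Z_\pi(\delta_\X^{-1/4}\mu,f_j^\pi))}.
\]
This equals your $\sum_j|\omega(Z_\pi(\mu,f_j^\pi))|^2$ only if $\delta_\X^\pi=1$. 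Take $\mu=f_1^\pi+f_2^\pi$ with $\delta_{\X,1}\neq\delta_{\X,2}$, which happens for $\X=\G$ non-Kac. Then a functional prescribed on the span of the $Z_\pi(f_i^\pi,f_k^\pi)$ and extended by Hahn--Banach makes the expression negative.

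The elements that work are $Z_\pi(\delta_\X^{-1/4}\mu,\mu)$. For these the sum is $\sum_j\delta_{\X,j}^{1/4}|\omega(Z_\pi(\delta_\X^{-1/4}\mu,f_j^\pi))|^2\geq 0$. Polarizing $(\mu,\nu)\mapsto Z_\pi(\delta_\X^{-1/4}\mu,\nu)$ and using invertibility of $\delta_\X$ then gives density.

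Finally, the identity $*\circ\tau_{-i/2}\circ\star=R_{\XGX}\circ\tau_{-i/2}$ is false. In fact $*\circ\tau_z\circ\star=\tau_{\bar z}\circ R_{\XGX}$, so your choice $z_0=-i/2$ gives $R_{\XGX}\circ\tau_{i/2}=S_{\XGX}^{-1}$. You need $z_0=i/2$, as in the paper, to get $S_{\XGX}=R_{\XGX}\circ\tau_{-i/2}$.
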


 \begin{proof} (1) Given the trivial $\G$-$L^\infty(\X)$-$L^\infty(\X)$-correspondence $L^2(\X)$, we observe that $U_\X(\varphi_\G)L^2(\X)= \C \xi_\X\cong \C$. Therefore, using the formula \eqref{usethis}, we compute for $\pi \in \Rep(\G)$ and $\mu, \nu \in \mcG_\pi$ that
\begin{align*}
    \theta_\square^{L^2(\X)}(Z_\pi(\mu, \nu)) \xi_\X &= \sum_{j=1}^{d_\pi} \pi_\X(X_\pi(\mu, e_j^\pi)) \Lambda_\X(\sigma_{i/2}^\X(X_\pi(\delta_\X^{-1/4}\nu, \delta_\G^{-1/4}e_j^\pi))^*)\\
    &= \sum_{j=1}^{d_\pi} \Lambda_\X(X_\pi(\mu, e_j^\pi)X_\pi(\nu, e_j^\pi)^*) = \langle \mu, \nu\rangle \xi_\X,
\end{align*}
so that 
$\theta_\square^{L^2(\X)}: \mcO(\XGX)\to \C$ coincides with the counit $\epsilon_{\XGX}: \mcO(\XGX)\to \C$. It then follows from \eqref{cornernorm} that $\epsilon_{\XGX}: \mcO(\XGX)\to \C$ is bounded with respect to $\|\cdot\|_u$.
 
 (2) Consider a universal $\G$-$L^\infty(\X)$-$L^\infty(\X)$ correspondence $\mcH_u$, in the sense that $\mcH_u$ contains a unitary copy of every cyclic $\G$-$L^\infty(\X)$-$L^\infty(\X)$-correspondence \cite{DCDR24}*{Section 2}, \cite{DR25d}*{Proposition 2.3.4}. Then for $z\in \mcO(\XGX)$, we have $\|z\|_u = \|\theta^{\mcH_u}_\square(z)\|$. Thus, $\theta_\square^{\mcH_u}: C^u(\XGX)\to B(U_{\mcH_u}(\varphi_\G)\mcH_u)$ is a faithful $*$-representation. Consider the natural isometry $\mathcal{I}: U_{\mcH_u}(\varphi_\G)\mcH_u\otimes U_{\mcH_u}(\varphi_\G)\mcH_u\to U_{\mcH_u\boxtimes  \mcH_u}(\varphi_\G)(\mcH_u\boxtimes \mcH_u)$ from Proposition \ref{keyresult}. Given $z\in \mcO(\XGX)$, making use of a standard property of the minimal $C^*$-algebra tensor product, as well as using \eqref{intertwining}, we find
     \begin{align*}
         \|\Delta_{\XGX}(z)\|_{C^u(\XGX)\otimes C^u(\XGX)}&=\|(\theta_{\square}^{\mcH_u}\odot \theta_\square^{\mcH_u})\Delta_{\XGX}(z)\|_{B(U_{\mcH_u}(\varphi_\G)\mcH_u\otimes U_{\mcH_u}(\varphi_\G)\mcH_u)}\\
         &= \|\mathcal{I}^* \theta_\square^{\mcH_u\boxtimes \mcH_u}(z)\mathcal{I}\|_{B(U_{\mcH_u}(\varphi_\G)\mcH_u\otimes U_{\mcH_u}(\varphi_\G)\mcH_u)}\\
         &\le \|\theta_\square^{\mcH_u\boxtimes \mcH_u}(z)\|_{B(U_{\mcH_u\boxtimes \mcH_u}(\varphi_\G)(\mcH_u\boxtimes \mcH_u))} \le \|z\|_{C^u(\XGX)}.
         \end{align*}
Thus, $\Delta_{\XGX}: \mcO(\XGX)\to \mcO(\XGX)\odot \mcO(\XGX)$ extends uniquely to a contraction
$$\Delta_{\XGX}^u: C^u(\XGX)\to C^u(\XGX)\otimes C^u(\XGX)$$
which satisfies
$(\theta_\square^{\mcH_u}\otimes \theta_{\square}^{\mcH_u})\Delta_{\XGX}^u(z) = \mathcal{I}^*\theta_\square^{\mcH_u\boxtimes \mcH_u}(z)\mathcal{I}$ for all $z\in C^u(\XGX)$. From the latter identity, it follows that $\Delta_{\XGX}^u$ is a ucp map.

(3) Consider the scaling groups of $*$-automorphisms
\begin{align*}
    &\tau_t^\X: \mcO(\X)\to \mcO(\X): X_\pi(\mu, \xi)\mapsto X_\pi(\delta_\X^{it/2} \mu,\delta_\G^{it/2}\xi),\\
    &\tau_t^{\hat{\G}}: \mathscr{U}_\G\to \mathscr{U}_\G: \chi\mapsto \chi\circ \tau_{-t}^\G,\\
    &\tau_t^{\bar{\X}}: \mcO(\bar{\X})\to \mcO(\bar{\X}): Y_\pi(\xi, \mu)\mapsto Y_\pi(\delta_\G^{it/2}\xi, \delta_\X^{it/2}\mu),
\end{align*}
and define the linear map $\tau_t^{\XrX}: \mcO(\X\rtimes \G \ltimes \bar{\X})\to \mcO(\X\rtimes \G \ltimes \bar{\X}): x\chi\bar{y}\mapsto \tau_t^\X(x) \tau^{\hat{\G}}_t(\chi)\tau_t^{\bar{\X}}(\bar{y}).$
Using the identities $(\tau_t^\X\odot \tau_t^\G)\circ \alpha= \alpha\circ \tau_t^\X$ and $(\tau_t^\G \odot \tau_t^{\bar{\X}})\circ  \bar{\alpha}= \bar{\alpha}\circ \tau_t^{\bar{\X}}$, it follows that $\tau_t^{\XrX}$ is a $*$-isomorphism. Hence, it extends to a $*$-isomorphism
$\tau_t^{\XrX,u}: C^u_0(\XrX)\to C^u_0(\XrX).$
Using \eqref{kappamatrix}, we calculate for $\pi\in \Rep(\G)$ and $\mu, \nu\in \mcG_\pi$ that
\begin{align*}
    \tau_t^{\XrX,u} \kappa(Z_\pi(\mu, \nu)) &= \sum_{j=1}^{m_\pi} \tau_t^\X(X_\pi(\mu, e_j^\pi))\tau_t^{\bar{\X}}(Y_\pi(\delta_\G^{-1/2}e_j^\pi, \nu))\tau_t^{\hat{\G}}(\varphi_\G)\\
    &= \sum_{j=1}^{m_\pi} X_\pi(\delta_\X^{it/2}\mu, e_j^\pi)Y_\pi(\delta_\G^{-1/2}e_j^\pi, \delta_\X^{it/2}\nu) \varphi_\G = \kappa(\tau_t^{\XGX}(Z_\pi(\mu, \nu))),
\end{align*}
so $(\kappa^{u})^{-1}\circ \tau_t^{\XrX,u}\circ \kappa^u: C^u(\XGX)\to C^u(\XGX)$ extends $\tau_t^{\XGX}: \mcO(\XGX)\to \mcO(\XGX)$.

(4) Given $z\in \mcO(\XGX)$, we have
\begin{align*}
    \|R_{\XGX}(z)\|_u &= \sup_{\mcH\in \Corr^\G(L^\infty(\X), L^\infty(\X))}\|\theta_\square^\mcH(R_{\XGX}(z))\|=  \sup_{\mcH\in \Corr^\G(L^\infty(\X), L^\infty(\X))}\|\theta_\square^{\overline{\mcH}}(z)\|= \|z\|_u,
\end{align*}
where the second equality follows from \eqref{unitaryantipode} and the last equality follows from the fact that $\overline{\overline{\mcH}}\cong \mcH$ as $\G$-$L^\infty(\X)$-$L^\infty(\X)$-correspondences.

(5) Given $\pi \in \Rep(\G)$, choose an orthonormal basis $\{f_j^\pi\}_{j=1}^{m_\pi}$ for $\mcG_\pi$ for which $\delta_\X$ becomes diagonal, say with (positive) eigenvalues $\delta_\X f_j^\pi = \delta_{\X,j} f_j^\pi$. If $\mu \in \mcG_\pi$, then for all $\omega \in \mathscr{U}_{\XGX}$,
    \begin{align*}
        \sum_{j=1}^{m_\pi} \omega(Z_\pi(\delta_\X^{-1/4}\mu, f_j^\pi)) \overline{\omega(Z_\pi(\delta_\X^{-1/4}\mu, \delta_\X^{1/4}f_j^\pi))} = \sum_{j=1}^{m_\pi} \delta_{\X,j}^{1/4}|\omega(Z_\pi(\delta_\X^{-1/4}\mu, f_j^\pi))|^2\ge 0.
    \end{align*} 
    Thus, the element $Z_\pi(\delta_\X^{-1/4}\mu, \mu)$ is positive definite. By polarization, it follows that $Z_\pi(\mu, \nu)$ is a linear combination of four positive definite elements for all $\pi \in \Rep(\G)$ and all $\mu, \nu \in \mcG_\pi$. Consequently, the linear span of the positive definite elements contains $\mcO(\XGX)$ and is thus norm-dense in $C^u(\XGX)$.

  With $A:= C^u(\XGX)$, one can then take $A_0:= \mcO(\XGX)$ in Definition \ref{complicated}. With $z_0:= i/2$, we have $S_{\XGX}= *\circ \tau_{z_0}^{\XGX}\circ \star$. Let us remark that by uniqueness of the Haar state on an algebraic compact quantum hypergroup, the Haar state $\varphi_{\XGX}^u: C^u(\XGX)\to \C$ (which exists by the result discussed in Remark \ref{hame}) necessarily extends $\varphi_{\XGX}: \mcO(\XGX)\to \C$, so it is faithful on $A_0= \mcO(\XGX)$. Checking the remaining axioms in Definition \ref{complicated} is then straightforward using the results in Section \ref{matrixcoefficients}. 
 \end{proof}

\section{Reduced version of the compact quantum hypergroup}
\label{reducedversion}
Given a compact quantum group $\G$ and an ergodic action $L^\infty(\X)\stackrel{\alpha}\curvearrowleft \G$, the $*$-algebra
$\mcO(\XGX)$ obtains a norm $\|\cdot\|_r$ through the inclusion $\mcO(\XGX)\subseteq L^\infty(\X)\ovot L^\infty(\bar{\X})$, which we will call the \emph{reduced norm}. The norm-closure (resp.\ the $\sigma$-weak closure) of $\mcO(\XGX)$ inside $L^\infty(\X)\ovot L^\infty(\bar{\X})$ will be denoted by $C_\mcO(\XGX)$ (resp.\ $L^\infty_\mcO(\XGX)$). It is then natural to ask if the coassociative map
$\Delta_{\XGX}: \mcO(\XGX)\to \mcO(\XGX)\odot \mcO(\XGX)$ extends to a ucp map
$$\Delta_{\XGX}^r: C_\mcO(\XGX)\to C_\mcO(\XGX)\otimes C_\mcO(\XGX),$$
or even to a normal ucp map
$$\Delta_{\XGX}^r: L^\infty_\mcO(\XGX)\to L^\infty_\mcO(\XGX)\ovot L^\infty_\mcO(\XGX).$$
This turns out to be true. In fact, we will construct an appropriate coassociative normal ucp map in the general framework of locally compact quantum groups (under an additional integrability assumption on the action, which is automatically satisfied in the compact setting).

\subsection{Construction of a coassociative map on the cotensor product}\label{sec3}

Let $\G$ be a locally compact quantum group and let $L^\infty(\X)\stackrel{\alpha}\curvearrowleft \G$ be an ergodic action.

We define the von Neumann algebra
$$L^\infty(\XGX):= L^\infty(\X) \overset{\G}\square L^\infty(\bar{\X})= \{z\in L^\infty(\X)\ovot L^\infty(\bar{\X}): (\alpha\otimes \id)(z)= (\id \otimes \bar{\alpha})(z)\}.$$

\begin{Exa}\label{coidealsex}
    Let $L^\infty(\X)\subseteq L^\infty(\G)$ be a coideal von Neumann algebra, i.e.\ $L^\infty(\X)$ is a von Neumann subalgebra of $L^\infty(\G)$ such that $\Delta_\G(L^\infty(\X))\subseteq L^\infty(\X)\ovot L^\infty(\G)$. We then obtain an induced ergodic action $L^\infty(\X)\stackrel{\Delta_\G}\curvearrowleft\G$. The $*$-isomorphism
    $\phi: R_\G(L^\infty(\X))\to \overline{L^\infty(\X)}: x\mapsto \overline{R_\G(x^*)}$
    is $\G$-equivariant: \begin{equation}\label{sat}
    \overline{\Delta_\G} \phi(x)= (\id \otimes \phi) \Delta_\G(x), \quad x\in R_\G(L^\infty(\X)).
\end{equation}
The equivariance \eqref{sat} allows us to define the isometric $*$-homomorphism
\begin{equation}\label{identification2}
    L^\infty(\X)\cap R_\G(L^\infty(\X))\to L^\infty(\XGX): x \mapsto (\id \otimes \phi)\Delta_\G(x).
\end{equation}
We now argue that it is surjective. To do this, fix $z\in L^\infty(\XGX)\subseteq L^\infty(\X)\ovot L^\infty(\bar{\X})$. We then compute, making use of \eqref{sat} again, that
\begin{align*}
    (\Delta_\G \otimes \id)(\id \otimes \phi^{-1})(z)&= (\id \otimes \id \otimes \phi^{-1})(\Delta_\G \otimes \id)(z) = (\id \otimes \id \otimes \phi^{-1})(\id \otimes \overline{\Delta_\G})(z) = (\id \otimes \Delta_\G)(\id \otimes \phi^{-1})(z),
\end{align*}
so it follows from \cite{Vae01}*{Theorem 2.7} (applied to both the right action $L^\infty(\X)\curvearrowleft\G$ and the left action $\G\curvearrowright R_\G(L^\infty(\X))$) that there exists $x \in L^\infty(\X)\cap R_\G(L^\infty(\X))$ such that $z= (\id \otimes \phi)\Delta_\G(x)$. Thus, \eqref{identification2} is a $*$-isomorphism.
\end{Exa}

Let us now return to the general theory, for which we impose one further assumption: we assume that
the (ergodic) action $\alpha$ is \emph{integrable}, meaning that the normal and faithful weight $\varphi_\X:= (\id \otimes \varphi_\G)\circ \alpha: L^\infty(\X)_+\to [0, \infty]$ determined by
    $$\varphi_\G((\omega \otimes \id)\alpha(x)) = \omega(1)  \varphi_\X(x), \quad x\in L^\infty(\X)_+, \quad \omega \in L^1(\X)_+,$$
    is semi-finite. This allows us to identify $L^2(\X)=L^2(L^\infty(\X), \varphi_\X)$. In this generality, we show that there is a natural normal ucp map
$\Delta_{\XGX}^r: L^\infty(\XGX)\to L^\infty(\XGX)\ovot L^\infty(\XGX)$
which is coassociative (but in general not multiplicative). 

 To construct this map, we make use of the Galois map associated to the action $L^\infty(\X)\stackrel{\alpha}\curvearrowleft \G$ \cites{DC09a, DC11}. More concretely, consider the isometry
$$G: L^2(\X)\otimes L^2(\X)\to L^2(\X)\otimes L^2(\G), \quad G(\Lambda_{\varphi_\X}(x)\otimes \Lambda_{\varphi_\X}(y))= (\Lambda_{\varphi_\X}\otimes \Lambda_{\varphi_\G})(\alpha(x)(y\otimes 1)), \quad x,y \in \mathscr{N}_{\varphi_\X}.$$
Define the \emph{Galois isometry} $\tilde{G}:= \Sigma \circ G: L^2(\X)\otimes L^2(\X)\to L^2(\G)\otimes L^2(\X)$. 
We summarize some relevant properties of these maps in the following result. See \cite{DC09a}*{Sections 6.4 \& 7.2} and \cite{DC11}*{Section 2} for much more information and complete proofs.

\begin{Prop}\label{Galois} The following properties hold:
\begin{enumerate}[label=(\Alph*)]
    \item  $G(x\otimes 1)= \alpha(x) G$ for $x\in L^\infty(\X)$, \label{11}
    \item \label{22} \vspace{-1.5mm}$G(1\otimes x')= (x'\otimes 1)G$ for $x'\in L^\infty(\X)'$,
    \item \label{33} \vspace{-1.5mm}$G^*(1\otimes g')G \in L^\infty(\X)'\ovot L^\infty(\X)$ for $g'\in L^\infty(\G)'$,
\item \label{44} \vspace{-1.5mm}$\tilde{G}_{12}U_{\X,13}= V_{\G, 13}\tilde{G}_{12},$
    \item \label{55} \vspace{-1.5mm}
$(\id \otimes \alpha^{\op})(\tilde{G})= W_{\hat{\G},12}\tilde{G}_{13}$.\footnote{Note that \ref{22} ensures that $\tilde{G}\in B(L^2(\X), L^2(\G))\ovot L^ \infty(\X)$.}
\end{enumerate}
\end{Prop}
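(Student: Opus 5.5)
The plan is to verify all five identities on the total set of vectors $\Lambda_{\varphi_\X}(x)\otimes \Lambda_{\varphi_\X}(y)$ with $x,y\in \mathscr{N}_{\varphi_\X}$ (or on a Tomita-algebra core where needed). We then extend by boundedness, since $G$ is an isometry. The first step is to recall why $G$ is a well-defined isometry. By left invariance of $\varphi_\G$ and the definition $\varphi_\X=(\id\otimes\varphi_\G)\alpha$, we have
$$\|(\Lambda_{\varphi_\X}\otimes\Lambda_{\varphi_\G})(\alpha(x)(y\otimes 1))\|^2=\varphi_\X(y^*(\id\otimes\varphi_\G)\alpha(x^*x)\,y)=\varphi_\X(x^*x)\varphi_\X(y^*y),$$
and similarly for inner products. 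Here integrability ensures the relevant vectors are defined. I take this as the standing input, following \cite{DC09a}*{Section 6.4}.

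\ref{11} is immediate: $\alpha(x)\alpha(a)(b\otimes 1)=\alpha(xa)(b\otimes1)$, and $\Lambda_{\varphi_\X}\otimes\Lambda_{\varphi_\G}$ is a left module map. For \ref{22}, I write $x'=J_\X w^*J_\X$ with $w$ in the Tomita algebra, so that $x'\Lambda_{\varphi_\X}(b)=\Lambda_{\varphi_\X}(b\,\sigma^{\X}_{-i/2}(w))$. On the other side, for the tensor weight $\varphi_\X\otimes\varphi_\G$ the element $w\otimes 1$ has modular group $\sigma^\X\otimes\sigma^\G$ acting as $\sigma^\X$ on the first leg. Hence right multiplication of $\alpha(a)(b\otimes 1)$ by $\sigma^\X_{-i/2}(w)\otimes1$ is implemented by $x'\otimes 1$ on $L^2(\X)\otimes L^2(\G)$. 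A density argument in $w$ then gives the identity for all $x'\in L^\infty(\X)'$.

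\ref{33} then follows formally from \ref{11} and \ref{22}; no further computation is needed. Taking adjoints in \ref{11} (for $x^*$) gives $G^*\alpha(x)=(x\otimes1)G^*$, and taking adjoints in \ref{22} gives $G^*(x'\otimes 1)=(1\otimes x')G^*$. Since $1\otimes g'$ commutes with $\alpha(x)\in L^\infty(\X)\ovot L^\infty(\G)$ and with $x'\otimes 1$, we get
$$G^*(1\otimes g')G(x\otimes 1)=G^*\alpha(x)(1\otimes g')G=(x\otimes1)G^*(1\otimes g')G,$$
and likewise $G^*(1\otimes g')G$ commutes with $1\otimes x'$. Hence it lies in $(L^\infty(\X)\otimes 1\cup 1\otimes L^\infty(\X)')'=L^\infty(\X)'\ovot L^\infty(\X)$.

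For \ref{44} and \ref{55}, I evaluate both sides on $\Lambda_{\varphi_\X}(x)\otimes\Lambda_{\varphi_\X}(y)\otimes\Lambda(g)$ (with $\Lambda(g)$ in $L^2(\G)$, resp.\ $L^2(\X)$ in the third leg). I use three ingredients. First, the formula $U_\X(\Lambda_{\varphi_\X}(y)\otimes\Lambda_{\varphi_\G}(g))=(\Lambda_{\varphi_\X}\otimes\Lambda_{\varphi_\G})(\alpha(y)(1\otimes g))$, i.e.\ the analogue of \eqref{formulaimplementation} in the integrable setting (from \cite{Vae01}). Second, the defining property of $V_\G$ (after translating between $\Lambda_{\psi_\G}$ and $\Lambda_{\varphi_\G}$), or equivalently of $W_{\G}$, on $\Lambda_{\varphi_\G}$. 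Third, coassociativity $(\alpha\otimes\id)\alpha=(\id\otimes\Delta_\G)\alpha$. For \ref{44}, both sides should reduce to the vector $(\Lambda_{\varphi_\G}\otimes\Lambda_{\varphi_\X}\otimes \Lambda_{\varphi_\G})$ applied to the appropriately flipped $(\alpha\otimes\id)(\alpha(x)(y\otimes 1))(1\otimes1\otimes g)$-type element. For \ref{55}, I would slice the first leg against $\omega\in B(L^2(\X),L^2(\G))_*$, using the footnote's observation that $\tilde G\in B(L^2(\X),L^2(\G))\ovot L^\infty(\X)$ (which follows from \ref{22}). Then $\alpha^{\op}$ is applied to the resulting elements of $L^\infty(\X)$. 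I would then recognise $W_{\hat\G}$ via its relation to $V_\G$, namely $W_{\hat\G}=\Sigma V_\G^*\Sigma$ up to the conventions recorded earlier (e.g.\ $(u_\G\otimes1)V_\G(u_\G\otimes1)=W_{\G,21}$).

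I expect \ref{55} to be the main obstacle. It mixes the opposite action, the dual regular representation, and the switch in $\tilde G$, so the bookkeeping of which leg carries $\alpha$, $\Delta_\G$ or $R_\G$ is delicate. It also requires care with domains, since $\varphi_\X$ is only a weight and one must work with $\mathscr{N}_{\varphi_\X}$ together with left invariance in the strong form $(\omega\otimes\id)$. If the direct computation becomes unwieldy, I would instead show that both sides of \ref{55} are isometries with the same range and that they intertwine the same operators, reducing to \ref{44} and \ref{11}. Failing that, I would simply import the statement from \cite{DC09a}*{Section 7.2} and \cite{DC11}*{Section 2}, as the paper indicates.
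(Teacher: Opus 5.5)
Your arguments for (A) and (C) are the paper's: the paper also derives (C) from (A) and (B) by the same commutation computation. Your isometry check for $G$ is also fine.

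The real difference is (B). The paper does not prove it but imports it from [DC09a, Lemma 6.4.10(3)]. You give a self-contained proof, and it is correct:
\begin{itemize}
\item $\alpha(a)(b\otimes 1)\in\mathscr{N}_{\varphi_\X\otimes\varphi_\G}$ by your norm computation;
\item the tensor weight has modular conjugation $J_\X\otimes J_\G$ and modular group $\sigma^\X_t\otimes\sigma^\G_t$, so right multiplication by $\sigma^\X_{-i/2}(w)\otimes 1$ is implemented by $J_\X w^*J_\X\otimes 1$;
\item $\sigma$-weak density of these operators in $L^\infty(\X)'$ finishes the argument.
\end{itemize}
This buys self-containedness at the cost of a short modular-theory computation.

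For (D) and (E), the paper likewise only cites [DC09a, Lemma 7.2.3 and Proposition 7.2.5], so your final fallback is the paper's route. You should add the caveat the paper states explicitly. In [DC09a] these statements are proved for Galois objects, where $\tilde G$ is unitary, so one must check that the proofs only use that $G$ is an isometry.

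If you want the direct route for (D) and (E) instead, it works, but not with the ingredients exactly as you recalled them.

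\emph{The formula for $U_\X$ needs a twist.} For non-unimodular $\G$, the formula $U_\X(\Lambda_{\varphi_\X}(y)\otimes\Lambda_{\varphi_\G}(g))=(\Lambda_{\varphi_\X}\otimes\Lambda_{\varphi_\G})(\alpha(y)(1\otimes g))$ does not even define an isometry. With $\delta$ the modular element of $\G$, one has $(\varphi_\G\otimes\id)\Delta_\G(x)=\varphi_\G(x)\delta$, hence $(\varphi_\X\otimes\id)\alpha(x)=\varphi_\X(x)\delta$. The correct formula has, formally, $1\otimes\delta^{-1/2}g$ in place of $1\otimes g$. The same twist appears in $V_\G$ once it is written with $\Lambda_{\varphi_\G}$ instead of $\Lambda_{\psi_\G}$: namely $V_\G(\Lambda_{\varphi_\G}(a)\otimes\Lambda_{\varphi_\G}(b))=(\Lambda_{\varphi_\G}\otimes\Lambda_{\varphi_\G})(\Delta_\G(a)(1\otimes\delta^{-1/2}b))$. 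With this, both sides of (D) send $\Lambda(x)\otimes\Lambda(y)\otimes\Lambda(g)$ to $\Lambda(x_{(1)}\otimes x_{(0)}y\otimes x_{(2)}\delta^{-1/2}g)$, in formal Sweedler notation.

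\emph{The dual regular representation.} $W_{\hat\G}=\Sigma W_\G^*\Sigma$, as used in the proof of Proposition~\ref{identities}, not $\Sigma V_\G^*\Sigma$. It is related to $V_\G$ only through conjugation by $u_\G$.

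\emph{No slicing is needed for (E).} Since $(\id\otimes\alpha)(T)=U_{\X,23}T_{12}U_{\X,23}^*$ and $\alpha^{\op}$ is $\alpha$ followed by the flip, (E) is equivalent to $U_{\X,23}\tilde G_{12}=W_{\hat\G,13}\tilde G_{12}U_{\X,23}$. Using $W_\G^*(\Lambda_{\varphi_\G}(a)\otimes\Lambda_{\varphi_\G}(b))=(\Lambda_{\varphi_\G}\otimes\Lambda_{\varphi_\G})(\Delta_\G(b)(a\otimes 1))$, both sides give $\Lambda(x_{(2)}\otimes x_{(0)}y_{(0)}\otimes x_{(1)}y_{(1)}\delta^{-1/2}g)$.

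\emph{Your other fallback would not suffice.} Comparing ranges and intertwining properties does not give equality. Two isometries with the same range differ by a unitary of the domain, and commuting with a given family of operators does not force that unitary to be $1$.
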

\begin{proof} Statement \ref{11} is obvious. The statement \ref{22} is proven in \cite{DC09a}*{Lemma 6.4.10 (3)}. 
If $g'\in L^\infty(\G)'$, $x\in L^\infty(\X)$ and $x'\in L^\infty(\X)'$, then 
\begin{align*}
    G^*(1\otimes g')G(x\otimes 1) \stackrel{\ref{11}}=& G^*\alpha(x) (1\otimes g') G \stackrel{\ref{11}}= (x\otimes 1)G^*(1\otimes g')G,\\
    G^*(1\otimes  g')G(1\otimes x') \stackrel{\ref{22}}=& G^*(x'\otimes g')G \stackrel{\ref{22}}= (1\otimes x')G^*(1\otimes  g')G,
\end{align*}
which shows that \ref{33}  holds. The statement \ref{44} is proven in \cite{DC09a}*{Lemma 7.2.3} and the statement \ref{55} is proven in \cite{DC09a}*{Proposition 7.2.5} (an inspection of the proofs of the latter two statements shows that unitarity of $\tilde{G}$ is not required).
\end{proof}

Since $u_\G L^\infty(\G) u_\G = L^\infty(\G)'$, \ref{33} allows us to define the normal ucp map
\begin{equation}\label{theta}
    \theta: L^\infty(\G)\to L^\infty(\bar{\X})\ovot L^\infty(\X): x \mapsto (I \otimes \id)(\tilde{G}^*(u_\G x u_\G\otimes 1)\tilde{G}),
\end{equation}
where we recall the $\G$-equivariant $*$-isomorphism $I: L^\infty(\X)'\cong L^\infty(\bar{\X}): \rho_\X(x^*)\mapsto \overline{x}$.

\begin{Prop}\label{identities} We have
$(\theta \otimes \id)\circ \Delta_\G= (\id \otimes \alpha)\circ \theta$ and $(\bar{\alpha}\otimes \id)\circ \theta = (\id \otimes \theta)\circ \Delta_\G.$
\end{Prop}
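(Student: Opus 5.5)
Both identities can be checked directly at the level of the defining formula \eqref{theta}, by turning the coproduct (resp.\ the actions) into conjugation by the right unitaries and moving those unitaries through $\tilde{G}$ using Proposition~\ref{Galois}.

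\textbf{First identity.} Write $\Delta_\G(x)=V_\G(x\otimes 1)V_\G^*$. Conjugating by $u_\G$ and using $(u_\G\otimes 1)V_\G(u_\G\otimes 1)=W_{\G,21}$ gives
\[(u_\G\otimes u_\G)\Delta_\G(x)(u_\G\otimes u_\G)=(1\otimes u_\G)W_{\G,21}(u_\G x u_\G\otimes 1)W_{\G,21}^*(1\otimes u_\G).\]
Placed in legs $1,3$ against $\tilde G_{12}$, we need
\[\tilde G_{12}^*\bigl((u_\G\otimes 1\otimes u_\G)\Delta_\G(x)_{13}(u_\G\otimes 1\otimes u_\G)\bigr)\tilde G_{12}.\]
On the other side, $(\id\otimes\alpha)\theta(x)$ is obtained by applying $\alpha$ to the second leg. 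Since $\tilde G\in B(L^2(\X),L^2(\G))\ovot L^\infty(\X)$, this is $U_{\X,23}(\cdot)_{12}U_{\X,23}^*$. I would instead use Proposition~\ref{Galois}\ref{44} in the form $\tilde G_{12}U_{\X,13}=V_{\G,13}\tilde G_{12}$, after swapping legs to match. The conceptual route is:
\begin{itemize}
\item The first leg of $\theta$ lives in $L^\infty(\bar\X)\cong L^\infty(\X)'$, while the coproduct acts on the $L^\infty(\G)$ slot.
\item Leg $2$ of $\tilde G$ (the $L^\infty(\X)$ leg) carries $U_\X$ on $L^2(\X)$.
\item Relation \ref{44}, conjugated by $u_\G$ in the $\G$-leg, converts $V_\G$ acting on $(L^2(\G),L^2(\G))$ into $U_\X$ acting on the second $L^\infty(\X)$ factor.
\end{itemize}
Concretely, I expect
\[\tilde G_{12}^*(\ldots)\tilde G_{12}=(1\otimes U_\X)\,\bigl(\tilde G^*(u_\G x u_\G\otimes 1)\tilde G\otimes 1\bigr)(1\otimes U_\X)^*,\]
with the appropriate leg placement. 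The right-hand side is exactly $(\id\otimes\alpha)$ applied to $\tilde G^*(u_\G x u_\G\otimes 1)\tilde G$, and $I\otimes\id\otimes\id$ commutes with this operation.

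\textbf{Second identity.} Here I would use Proposition~\ref{Galois}\ref{55}: $(\id\otimes\alpha^{\op})(\tilde G)=W_{\hat\G,12}\tilde G_{13}$. The action $\bar\alpha$ on the first leg corresponds, via the $\G$-equivariant isomorphism $I$, to $\alpha'(a')=U^*_{\X,21}(1\otimes a')U_{\X,21}$ on $L^\infty(\X)'$. So I would compute $\alpha'$ on the first leg of $\tilde G^*(u_\G xu_\G\otimes 1)\tilde G$. Using \eqref{unitaryflip}, \ref{44} and \ref{55}, this should become an expression in which $u_\G x u_\G$ is hit by $W_{\hat\G}$, i.e.\ conjugated by $\hat{W}$-type unitaries. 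Since $u_\G$ intertwines $\Delta_\G$ with the commutant coproduct $\Delta_\G'$, this should be $(u_\G\otimes u_\G)\Delta_\G(x)(u_\G\otimes u_\G)$ spread over the correct legs, yielding $(\id\otimes\theta)\Delta_\G(x)$. A cleaner alternative for the second identity is to apply $\alpha^{\op}$ to the $L^\infty(\X)$-leg of $\tilde G$ and use \ref{55} to rewrite $(\id\otimes\id\otimes\ldots)$ expressions. However, leg $2$ of $\theta$ is the $L^\infty(\X)$ leg already handled by the first identity, so \ref{44} with the flip \eqref{unitaryflip} is likely the right tool.

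\textbf{Main obstacle.} I expect the main difficulty to be bookkeeping rather than any conceptual step. This includes:
\begin{itemize}
\item correct leg numbering;
\item the conjugations by $u_\G$, $J_\X$ and $J_{\hat\G}$ relating $V_\G$, $W_\G$ and $W_{\hat\G}$ (using the identities $(u_\G\otimes 1)V_\G(u_\G\otimes1)=W_{\G,21}$ and $(J\otimes J)V(J\otimes J)=V^*$);
\item confirming that $I$ carries $\alpha'$ to $\bar\alpha$, including the $R_\G$ twist.
\end{itemize}
The second identity, involving the commutant action and $W_{\hat\G}$, is where I would proceed most carefully. If needed, I would verify it on vectors $\Lambda_{\varphi_\X}(a)\otimes\Lambda_{\varphi_\X}(b)$ using the defining formula for $G$ and invariance of $\varphi_\G$.
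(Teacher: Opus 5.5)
There is a genuine gap in your treatment of the first identity: you have misread which leg relation \ref{44} controls. By \ref{33}, in $\tilde{G}^*(u_\G x u_\G\otimes 1)\tilde{G}=G^*(1\otimes u_\G xu_\G)G\in L^\infty(\X)'\ovot L^\infty(\X)$, the \emph{first} domain leg of $\tilde{G}$ is the commutant leg (the one $I$ turns into $L^\infty(\bar{\X})$). The \emph{second} domain leg is the $L^\infty(\X)$-leg. In $\tilde{G}_{12}U_{\X,13}=V_{\G,13}\tilde{G}_{12}$ the unitary $U_\X$ sits on leg $1$, i.e.\ on the commutant leg, not on ``the second $L^\infty(\X)$ factor''. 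So \ref{44} lets you push $U_{\X,21}$ (which implements $\alpha'$) through $\tilde{G}_{23}$. It gives no way to push $U_{\X,23}$ (which implements $\id\otimes\alpha$) through $\tilde{G}_{12}$. No relabelling of legs repairs this, since the two domain legs of $\tilde{G}$ play different roles. Your displayed ``expected'' formula is therefore just a restatement of the first identity with nothing behind it. It is also mis-stated: in $(\theta\otimes\id)\Delta_\G(x)$ only the leg fed into $\theta$ gets conjugated by $u_\G$, and conjugating the third leg as well would move it into $L^\infty(\G)'$.

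The missing ingredient is \ref{55}, which is precisely a statement about $\alpha$ on the $L^\infty(\X)$-leg of $\tilde{G}\in B(L^2(\X),L^2(\G))\ovot L^\infty(\X)$ --- a membership you noted but did not exploit. Since $\id\otimes\alpha^{\op}$ is a $*$-homomorphism, $(\id\otimes\alpha^{\op})(\tilde{G}^*(u_\G xu_\G\otimes 1)\tilde{G})=\tilde{G}_{13}^*W_{\hat{\G},12}^*(u_\G xu_\G\otimes 1\otimes 1)W_{\hat{\G},12}\tilde{G}_{13}$. Using $W_{\hat{\G},12}=W_{\G,21}^*$ and $(u_\G\otimes 1)V_\G(u_\G\otimes 1)=W_{\G,21}$, the middle factor becomes $((u_\G\otimes 1)\Delta_\G(x)(u_\G\otimes 1))\otimes 1$. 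Flipping legs $2,3$ and applying $I\otimes\id\otimes\id$ then gives $(\id\otimes\alpha)\theta(x)=(\theta\otimes\id)\Delta_\G(x)$; this is how the paper argues. Dually, the second identity needs only \ref{44}, not \ref{55} and not \eqref{unitaryflip}. Indeed $U_{\X,21}^*\tilde{G}_{23}^*(1\otimes u_\G xu_\G\otimes 1)\tilde{G}_{23}U_{\X,21}=\tilde{G}_{23}^*V_{\G,21}^*(1\otimes u_\G xu_\G\otimes 1)V_{\G,21}\tilde{G}_{23}$. Then $V_{\G,21}=(1\otimes u_\G\otimes 1)W_{\G,12}(1\otimes u_\G\otimes 1)$ and $W_\G^*(1\otimes x)W_\G=\Delta_\G(x)$ turn the middle into $(1\otimes u_\G\otimes 1)(\Delta_\G(x)\otimes 1)(1\otimes u_\G\otimes 1)$. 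Applying $\id\otimes I\otimes\id$ and the equivariance of $I$ finishes. So each identity uses exactly one of \ref{44}, \ref{55}, and your sketch attaches \ref{44} to the identity where it cannot work.
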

\begin{proof} Fix $x\in L^\infty(\G)$. We then calculate
    \begin{align*}
        (\alpha'\otimes \id)(\tilde{G}^*(u_\G xu_\G \otimes 1)\tilde{G})&= U_{\X,21}^* \tilde{G}_{23}^* (1\otimes u_\G x u_\G\otimes 1) \tilde{G}_{23}U_{\X,21}\\
        &\stackrel{\ref{44}}= \tilde{G}_{23}^* V_{\G,21}^* (1\otimes u_\G x u_\G \otimes 1)V_{\G,21}\tilde{G}_{23}\\
        &= \tilde{G}_{23}^* (1\otimes u_\G \otimes 1)W_{\G,12}^*(1\otimes x\otimes 1)W_{\G,12} (1\otimes u_\G \otimes 1)\tilde{G}_{23}\\
        &= \tilde{G}_{23}^*(1\otimes u_\G \otimes 1)(\Delta_\G(x)\otimes 1)(1\otimes u_\G \otimes 1)\tilde{G}_{23},
    \end{align*}
    so that 
   $(\bar{\alpha}\otimes \id)\theta(x)= (\id \otimes I \otimes \id)(\alpha'\otimes \id)(\tilde{G}^*(u_\G x u_\G \otimes 1)\tilde{G})= (\id \otimes \theta)\Delta_\G(x)$. On the other hand,
   \begin{align*}
       (\id \otimes \alpha^{\op})(\tilde{G}^*(u_\G x u_\G \otimes 1)\tilde{G}) &\stackrel{\ref{55}}= \tilde{G}_{13}^*W_{\hat{\G},12}^* (u_\G x u_\G \otimes 1 \otimes 1) W_{\hat{\G},12}\tilde{G}_{13}\\
       &= \tilde{G}_{13}^* W_{\G,21}(u_\G x u_\G \otimes 1\otimes 1)W_{\G,21}^* \tilde{G}_{13}\\
       &= \tilde{G}_{13}^* (u_\G \otimes 1 \otimes 1)V_{\G,12}(x\otimes 1 \otimes 1)V_{\G,12}^* (u_\G \otimes 1 \otimes 1)\tilde{G}_{13}\\
       &= \tilde{G}_{13}^*(u_\G \otimes 1 \otimes 1)(\Delta_\G(x)\otimes 1)(u_\G \otimes 1 \otimes 1)\tilde{G}_{13},
   \end{align*}
   so that
   $(\id \otimes \alpha)(\tilde{G}^*(u_\G xu_\G \otimes 1)\tilde{G}) = \tilde{G}_{12}^* (u_\G \otimes 1 \otimes 1)\Delta_\G(x)_{13}(u_\G \otimes 1 \otimes 1)\tilde{G}_{12}.$
   Applying $I\otimes \id \otimes \id$ to this expression leads to $(\id \otimes \alpha)\theta(x)=(\theta \otimes \id)\Delta_\G(x)$.
\end{proof}

\begin{Theorem}\label{mainsec3} Given $z\in L^\infty(\XGX)$, we have that
$$\Delta_{\XGX}^r(z):= (\id \otimes \theta \otimes \id)(\alpha\otimes \id)(z) = (\id \otimes \theta \otimes \id)(\id \otimes \bar{\alpha})(z)\in L^\infty(\XGX)\ovot L^\infty(\XGX).$$
Moreover, $\Delta_{\XGX}^r: L^\infty(\XGX)\to L^\infty(\XGX)\ovot L^\infty(\XGX)$ is coassociative.
\end{Theorem}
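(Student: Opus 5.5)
The whole argument rests on the two intertwining identities of Proposition \ref{identities}, applied leg by leg.

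\emph{Well-definedness.} The two expressions agree simply because $z\in L^\infty(\XGX)$ means $(\alpha\otimes\id)(z)=(\id\otimes\bar\alpha)(z)$ in $L^\infty(\X)\ovot L^\infty(\G)\ovot L^\infty(\bar\X)$. Applying the normal ucp map $\id\otimes\theta\otimes\id$ gives an element $w:=\Delta^r_{\XGX}(z)\in L^\infty(\X)\ovot L^\infty(\bar\X)\ovot L^\infty(\X)\ovot L^\infty(\bar\X)$.

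\emph{Landing in $L^\infty(\XGX)\ovot L^\infty(\XGX)$.} We must check that $w$ lies in the cotensor product in the first pair of legs $(1,2)$ and in the second pair $(3,4)$. The fixed-point description works here: since the constraints act on disjoint legs, it suffices to show
\[
(\alpha\otimes\id\otimes\id\otimes\id)(w)=(\id\otimes\bar\alpha\otimes\id\otimes\id)(w)
\]
and the analogous identity in legs $3,4$. By a slice-map or Fubini-type argument, $L^\infty(\XGX)\ovot L^\infty(\XGX)$ is exactly the set of elements satisfying both equalizer conditions. For legs $1,2$, I will use the first expression $w=(\id\otimes\theta\otimes\id)(\alpha\otimes\id)(z)$ as follows:
\begin{itemize}
\item By the second identity of Proposition \ref{identities}, $(\bar\alpha\otimes\id)\theta=(\id\otimes\theta)\Delta_\G$.
\item Hence $(\id\otimes\bar\alpha\otimes\id\otimes\id)(w)=(\id\otimes\id\otimes\theta\otimes\id)(\id\otimes\Delta_\G\otimes\id)(\alpha\otimes\id)(z)$.
\item The action property $(\id\otimes\Delta_\G)\alpha=(\alpha\otimes\id)\alpha$ turns this into $(\alpha\otimes\id\otimes\id\otimes\id)(w)$.
\end{itemize}
For legs $3,4$, I will use the second expression $w=(\id\otimes\theta\otimes\id)(\id\otimes\bar\alpha)(z)$. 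The first identity $(\id\otimes\alpha)\theta=(\theta\otimes\id)\Delta_\G$, combined with the coaction property of the left action $\bar\alpha$, namely $(\Delta_\G\otimes\id)\bar\alpha=(\id\otimes\bar\alpha)\bar\alpha$, gives $(\id\otimes\id\otimes\alpha\otimes\id)(w)=(\id\otimes\id\otimes\id\otimes\bar\alpha)(w)$.

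\emph{Coassociativity.} I compute both sides on $z$, again choosing the convenient expression at each step.
\begin{itemize}
\item For $(\Delta^r\otimes\id)\Delta^r(z)$: write the outer $\Delta^r$ on legs $1,2$ via $\alpha$ on leg 1. This gives $(\id\otimes\theta\otimes\id\otimes\id\otimes\id)(\alpha\otimes\id\otimes\id\otimes\id)(\id\otimes\theta\otimes\id)(\alpha\otimes\id)(z)$. Since the two maps act on different legs they commute, and then $(\alpha\otimes\id)\alpha=(\id\otimes\Delta_\G)\alpha$ yields
\[
(\id\otimes\theta\otimes\theta\otimes\id)(\id\otimes\Delta_\G\otimes\id)(\alpha\otimes\id)(z).
\]
\item For $(\id\otimes\Delta^r)\Delta^r(z)$: use $\alpha$ on leg 3 in the outer application. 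The identity $(\id\otimes\alpha)\theta=(\theta\otimes\id)\Delta_\G$ converts $(\id\otimes\id\otimes\alpha\otimes\id)(\id\otimes\theta\otimes\id)$ into the same expression $(\id\otimes\theta\otimes\theta\otimes\id)(\id\otimes\Delta_\G\otimes\id)(\alpha\otimes\id)(z)$.
\end{itemize}
Here I use the formula defining $\Delta^r$, which makes sense on all of $L^\infty(\X)\ovot L^\infty(\bar\X)$ in the relevant legs, and not only on $L^\infty(\XGX)$.

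The main obstacle is the bookkeeping in the second step: matching legs correctly, and checking that the orientation conventions for the left action $\bar\alpha$ in Proposition \ref{identities} are the right ones. The identification of the tensor product of fixed-point algebras with the joint fixed-point algebra also needs a short but genuine argument. I would prove it by applying slice maps $\omega\otimes\omega'$ on the other pair of legs, reducing to the single-pair statement.
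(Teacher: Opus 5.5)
Your proof is correct. The range statement is handled exactly as in the paper. Its two inclusions $(\id\otimes\theta)\alpha(L^\infty(\X))\subseteq L^\infty(\XGX)\ovot L^\infty(\X)$ and $(\theta\otimes\id)\bar{\alpha}(L^\infty(\bar{\X}))\subseteq L^\infty(\bar{\X})\ovot L^\infty(\XGX)$ are your two leg computations. Its ``consequently'' silently uses $(A_1\ovot B_2)\cap(B_1\ovot A_2)=A_1\ovot A_2$, which is the slice-map (Fubini product) fact you flag. Your leg orientations for $\bar{\alpha}$ and $\theta$ are the right ones.

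The genuine difference is in coassociativity. The paper does not use Proposition~\ref{identities} at that point. It starts from the $\alpha$-form of both applications and uses $(\alpha\otimes\id)\alpha=(\id\otimes\Delta_\G)\alpha$. It then uses the cotensor condition to replace $(\alpha\otimes\id)(z)$ by $(\id\otimes\bar{\alpha})(z)$, and then $(\Delta_\G\otimes\id)\bar{\alpha}=(\id\otimes\bar{\alpha})\bar{\alpha}$. Finally it reads off the $\bar{\alpha}$-form of $(\id\otimes\Delta^r_{\XGX})\Delta^r_{\XGX}(z)$. So once the range is known, coassociativity needs only the two coaction properties and the equalizer condition, and no property of $\theta$ at all.

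You instead stay with the $\alpha$-form throughout and commute the outer $\alpha$ past $\theta$ via $(\id\otimes\alpha)\theta=(\theta\otimes\id)\Delta_\G$. This proves a bit more: $\Phi:=(\id\otimes\theta)\alpha$ satisfies $(\Phi\otimes\id\otimes\id)\Phi=(\id\otimes\id\otimes\Phi)\Phi$. Hence the $\alpha$-form formula is already coassociative on all of $L^\infty(\X)\ovot L^\infty(\bar{\X})$, and the cotensor condition enters only through well-definedness and the range statement. The paper's version instead makes the left--right symmetry of the construction visible, and shows that coassociativity is automatic once the range is right.
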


\begin{proof} By Proposition \ref{identities}, we see that
    $$(\id \otimes \theta)\alpha(L^\infty(\X))\subseteq  L^\infty(\XGX)\ovot L^\infty(\X), \quad (\theta \otimes \id)\bar{\alpha}(L^\infty(\bar{\X}))\subseteq L^\infty(\bar{\X})\ovot L^\infty(\XGX).$$
    Consequently
    $\Delta_{\XGX}^r(L^\infty(\XGX))\subseteq L^\infty(\XGX)\ovot L^\infty(\XGX).$
    
    Next, we view $L^\infty(\XGX)$ as having two tensor legs. Then for $z\in L^\infty(\XGX)$, we find
    \begin{align*}
        (\Delta_{\XGX}^r\otimes \id_{L^\infty(\XGX)})\Delta_{\XGX}^r(z)&= (\id\otimes \theta \otimes \id \otimes \id \otimes \id)(\alpha \otimes \id \otimes \id \otimes \id)(\id \otimes \theta \otimes \id)(\alpha\otimes \id)(z)\\
        &= (\id \otimes \theta \otimes \theta \otimes \id)(\alpha \otimes \id\otimes \id)(\alpha\otimes \id)(z)\\
        &= (\id \otimes \theta \otimes \theta \otimes \id)(\id \otimes \Delta_\G\otimes \id)(\alpha\otimes \id)(z)\\
        &=  (\id \otimes \theta \otimes \theta \otimes \id)(\id \otimes \Delta_\G \otimes \id)(\id \otimes \bar{\alpha})(z)\\
        &= (\id \otimes \theta \otimes \theta \otimes \id)(\id \otimes \id \otimes \bar{\alpha})(\id \otimes \bar{\alpha})(z)\\
         &= (\id \otimes \id \otimes \id \otimes \theta \otimes \id)(\id \otimes \id \otimes \id \otimes \bar{\alpha})(\id \otimes \theta \otimes \id)(\id \otimes \bar{\alpha})(z)\\
        &=   (\id_{L^\infty(\XGX)}\otimes \Delta_{\XGX}^r)\Delta_{\XGX}^r(z),
    \end{align*}
    and the coassociativity is proven.
\end{proof}

\subsection{Reduced compact quantum hypergroup structure}

We now restrict again to the case where $\G$ is a compact quantum group and an ergodic action $L^\infty(\X)\stackrel{\alpha}\curvearrowleft\G$ is given.

Note that we have the inclusions
\begin{equation}\label{inclusions}
    C_\mcO(\XGX)\subseteq C(\XGX), \quad L^\infty_\mcO(\XGX)\subseteq  L^\infty(\XGX),
\end{equation}
where we define $C(\XGX):= \{z\in C(\X)\otimes C(\bar{\X}): (\alpha\otimes \id)(z)= (\id \otimes \bar{\alpha})(z)\}$. It is not clear if the inclusions \eqref{inclusions} can be strict (see Remark \ref{completionss} for a brief discussion).

The next result shows that the normal ucp map
$\Delta_{\X\times_\G \bar{\X}}^r: L^\infty(\XGX)\to L^\infty(\XGX)\ovot L^\infty(\XGX)$
from Theorem \ref{mainsec3} extends $\Delta_{\XGX}: \mcO(\XGX)\to \mcO(\XGX)\odot \mcO(\XGX)$.
\begin{Prop} Given $\pi\in \Rep(\G)$ and $\xi, \eta \in \mcH_\pi$, we have
    $\theta(U_\pi(\xi, \eta))= \sum_{j=1}^{m_\pi} Y_\pi(\xi, f_j^\pi)\otimes X_\pi(f_j^\pi, \eta)$. Consequently, 
    \begin{equation}\label{actionmatrixcoefficients}
        \Delta_{\XGX}^r(Z_\pi(\mu, \nu))= \sum_{j=1}^{m_\pi} Z_\pi(\mu, f_j^\pi)\otimes Z_\pi(f_j^\pi, \nu), \quad \pi\in \Rep(\G), \quad \mu, \nu \in \mcG_\pi.
    \end{equation}
\end{Prop}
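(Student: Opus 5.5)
The plan is to compute $\theta(U_\pi(\xi,\eta))$ by pairing against a spanning family of vectors in $L^2(\bar{\X})\otimes L^2(\X)$. Once the formula for $\theta$ is known, \eqref{actionmatrixcoefficients} follows formally. Indeed, by \eqref{actionmatrixcoeff},
$$(\alpha\otimes\id)(Z_\pi(\mu,\nu))=\sum_{j,k}X_\pi(\mu,e_k^\pi)\otimes U_\pi(e_k^\pi,e_j^\pi)\otimes Y_\pi(e_j^\pi,\nu).$$
Applying $\id\otimes\theta\otimes\id$ and inserting $\theta(U_\pi(e_k^\pi,e_j^\pi))=\sum_l Y_\pi(e_k^\pi,f_l^\pi)\otimes X_\pi(f_l^\pi,e_j^\pi)$ regroups the sum as $\sum_l Z_\pi(\mu,f_l^\pi)\otimes Z_\pi(f_l^\pi,\nu)$. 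By Proposition \ref{1}(1) and Proposition \ref{2}(1) both sides of the $\theta$-identity are additive under direct sums, so it suffices to treat $\pi\in\Irr(\G)$.

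\textbf{Step 1: reduce to a matrix element of $\tilde{G}$.} Since $\theta$ is normal and $\theta(x)=(I\otimes\id)(\tilde G^*(u_\G x u_\G\otimes1)\tilde G)$, it is enough to compute, for GNS vectors $\Lambda_\X(a),\Lambda_\X(b),\Lambda_\X(c),\Lambda_\X(d)$ with $a,b,c,d\in\mcO(\X)$, the quantity
$$\langle \tilde G(\Lambda_\X(a)\otimes\Lambda_\X(b)),\,(u_\G U_\pi(\xi,\eta)u_\G\otimes1)\,\tilde G(\Lambda_\X(c)\otimes\Lambda_\X(d))\rangle .$$
By definition, $\tilde G(\Lambda_\X(c)\otimes\Lambda_\X(d))=\Sigma(\Lambda_\X\otimes\Lambda_\G)(\alpha(c)(d\otimes1))$, which is an explicit finite sum of elementary tensors with $\mcO(\G)$-matrix coefficients in the $L^2(\G)$-leg. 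The operator $u_\G U_\pi(\xi,\eta)u_\G=J_{\hat\G}J_\G U_\pi(\xi,\eta)J_\G J_{\hat\G}$ acts on $\Lambda_\G(g)$ for $g\in\mcO(\G)$ as right multiplication twisted by the modular data. Concretely, $J_\G x J_\G\Lambda_\G(g)=\Lambda_\G(g\sigma_{-i/2}^\G(x)^*)$, and conjugation by $J_{\hat\G}$ brings in $R_\G$. The result is an explicit formula in $\Lambda_\G(g\cdot R_\G(\dots))$ whose inner products reduce, via the orthogonality relations \eqref{EqPW1} and \eqref{EqPW2}, to finitely many scalars.

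\textbf{Step 2: match the right-hand side.} I then compute the same matrix element for $\sum_j Y_\pi(\xi,f_j^\pi)\otimes X_\pi(f_j^\pi,\eta)$, transported to $L^\infty(\X)'\ovot L^\infty(\X)$ via $I^{-1}$, i.e.\ as $\sum_j\rho_\X(X_\pi(\delta_\X^{-1/4}f_j^\pi,\delta_\G^{1/4}\xi)^*)\otimes X_\pi(f_j^\pi,\eta)$. On GNS vectors this is computed with $\rho_\X(y^*)\Lambda_\X(a)=\Lambda_\X(a\sigma^\X_{-i/2}(y^*))$ and the modular formula \eqref{modularanalytic}. By linearity it suffices to take $a,b,c,d$ to be matrix coefficients $X_{\pi'}(\cdot,\cdot)$. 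The Galois side then produces products $\alpha(X_{\pi_1})(X_{\pi_2}\otimes1)$, and the orthogonality relations \eqref{orthogonality1}, \eqref{orthogonality2} and \eqref{EqPW1} collapse everything to sums over orthonormal bases. The key identity that should make the two sides agree is the completeness relation $\sum_j f_j^\pi\langle f_j^\pi,\cdot\rangle=\id_{\mcG_\pi}$, combined with the anti-linear bijection $\mcG_\pi\cong\Mor(\pi,\X)$.

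\textbf{Main obstacle and shortcut.} The main obstacle is bookkeeping of the modular factors $\delta_\G^{\pm1/4}$, $\delta_\X^{\pm1/4}$ and the precise action of $u_\G$. To avoid doing this twice, I would first try a structural shortcut. Proposition \ref{identities} says $(\id\otimes\alpha)\theta=(\theta\otimes\id)\Delta_\G$ and $(\bar\alpha\otimes\id)\theta=(\id\otimes\theta)\Delta_\G$. Hence $\theta(U_\pi(\xi,\eta))$ lies in the joint spectral subspace of $\bar\pi$-type in the first leg and $\pi$-type in the second leg, and it transforms exactly as $\xi\mapsto Y_\pi(\xi,\cdot)$, $\eta\mapsto X_\pi(\cdot,\eta)$. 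Using the bases $\{X_\pi(f_j,e_k)\}$ and $\{Y_\pi(e_k,f_j)\}$, as in the proof of Proposition \ref{Hamel}, this forces
$$\theta(U_\pi(\xi,\eta))=\sum_{j,l}c_{jl}\,Y_\pi(\xi,f_j^\pi)\otimes X_\pi(f_l^\pi,\eta)$$
for a fixed matrix $(c_{jl})\in B(\mcG_\pi)$, independent of $\xi$ and $\eta$. Showing that this membership really holds, i.e.\ that $\theta(U_\pi(\xi,\eta))$ lies in the algebraic tensor product $\mcO(\bar\X)\odot\mcO(\X)$ and not merely in its weak closure, is a spectral-subspace argument. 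It requires care but should follow from finite-dimensionality of the spectral subspaces.

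It then remains only to show $c=1$. For this I would evaluate a single scalar: apply $\varphi_{\bar\X}\otimes\id$ or a vector functional to both sides for one convenient choice, for instance the pairing above with $a=b=1$. On the Galois side $\tilde G(\xi_\X\otimes\Lambda_\X(d))=\Sigma(\xi_\X\otimes\ldots)$ simplifies drastically, so one can pin down $c$ using \eqref{orthogonality2}.
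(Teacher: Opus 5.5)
Several parts of your plan are fine: deducing \eqref{actionmatrixcoefficients} from the formula for $\theta$, reducing to irreducible $\pi$, and the structural step. In that step, Proposition \ref{identities} shows that $\eta\mapsto(\omega\otimes\id)\theta(U_\pi(\xi,\eta))$ lies in $\Mor(\pi,\X)$, hence in the finite-dimensional space $\mcO(\X)_\pi$, and similarly in the first leg. This gives membership in $\mcO(\bar{\X})\odot\mcO(\X)$ and the form $\sum_{j,l}c_{jl}Y_\pi(\xi,f_j^\pi)\otimes X_\pi(f_l^\pi,\eta)$. The paper does not use this argument.

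The gap is in the computation that is supposed to finish the proof.

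\emph{The four-vector pairing.} In Steps 1--2 you pair against four arbitrary GNS vectors. We have $\tilde G(\Lambda_\X(a)\otimes\Lambda_\X(b))=\Lambda_\G(a_{(1)})\otimes\Lambda_\X(a_{(0)}b)$, and $u_\G xu_\G\in L^\infty(\G)'$ acts by a twisted right multiplication $\Lambda_\G(h)\mapsto\Lambda_\G(hy)$ for some $y\in\mcO(\G)$. So your matrix element is $\varphi_\G(a_{(1)}^*c_{(1)}y)\,\varphi_\X(b^*a_{(0)}^*c_{(0)}d)$, which involves products of three and four matrix coefficients. The relations \eqref{EqPW1}, \eqref{orthogonality1}, \eqref{orthogonality2} only evaluate products of two. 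Nothing collapses unless you decompose these products into spectral components, and that needs information about the multiplication of $\mcO(\X)$ that is not available in general.

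\emph{The normalization of $(c_{jl})$.} Your proposed fix does not repair this as stated. One scalar cannot determine an $m_\pi\times m_\pi$ matrix. Applying $\varphi_{\bar{\X}}\otimes\id$ kills both sides for every nontrivial irreducible $\pi$. Inserting $\xi_\X$ so that both Galois vectors simplify also fails: the two sides become multiples of $\varphi_\G(U_\pi(\xi,\eta))$ and of $\varphi_\X(X_\pi(\cdot,\cdot)^*)$, i.e.\ $0=0$.

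\emph{The missing idea, which is the paper's.} Both $G^*(1\otimes u_\G gu_\G)G$ (by Proposition \ref{Galois}\ref{33}) and your Step 2 operator lie in $L^\infty(\X)'\ovot L^\infty(\X)$. For this algebra $\xi_\X\otimes\xi_\X$ is separating. So it suffices to compare $\langle\Lambda_\X(c)\otimes\Lambda_\X(d),\,\cdot\,(\xi_\X\otimes\xi_\X)\rangle$. Here the Galois map trivializes on one side only, while $c=X_{\pi_c}(\mu_c,\xi_c)^*$ and $d=X_{\pi_d}(\mu_d,\xi_d)$ carry the spectral content. Only two-factor products then occur, for example $\varphi_\X(d^*c_{(0)}^*)\varphi_\G(c_{(1)}^*\sigma^\G_{-i/2}(R_\G(g)))$. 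Both sides equal $\delta_{\pi_c,\pi_g}\delta_{\pi_g,\pi_d}\dim_q(\pi_c)^{-2}\langle\mu_c,\delta_\X^{1/2}\mu_d\rangle\langle\xi_g,\xi_c\rangle\langle\xi_d,\eta_g\rangle$. Doing this for all $c,d$ proves the identity outright. Your structural step explains why the answer has this shape, but it saves no work.

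A minor slip: $J_\G xJ_\G\Lambda_\G(g)=\Lambda_\G(g\,\sigma^\G_{-i/2}(x^*))$, not $\Lambda_\G(g\,\sigma^\G_{-i/2}(x)^*)$.
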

\begin{proof} Define the linear map
$$\theta': \mcO(\G)\to \mcO(\bar{\X})\odot \mcO(\X): U_\pi(\xi, \eta) \mapsto \sum_{j=1}^{m_\pi} Y_\pi(\xi, f_j^\pi)\otimes X_\pi(f_j^\pi, \eta).$$
We will show that $\theta = \theta'$ on $\mcO(\G)$.
Considering the natural isomorphism $L^\infty(\bar{\X})\cong L^\infty(\X)': \overline{x} \mapsto \rho_\X(x^*) = J_\X \pi_\X(x) J_\X$, we may as well regard $\theta'$ as a map $\mathcal{O}(\G)\to L^\infty(\X)'\ovot L^\infty(\X)$. We shall prove that
 $$G^*(1\otimes u_\G g u_\G)G = \theta'(g), \quad g \in \mathcal{O}(\G).$$
  Since $\xi_\X \otimes \xi_\X$ is separating for the von Neumann algebra $L^\infty(\X)'\ovot L^\infty(\X)\subseteq B(L^2(\X)\otimes L^2(\X))$, it is therefore sufficient to prove that
 $$G^*(1\otimes u_\G g u_\G) G(\xi_\X \otimes \xi_\X) = \theta'(g)(\xi_\X \otimes \xi_\X),$$
 or equivalently
 \begin{equation}\label{toprove}
     \langle \Lambda_\X(c)\otimes \Lambda_\X(d), G^*(1\otimes u_\G g u_\G )G(\xi_\X \otimes \xi_\X)\rangle = \langle \Lambda_\X(c)\otimes \Lambda_\X(d), \theta'(g)(\xi_\X\otimes \xi_\X)\rangle, \quad c,d \in \mathcal{O}(\X).
 \end{equation}
 We may further specify $g = U_{\pi_g}(\xi_g, \eta_g), c= X_{\pi_c}(\mu_c, \xi_c)^*$ and $d= X_{\pi_d}(\mu_d, \xi_d)$, where $\pi_g, \pi_c, \pi_d\in \Irr(\G)$. We calculate, making use of the orthogonality relations \eqref{EqPW1} and \eqref{orthogonality2}, that
 \begin{align*}
     &\langle \Lambda_\X(c)\otimes \Lambda_\X(d), G^*(1\otimes u_\G  g u_\G)G(\xi_\X \otimes \xi_\X)\rangle \\
     &= \varphi_\X(d^*c_{(0)}^*) \varphi_\G(c_{(1)}^* \sigma_{-i/2}^\G(R_\G(g)))\\
     &=\sum_{j=1}^{d_{\pi_c}} \varphi_\X(X_{\pi_d}(\mu_d, \xi_d)^*X_{\pi_c}(\mu_c, e_j^{\pi_c})) \varphi_\G(U_{\pi_c}(e_j^{\pi_c}, \xi_c)U_{\pi_g}(\eta_g, \delta_\G^{1/2} \xi_g)^*)\\
     &= \delta_{\pi_d, \pi_c}\delta_{\pi_c, \pi_g}\dim_q(\pi_c)^{-2}\sum_{j=1}^{d_{\pi_c}} \langle \mu_c, \delta_\X^{1/2}\mu_d\rangle \langle \xi_d, e_j^{\pi_c}\rangle \langle \xi_g, \xi_c\rangle \langle e_j^{\pi_c}, \eta_g\rangle\\
     &= \delta_{\pi_d, \pi_c}\delta_{\pi_c, \pi_g}\dim_q(\pi_c)^{-2} \langle \mu_c, \delta_\X^{1/2}\mu_d\rangle \langle \xi_g, \xi_c\rangle \langle \xi_d, \eta_g\rangle.
 \end{align*}
On the other hand, under the identification $\overline{L^\infty(\X)}\cong L^\infty(\X)'$, the map $\theta'$ is given by
$$\theta'(g) = \sum_{j=1}^{m_{\pi_g}} \rho_\X(X_{\pi_g}(\delta_\X^{-1/4}f_j^{\pi_g}, \delta_\G^{1/4}\xi_g)^*) \otimes X_{\pi_g}(f_j^{\pi_g}, \eta_g).$$
Consequently, making use of both the orthogonality relations \eqref{orthogonality1} and \eqref{orthogonality2}, we find:
\begin{align*}
    &\langle \Lambda_\X(c)\otimes \Lambda_\X(d), \theta'(g)(\xi_\X \otimes \xi_\X)\rangle\\
    &= \sum_{j=1}^{m_{\pi_g}} \langle \Lambda_\X(c)\otimes \Lambda_\X(d), \Lambda_\X(\sigma_{-i/2}^\X(X_{\pi_g}(\delta_\X^{-1/4}f_j^{\pi_g}, \delta_\G^{1/4}\xi_g)^*)) \otimes \Lambda_\X(X_{\pi_g}(f_j^{\pi_g}, \eta_g))\rangle\\
    &= \sum_{j=1}^{m_{\pi_g}} \varphi_\X(X_{\pi_c}(\mu_c, \xi_c) X_{\pi_g}(f_j^{\pi_g}, \delta_\G^{1/2}\xi_g)^*) \varphi_\X(X_{\pi_d}(\mu_d, \xi_d)^* X_{\pi_g}(f_j^{\pi_g}, \eta_g))\\
    &= \dim_q(\pi_c)^{-2} \delta_{\pi_c, \pi_g} \delta_{\pi_g, \pi_d}\sum_{j=1}^{m_{\pi_g}} \langle \xi_g, \xi_c\rangle \langle \mu_c, f_j^{\pi_g}\rangle \langle f_j^{\pi_g}, \delta_\X^{1/2}\mu_d\rangle \langle \xi_d, \eta_g\rangle\\
    &= \dim_q(\pi_c)^{-2} \delta_{\pi_c, \pi_g} \delta_{\pi_g, \pi_d} \langle \xi_g, \xi_c\rangle \langle \mu_c, \delta_\X^{1/2}\mu_d\rangle \langle \xi_d, \eta_g\rangle.
\end{align*}
These calculations prove \eqref{toprove}.
\end{proof}

Note that
$$\Delta_{\XGX}^r(C_\mcO(\XGX))\subseteq C_\mcO(\XGX) \otimes C_\mcO(\XGX), \quad \Delta_{\XGX}^r(L^\infty_\mcO(\XGX))\subseteq L^\infty_\mcO(\XGX)\ovot L^\infty_\mcO(\XGX).$$

The unitary antipode $R_{\XGX}: \mcO(\XGX)\to \mcO(\XGX)$ extends uniquely to a normal anti-$*$-isomorphism $R_{\XGX}^r: L^\infty_\mcO(\XGX)\to L^\infty_\mcO(\XGX)$. Indeed, using that $\varphi_{\XGX}\circ R_{\XGX}= \varphi_{\XGX}$ on $\mcO(\XGX)$, we can define the anti-unitary
$\hat{J}_{\XGX}: L^2(\XGX)\to L^2(\XGX)$ via
$$\hat{J}_{\XGX}\Lambda_{\XGX}(a)= \Lambda_{\XGX}(R_{\XGX}(a)^*),  \quad a\in \mcO(\XGX).$$ Then $x\mapsto \hat{J}_{\XGX} x^* \hat{J}_{\XGX}$ implements the desired normal extension of $R_{\XGX}$. Similarly, the  scaling group $\tau_t^{\XGX}: \mcO(\XGX)\to \mcO(\XGX)$ extends to the von Neumann algebra $L^\infty_\mcO(\XGX)$.

In conclusion, the compact quantum hypergroup $\XGX$ also has a reduced $C^*$-algebraic and a $W^*$-algebraic version. However, its reduced version does not fit into Definition \ref{complicated} (cf.\ Remark \ref{completions}), unless the counit $\epsilon_{\XGX}:\mcO(\XGX)\to \C$ is bounded for the reduced norm (see Theorem \ref{counitbounded} for a characterization). However, there is little doubt that the reduced version of $\XGX$ should qualify as a prime example of a  $C^*$-algebraic compact quantum hypergroup.

\section{Examples}\label{examples}

Fix a compact quantum group $\G$. We discuss two examples of compact quantum hypergroups of a general nature.

\subsection{Coideals}\label{coideals}  Let $L^\infty(\X)\subseteq L^\infty(\G)$ be a (right) coideal von Neumann algebra, i.e.\ $L^\infty(\X)$ is a von Neumann subalgebra of $L^\infty(\G)$ such that $\Delta_\G(L^\infty(\X))\subseteq L^\infty(\X)\ovot L^\infty(\G)$.
In that case, we write $\X = \H \backslash \G$, and we think of the object $\H$ as being a `generalized closed quantum subgroup' of $\G$. We then have the natural (ergodic) action $L^\infty(\H\backslash \G)\stackrel{\Delta_\G}\curvearrowleft \G$. Sometimes, we will also denote this action with $\alpha$.

Let us start by recalling some theory from the algebraic theory of coideals \cites{Ch18, DCDT24}. Consider the two-sided coideal $\mcO(\H\backslash \G)_+:= \mcO(\H\backslash \G)\cap \operatorname{Ker}(\epsilon_\G)\subseteq \mcO(\G)$, so that we can define the quotient coalgebra
$$\mcO(\H):= \mcO(\G)/\mcO(\G) \mcO(\H\backslash \G)_+.$$
We write $q: \mcO(\G)\to \mcO(\H)$ for the associated quotient map and $1_\H= q(1)$.
We then have
\begin{equation}\label{coidealreconstruction}
    \mcO(\H \backslash \G)= \{a\in \mcO(\G): q(a_{(1)})\otimes a_{(2)}= 1_\H \otimes a\}.
\end{equation}
The space $\mcO(\H)$ carries the involution $\dag$ defined by $q(a)^\dag = q(S_\G(a)^*)$ for $a\in \mcO(\G)$. The algebraic dual of $\mcO(\H)$ will be denoted by $\mathscr{U}_\H$, which becomes a $*$-algebra for the product and involution given by
$$(\omega \star \omega')(z)= (\omega \otimes \omega')\Delta_{\H}(z), \quad \omega^*(z)= \overline{\omega(z^\dag)}, \quad \omega, \omega'\in \mathscr{U}_{\H}, \quad z \in \mcO(\H).$$

The quotient map $q: \mcO(\G)\to \mcO(\H)$ dualizes to a $*$-algebra embedding
$\mathscr{U}_\H\hookrightarrow \mathscr{U}_\G.$ We will use this to view functionals on $\mcO(\H)$ as functionals on $\mcO(\G)$, without explicit mention.  We also recall the natural action
$$\mathscr{U}_{\G}\curvearrowright \mcH_\pi,  \quad \omega \xi = (\id \odot \omega)(U_\pi)\xi,  \quad \pi \in \Rep(\G), \quad \omega \in \mathscr{U}_\G, \quad \xi \in \mcH_\pi.$$
Through the canonical embedding $\mathscr{U}_\H\hookrightarrow \mathscr{U}_\G$, we then obtain canonical actions $\mathscr{U}_\H\curvearrowright \mcH_\pi$ as well, for every $\pi \in \Rep(\G)$.

There is a unique functional $\varphi_\H: \mcO(\H)\to \C$ (not necessarily positive, cf.\ Proposition \ref{compact quasi-subgroup}) such that 
\begin{equation}\label{invariance}
    \varphi_\H(1_\H)= 1, \quad (\varphi_\H \odot \id)\Delta_\H(c) = \varphi_\H(c) 1= (\id \odot \varphi_\H)\Delta_\H(c), \quad c \in \mcO(\H).
\end{equation}
It allows us to define the projection
\begin{equation}\label{expectation}
    E: \mcO(\G)\to \mcO(\H \backslash \G): a \mapsto (\varphi_\H  \odot \id)\Delta_\G(a).
\end{equation}
It is right $\mcO(\H\backslash \G)$-linear, preserves $\varphi_\G$ and acts on matrix coefficients via
\begin{equation}
    \label{expmat} E(U_\pi(\xi, \eta)) = U_\pi(\varphi_\H\xi, \eta), \quad \pi\in \Rep(\G), \quad \xi, \eta \in \mcH_\pi.
\end{equation}
Moreover, since $\epsilon_\G\circ E = \varphi_\H$, we see that $\varphi_\H= \epsilon_\G$ on $\mcO(\H\backslash \G)$.

In the coideal case, we can view the spaces $\mcG_\pi$ as subspaces of $\mcH_\pi$. Here is the concrete statement:
\begin{Lem} Given $\pi\in \Rep(\G)$ and $\xi \in \mcH_\pi$, the following are equivalent:
\begin{enumerate}
    \item $U_\pi(\xi, \eta)\in \mcO(\H\backslash \G)$ for all $\eta \in \mcH_\pi$.
    \item \vspace{-1.5mm}$\varphi_\H\xi = \xi$.
    \item \vspace{-1.5mm}For all $\omega \in \mathscr{U}_\H\subseteq \mathscr{U}_\G$, we have $\omega\xi= \omega(1_\H)\xi$.
\end{enumerate}
The map
    \begin{equation}\label{spectral spaces}
        \mcG_\pi\ni \mu \mapsto \xi_\mu = (\id \odot \epsilon_\G)(\mu)\in \varphi_\H \mcH_\pi
    \end{equation}
    is a well-defined unitary. We have $X_\pi(\mu, \xi) = U_\pi(\xi_\mu, \xi)$ for $\mu \in \mcG_\pi$ and $\xi \in \mcH_\pi$.
\end{Lem}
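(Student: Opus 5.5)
The plan is to prove the equivalences by passing between the three conditions through the functional $\varphi_\H$ and the projection $E$ of \eqref{expectation}, and then to derive the statement about $\mcG_\pi$ by comparing with Example \ref{exmatrixcoeff}.

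\textbf{(2)$\Rightarrow$(1).} By \eqref{expmat}, $U_\pi(\xi,\eta)=U_\pi(\varphi_\H\xi,\eta)=E(U_\pi(\xi,\eta))\in\mcO(\H\backslash\G)$.

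\textbf{(1)$\Rightarrow$(3).} Use the reconstruction \eqref{coidealreconstruction}. For $a=U_\pi(\xi,\eta)$ we get $\sum_j q(U_\pi(\xi,e_j^\pi))\otimes U_\pi(e_j^\pi,\eta)=1_\H\otimes U_\pi(\xi,\eta)$ for all $\eta$. Apply $\omega\in\mathscr{U}_\H$ in the first leg, then $\epsilon_\G$ in the second leg (or use linear independence of matrix coefficients). This gives $\omega(U_\pi(\xi,\eta))=\omega(1_\H)\langle\xi,\eta\rangle$. Read through the identification $\langle\eta',\omega\xi'\rangle=\omega(U_\pi(\eta',\xi'))$, and be careful with the order of the arguments. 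Since $\mathscr{U}_\H$ is $*$-closed and $\omega^*(1_\H)=\overline{\omega(1_\H)}$, this says $\omega^*\xi=\overline{\omega(1_\H)}\xi$ for all $\omega$. Replacing $\omega$ by $\omega^*$ gives (3).

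\textbf{(3)$\Rightarrow$(2).} Take $\omega=\varphi_\H$ and use $\varphi_\H(1_\H)=1$.

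\textbf{The map \eqref{spectral spaces}.} Since the action is the restriction of $\Delta_\G$, $\mcG_\pi$ is a subspace of the corresponding space for $L^\infty(\G)\stackrel{\Delta_\G}\curvearrowleft\G$. By Example \ref{exmatrixcoeff}, every element there has the form $\mu_\xi=\sum_j e_j^\pi\otimes U_\pi(\xi,e_j^\pi)^*$ with $\xi=\xi_\mu$, and $\mu\mapsto\xi_\mu$ is isometric. Hence $\mu_\xi\in\mcG_\pi$ if and only if $U_\pi(\xi,e_j^\pi)^*\in\mcO(\H\backslash\G)$ for all $j$, i.e.\ if and only if $U_\pi(\xi,\eta)\in\mcO(\H\backslash\G)$ for all $\eta$, using that the coideal is $*$-closed. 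By (1)$\Leftrightarrow$(2) this holds exactly when $\xi\in\varphi_\H\mcH_\pi$. Note that $\varphi_\H$ acts as an idempotent on $\mcH_\pi$, because $\varphi_\H\star\varphi_\H=\varphi_\H$ by \eqref{invariance}, so $\varphi_\H\mcH_\pi$ is the fixed space. The map is therefore an isometric bijection onto $\varphi_\H\mcH_\pi$, i.e.\ a unitary. The inner products agree, since the $\mcO(\X)$-valued inner product is the restricted one. The formula $X_\pi(\mu,\eta)=U_\pi(\xi_\mu,\eta)$ is then exactly the last line of Example \ref{exmatrixcoeff}.

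\textbf{Main obstacle.} The hard step will be (1)$\Rightarrow$(3). There the conventions for the action of $\mathscr{U}_\G$ on $\mcH_\pi$ (which leg, and linear versus antilinear slot) must be matched carefully with \eqref{coidealreconstruction}. I also need to check that $\mathscr{U}_\H$ is $*$-closed in $\mathscr{U}_\G$, which is what allows me to pass between $\omega$ and $\omega^*$.
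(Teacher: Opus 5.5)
Your proof is correct and follows the paper's outline: (2)$\Rightarrow$(1) via $E$ and \eqref{expmat}, $\omega=\varphi_\H$ for (3)$\Rightarrow$(2), and restricting the unitary \eqref{unitary} from Example \ref{exmatrixcoeff} for the last part. The one deviation is in (1)$\Rightarrow$(3), and your argument there via \eqref{coidealreconstruction} and the $*$-embedding $\mathscr{U}_\H\hookrightarrow\mathscr{U}_\G$ is valid; the paper instead gets (1)$\Rightarrow$(2) directly from $E$ being the identity on $\mcO(\H\backslash\G)$ (so $U_\pi(\xi,\eta)=U_\pi(\varphi_\H\xi,\eta)$ for all $\eta$), and (2)$\Rightarrow$(3) from $\omega\star\varphi_\H=\omega(1_\H)\varphi_\H$ by \eqref{invariance}, which avoids the adjoint and $*$-closedness bookkeeping you flagged as the main obstacle.
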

\begin{proof} The equivalence $(1)\iff (2)$ follows from \eqref{expectation} and \eqref{expmat} and the equivalence $(2)\iff (3)$ follows from \eqref{invariance}. Let us write $\alpha:= \Delta_\G\vert_{L^\infty(\H\backslash \G)}$. It follows from the equivalence $(1)\iff (2)$ that the unitary \eqref{unitary} restricts to a unitary $\varphi_\H \mcH_\pi \cong \mcG_\pi^\alpha$.
\end{proof}

Through the unitary $\mcG_\pi \cong \varphi_\H \mcH_\pi$, we transport the positive invertible operator $\delta_{\H\backslash \G}$ to the positive invertible operator $\tilde{\delta}_{\H\backslash \G} \in B(\varphi_\H \mcH_\pi)$. We then find the formula
\begin{equation}\label{v}
    \sigma_z^{\H\backslash \G}(U_\pi(\xi, \eta))= U_\pi(\tilde{\delta}_{\H\backslash \G}^{i\bar{z}/2}\xi, \delta_\G^{-iz/2}\eta), \quad \xi \in \varphi_\H \mcH_\pi, \quad \eta \in \mcH_\pi.
\end{equation}
In \cite{DCDT24}*{Lemma 1.4}, it is proven that 
\begin{equation}\label{w}
    \sigma^{\H\backslash \G}_{i}(U_\pi(\xi, \eta))= U_\pi(\varphi_\H \delta_\G^{1/2} \xi, \delta_\G^{1/2}\eta), \quad \xi \in \varphi_\H \mcH_\pi, \quad \eta \in \mcH_\pi.
\end{equation}
Consequently, comparing the expression \eqref{v} (for $z= i$) and the expression \eqref{w}, we arrive at $\tilde{\delta}_{\H\backslash \G}^{1/2} = \varphi_\H \delta_\G^{1/2}\varphi_\H$ and in particular 
$\tilde{\delta}_{\H\backslash \G}^{z}= (\varphi_\H \delta_\G^{1/2} \varphi_\H)^{2z}$ for every $z\in \C$.

We will write $\mcO(\H\backslash \G/\H):=\mcO(\H\backslash \G)\cap R_\G(\mcO(\H\backslash \G))$ and we define similarly $C(\H\backslash \G/\H)$ and $L^\infty(\H\backslash \G/\H)$.  Recall from \eqref{identification2} the canonical $*$-isomorphism
$$L^\infty(\H\backslash\G/\H)\cong L^\infty(\XGX): a \mapsto (\id \otimes \phi)\Delta_\G(a).$$

\begin{Prop}\label{needed} The $*$-isomorphism \eqref{identification2}
restricts to a $*$-isomorphism
$\mcO(\H\backslash \G/\H)\cong \mcO(\XGX)$.
Given $\pi\in \Rep(\G)$ and $\mu, \nu \in \mcG_\pi$, the element $Z_\pi(\mu, \nu)\in \mcO(\XGX)$ corresponds to the element $U_\pi(\xi_\mu, \delta_\G^{1/4} \tilde{\delta}_{\H\backslash \G}^{-1/4} \xi_\nu)$ under this $*$-isomorphism. In particular, $\mcO(\H\backslash \G/\H)$ is linearly generated by the elements $U_\pi(\xi, \delta_\G^{1/4}\eta)$ where $\pi\in \Irr(\G)$ and $\xi, \eta \in \varphi_\H\mcH_\pi$. 
\end{Prop}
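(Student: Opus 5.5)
The plan is to compute the image of each matrix coefficient $U_\pi(\xi_\mu, \delta_\G^{1/4}\tilde{\delta}_{\H\backslash \G}^{-1/4}\xi_\nu)$ under the map $a\mapsto (\id\otimes\phi)\Delta_\G(a)$ and to compare it with $Z_\pi(\mu,\nu)$. The other claims then follow from Proposition \ref{Hamel} and from injectivity of \eqref{identification2}.

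\textbf{Step 1: the candidate elements lie in $\mcO(\H\backslash\G/\H)$.} Take $\xi,\eta\in \varphi_\H\mcH_\pi$ and put $a= U_\pi(\xi,\eta')$ with $\eta'= \delta_\G^{1/4}\tilde\delta_{\H\backslash\G}^{-1/4}\eta$. By the Lemma, $a\in \mcO(\H\backslash\G)$ because $\varphi_\H\xi=\xi$. For the condition $a\in R_\G(\mcO(\H\backslash\G))$, I would rather not verify it directly. Instead, I would show that $(\id\otimes\phi)\Delta_\G(a)$ lies in $\mcO(\X)\odot\mcO(\bar\X)$ and equals $Z_\pi(\mu,\nu)\in L^\infty(\XGX)$. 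Surjectivity and injectivity of \eqref{identification2}, established in Example \ref{coidealsex}, then force $a\in L^\infty(\H\backslash\G/\H)\cap\mcO(\G)$. Using $R_\G(\mcO(\G))=\mcO(\G)$, this gives $a\in\mcO(\H\backslash\G/\H)$.

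\textbf{Step 2: the matrix computation.} We have $\Delta_\G(a)=\sum_j U_\pi(\xi,e_j)\otimes U_\pi(e_j,\eta')$, and $X_\pi(\mu,e_j)=U_\pi(\xi_\mu,e_j)$ by the Lemma. So it suffices to show
$$\phi(U_\pi(e_j,\eta')) = Y_\pi(e_j,\nu)=\overline{X_\pi(\delta_\X^{-1/4}\nu,\delta_\G^{1/4}e_j)} .$$
By \eqref{aa}, $R_\G(U_\pi(e_j,\eta')^*)=U_\pi(\delta_\G^{-1/4}\eta',\delta_\G^{1/4}e_j)$. Hence $\phi(U_\pi(e_j,\eta'))=\overline{U_\pi(\delta_\G^{-1/4}\eta',\delta_\G^{1/4}e_j)}$. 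Under the unitary $\mcG_\pi\cong\varphi_\H\mcH_\pi$, the vector $\delta_\X^{-1/4}\nu$ corresponds to $\tilde\delta_{\H\backslash\G}^{-1/4}\xi_\nu$. So what remains is
$$\delta_\G^{-1/4}\eta'=\tilde\delta_{\H\backslash\G}^{-1/4}\xi_\nu ,$$
which holds by the choice of $\eta'$. One subtlety: $\delta_\G^{-1/4}\eta'$ must be a vector in $\varphi_\H\mcH_\pi$, and here it is, since it equals $\tilde\delta^{-1/4}\xi_\nu$. This identity also explains why the correction factor $\delta_\G^{1/4}\tilde\delta^{-1/4}$ appears in the statement. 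The same computation shows that $\phi$ is applied only to elements of $R_\G(L^\infty(\X))$, as required.

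\textbf{Step 3: conclusion.} The map \eqref{identification2} is injective, and by Proposition \ref{Hamel} its image contains all the $Z_\pi(\mu,\nu)$, which span $\mcO(\XGX)$. It therefore remains to show that $\mcO(\H\backslash\G/\H)$ maps into $\mcO(\XGX)$. For $a\in\mcO(\G)$, the element $\Delta_\G(a)$ lies in $\mcO(\G)\odot\mcO(\G)$, and $(\id\otimes\phi)\Delta_\G(a)$ lies in $L^\infty(\XGX)$. Applying slice maps shows that its legs lie in $\mcO(\X)$ and $\mcO(\bar\X)$ respectively; this holds because $\mcO(\X)=L^\infty(\X)\cap\mcO(\G)$ for the spectral subspaces of a coideal. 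Hence the image lies in $\mcO(\X)\odot\mcO(\bar\X)\cap L^\infty(\XGX)=\mcO(\XGX)$. The final generation statement follows by substituting $\eta=\tilde\delta^{-1/4}\xi_\nu$ and noting that $\tilde\delta$ is invertible on $\varphi_\H\mcH_\pi$. I expect the main obstacle to be the bookkeeping of $\delta$-powers in Step 2, in particular checking that $\delta_\G^{1/4}$ applied to a vector of $\varphi_\H\mcH_\pi$ is handled consistently with \eqref{aa}.
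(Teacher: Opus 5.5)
Your proof is correct, but your surjectivity argument differs from the paper's. The paper builds an explicit inverse. For an arbitrary $z\in\mcO(\XGX)$ it sets $a:=(\id\odot\bar{\epsilon}_\G)(z)$, where $\bar{\epsilon}_\G(\bar b)=\epsilon_\G(b^*)$. The cotensor condition then gives
$$\alpha(a)\in\mcO(\H\backslash\G)\odot R_\G(\mcO(\H\backslash\G)),$$
hence $a=(\epsilon_\G\odot\id)\alpha(a)\in\mcO(\H\backslash\G/\H)$. The paper then checks that $(\id\odot\phi)\Delta_\G(a)=z$. The preimage of $Z_\pi(\mu,\nu)$ is obtained simply by applying $\id\odot\bar{\epsilon}_\G$ to $Z_\pi(\mu,\nu)$.

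You instead compute the forward image of the explicit elements $U_\pi(\xi_\mu,\delta_\G^{1/4}\tilde{\delta}_{\H\backslash\G}^{-1/4}\xi_\nu)$. You get surjectivity because these images exhaust the Hamel basis of Proposition \ref{Hamel}. The paper's route does not need Proposition \ref{Hamel}, and it is the algebraic counterpart of the von Neumann argument in Example \ref{coidealsex}. Your route is more computational. In exchange, it justifies that the forward map lands in $\mcO(\XGX)$, which the paper only calls clear. It also shows concretely that each $U_\pi(\xi,\delta_\G^{1/4}\eta)$ with $\xi,\eta\in\varphi_\H\mcH_\pi$ lies in $\mcO(\H\backslash\G/\H)$. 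The fact $L^\infty(\X)\cap\mcO(\G)=\mcO(\X)$ that you rely on is standard, but you should sketch it. Slice with normal functionals dual to a linearly independent family of second legs, then use that the spectral components of an element of $L^\infty(\X)\cap\mcO(\G)_\pi$ give morphisms in $\Mor(\pi,\X)$.

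Your detour in Step 1 through the von Neumann-level isomorphism is unnecessary. As phrased it is also slightly imprecise. The map \eqref{identification2} is only known to be injective on $L^\infty(\X)\cap R_\G(L^\infty(\X))$, and membership of $a$ there is exactly what you have not yet shown. What you really use is injectivity of $\Delta_\G$ and of $\id\otimes\phi$.

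The observation you make in Step 2 settles the point directly. Since $\delta_\G^{-1/4}\eta'=\tilde{\delta}_{\H\backslash\G}^{-1/4}\eta\in\varphi_\H\mcH_\pi$, formula \eqref{aa} gives
$$R_\G(U_\pi(\zeta,\eta'))=U_\pi(\delta_\G^{-1/4}\eta',\delta_\G^{1/4}\zeta)^*\in\mcO(\H\backslash\G)$$
for every $\zeta\in\mcH_\pi$. Taking $\zeta=\xi$ yields $a\in R_\G(\mcO(\H\backslash\G))$ immediately.
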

\begin{proof} Clearly the restriction
$$\mcO(\H\backslash \G/\H)\to \mcO(\XGX): a \mapsto a_{(1)}\otimes \overline{R_\G(a_{(2)}^*)}$$
is a well-defined injective $*$-algebra homomorphism. We need to argue it is surjective. To this end, we define the linear map
$\bar{\epsilon}_{\G}: \overline{\mcO(\H\backslash \G)}\to \C: \bar{a} \mapsto \epsilon_\G(a^*)$, and we fix $z \in \mcO(\XGX)\subseteq \mcO(\H\backslash\G)\odot \overline{\mcO(\H\backslash \G)}$. Then we consider $a:= (\id \odot \bar{\epsilon}_{\G})(z)\in \mcO(\H\backslash \G)$.  Using that $(\id \odot \bar{\epsilon}_\G)\bar{\alpha}(\bar{a})= R_\G(a^*)$ for $a\in \mcO(\H\backslash \G)$, we then find
$$\alpha(a)= (\id \odot \id \odot \bar{\epsilon}_\G)(\alpha \odot \id)(z)= (\id \odot \id \odot \bar{\epsilon}_\G)(\id \odot \bar{\alpha})(z) \in \mcO(\H\backslash \G)\odot  R_\G(\mcO(\H\backslash \G)).$$
Consequently, 
$a = (\epsilon_\G \odot \id)\alpha(a) \in R_\G(\mcO(\H\backslash \G))\cap \mcO(\H\backslash \G)= \mcO(\H\backslash \G/\H)$. But then
\begin{align*}
    (\id \odot \phi)\Delta_\G(a)&= (\id \odot \phi)(\id \odot \id \odot \bar{\epsilon}_\G)(\Delta_\G \odot \id)(z) = (\id \odot \phi) (\id \odot \id \odot \bar{\epsilon}_\G)(\id \odot \bar{\alpha})(z) = z,
\end{align*}
and the surjectivity is proven. Finally, we compute for $\pi \in \Rep(\G)$ and $\mu, \nu \in \mcG_\pi$ that
\begin{align*}
    (\id \odot \bar{\epsilon}_\G)(Z_\pi(\mu, \nu))&= \sum_{j=1}^{d_\pi} X_\pi(\mu, e_j^\pi) \bar{\epsilon}_\G(Y_\pi(e_j^\pi, \nu)) = \sum_{j=1}^{d_\pi} X_\pi(\mu,e_j^\pi) \langle \delta_\G^{1/4}e_j^\pi, \tilde{\delta}_{\H\backslash \G}^{-1/4}\xi_\nu\rangle = U_\pi(\xi_\mu, \delta_\G^{1/4}\tilde{\delta}_{\H\backslash \G}^{-1/4}\xi_\nu),
\end{align*}
which finishes the proof.
\end{proof}
It follows from the preceding result that $\mcO(\H\backslash \G/\H)$ is nothing else than the $*$-algebra of \emph{$*$-spherical functions}  considered in \cite{DCDT24}*{Definition 1.19}. 

The coassociative normal ucp map $\Delta_{\XGX}^r: L^\infty(\XGX)\to L^\infty(\XGX)\ovot L^\infty(\XGX)$ from Theorem \ref{mainsec3} transports to a coassociative normal ucp map
$$\Delta_{\H\backslash \G/\H}^r: L^\infty(\H \backslash \G/\H)\to L^\infty(\H \backslash \G/\H)\ovot L^\infty(\H \backslash \G/\H)$$
under the isomorphism \eqref{identification2}. It acts on matrix coefficients by
\begin{equation}\label{compl}
    \Delta_{\H\backslash \G/\H}(U_\pi(\xi, \delta_\G^{1/4}\eta)) = \sum_{j=1}^{m_\pi} U_\pi(\xi, \delta_\G^{1/4} f_j^\pi)\otimes U_\pi(\tilde{\delta}_{\H\backslash \G}^{-1/4} f_j^\pi, \delta_\G^{1/4}\eta), \quad \xi, \eta \in \varphi_\H \mcH_\pi,
\end{equation} 
where $\{f_j^\pi\}_{j=1}^{m_\pi}$ is an orthonormal basis for $\varphi_\H \mcH_\pi\cong \mcG_\pi$. 
On the other hand, counit and antipode are given in this picture by
$$\epsilon_{\mathbb{H}\backslash \G/\H}(U_\pi(\xi, \delta_\G^{1/4}\eta))  = \langle \xi, \tilde{\delta}_{\H\backslash \G}^{1/4}\eta\rangle, \quad S_{\H\backslash \G/\H}(U_\pi(\tilde{\delta}_{\H\backslash \G}^{1/4}\xi, \delta_\G^{1/4}\eta)) = U_\pi(\tilde{\delta}_{\H\backslash \G}^{1/4}\eta, \delta_\G^{1/4}\xi)^* \quad \xi, \eta \in \varphi_\H \mcH_\pi.$$

This explains how to turn the `double coset space' $\H\backslash\G/\H$ into a  compact quantum hypergroup beyond the case where $\H$ is a compact (quasi-)subgroup of $\G$. Let us now explain how the easier case of compact (quasi-)subgroups follows from our general construction. For this, we need the following result:

\begin{Prop}\label{compact quasi-subgroup} Let $\G$ be a compact quantum group and let $L^\infty(\H\backslash \G)\subseteq L^\infty(\G)$ be a coideal von Neumann algebra. The following are equivalent:
    \begin{enumerate}
    \item\vspace{-1.5mm} $\sigma_t^\G(L^\infty(\H\backslash \G))= L^\infty(\H\backslash \G)$ for all $t\in \R$.
    \item\vspace{-1.5mm} There exists a (unique) normal conditional expectation $F: L^\infty(\G)\to L^\infty(\H\backslash \G)$ that preserves $\varphi_\G$.
    \item\vspace{-1.5mm} $\varphi_\H \delta_\G = \delta_\G \varphi_\H$.
 \item\vspace{-1.5mm} $\delta_\G\vert_{\varphi_\H \mcH_\pi} = \tilde{\delta}_{\H\backslash \G}$ for all $\pi \in \Irr(\G)$.
    \item\vspace{-1.5mm} $\varphi_\H$ is positive, i.e.\ $\varphi_\H(a^*a)\ge 0$ for all $a\in \mcO(\G)$.
\end{enumerate}
\end{Prop}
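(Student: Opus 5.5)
We will prove the implications $(1)\Rightarrow(2)\Rightarrow(5)\Rightarrow(3)\Rightarrow(4)\Rightarrow(1)$, working throughout with the matrix coefficients $U_\pi(\xi,\eta)$ and the formulas \eqref{expmat}, \eqref{v}, \eqref{w}.

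$(1)\Rightarrow(2)$ is Takesaki's theorem: $L^\infty(\H\backslash\G)$ is globally invariant under the modular group of the faithful normal state $\varphi_\G$, so a unique $\varphi_\G$-preserving normal conditional expectation $F$ exists.

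$(2)\Rightarrow(5)$: we show $F$ restricts to the algebraic projection $E$ from \eqref{expectation}. Both $F$ and $E$ are $\varphi_\G$-preserving and right $\mcO(\H\backslash\G)$-linear idempotents onto the coideal. For $a\in\mcO(\G)$ and $b\in\mcO(\H\backslash\G)$ we get $\varphi_\G(b^*F(a))=\varphi_\G(b^*a)=\varphi_\G(b^*E(a))$, using the bimodule property of $F$ and the analogous computation for $E$ (right linearity of $E$ together with the $\varphi_\G$-invariance of $E$). By faithfulness and density this gives $F=E$ on $\mcO(\G)$. Hence
\[
\varphi_\H(a^*a)=\epsilon_\G(E(a^*a))=\epsilon_\G(F(a^*a)).
\]
Positivity would follow once we know that $\epsilon_\G$ is positive on the finite-dimensional spectral pieces. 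Instead of going through $\epsilon_\G$, we argue directly: $F$ being positive forces $E$ positive on $\mcO(\G)$, and by \eqref{expmat} this amounts to the statement that the operator $\varphi_\H\in\mathscr U_\G$ (acting on each $\mcH_\pi$) is a positive idempotent. Positivity of the idempotent $\varphi_\H$ means it is the orthogonal projection, i.e.\ $\varphi_\H^*=\varphi_\H$ in $\mathscr U_\G$, which is exactly positivity of the functional.

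$(5)\Rightarrow(3)$: a positive idempotent functional $\varphi_\H$ is a self-adjoint projection in each $B(\mcH_\pi)$. Using $\omega^*(z)=\overline{\omega(S_\G(z)^*)}$ and $S_\G=R_\G\tau_{-i/2}^\G$, self-adjointness in $\mathscr U_\G$ should be checked against the operator adjoint, which involves conjugation by $\delta_\G^{1/2}$. Combining $\varphi_\H=\varphi_\H^{\dagger}$ (operator adjoint) with $\varphi_\H\circ S_\G$-invariance, inherited from the invariance \eqref{invariance} and uniqueness of $\varphi_\H$, we expect to obtain $\varphi_\H=\delta_\G^{-1}\varphi_\H\delta_\G$.

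$(3)\Rightarrow(4)$: if $\varphi_\H$ commutes with $\delta_\G$, then $\varphi_\H\delta_\G^{1/2}\varphi_\H=\delta_\G^{1/2}|_{\varphi_\H\mcH_\pi}$. Since $\tilde\delta_{\H\backslash\G}^{1/2}=\varphi_\H\delta_\G^{1/2}\varphi_\H$, this gives (4).

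$(4)\Rightarrow(1)$: by \eqref{v}, $\sigma_t^{\H\backslash\G}$ is computed with $\tilde\delta=\delta_\G$ on $\varphi_\H\mcH_\pi$ and therefore coincides with $\sigma_t^\G$ on $\mcO(\H\backslash\G)$, by \eqref{bb}. Normality and $\sigma$-weak density then yield $\sigma_t^\G(L^\infty(\H\backslash\G))=L^\infty(\H\backslash\G)$.

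The main obstacle is $(5)\Rightarrow(3)$, where the algebraic involution on $\mathscr U_\G$ must be carefully reconciled with operator adjoints. If the direct approach fails, we will instead show $(5)\Rightarrow(1)$ by building the GNS-type expectation from positive $\varphi_\H$, or invoke known results from \cite{DCDT24} on positivity of $\varphi_\H$.
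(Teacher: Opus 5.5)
Your links $(1)\Rightarrow(2)$, $(3)\Rightarrow(4)$ and $(4)\Rightarrow(1)$ are fine and match the paper, as does the identification $F|_{\mcO(\G)}=E$. The way you finish $(2)\Rightarrow(5)$, however, is wrong.

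The idempotent $\varphi_\H$ acting on $\mcH_\pi$ is the \emph{orthogonal} projection onto $\varphi_\H\mcH_\pi$ for every coideal. Indeed, $E$ preserves $\varphi_\G$ and is right $\mcO(\H\backslash\G)$-linear, so $\varphi_\G((a-E(a))b^*)=0$ for all $b\in\mcO(\H\backslash\G)$. Take $a=U_\pi(\xi,\eta)$ and $b=U_\pi(\xi',\eta')$ with $\pi\in\Irr(\G)$ and $\xi'\in\varphi_\H\mcH_\pi$; then \eqref{EqPW1} forces $\langle(1-\varphi_\H)\xi,\xi'\rangle=0$.

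Hence $\varphi_\H^*=\varphi_\H$ in $\mathscr{U}_\G$ holds unconditionally. It therefore cannot be ``exactly positivity of the functional'', which is not automatic: it is the quasi-subgroup condition. The involution $\omega^*(z)=\overline{\omega(S_\G(z)^*)}$ has nothing to do with $\omega(a^*a)\ge0$. For instance, on $\mcO(\mathbb{T})$ the functional $z^n\mapsto 1$ for $|n|\le 1$ and $z^n\mapsto 0$ otherwise is a self-adjoint idempotent of $\mathscr{U}_{\mathbb{T}}$ that is not positive. Your claim that ``$E$ positive amounts to $\varphi_\H$ being a positive operator'' is likewise unjustified, and by the above it carries no information.

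Your hesitation about concluding from $\varphi_\H=\epsilon_\G\circ F$ (the paper's one-line conclusion) is reasonable. The missing ingredient is the \emph{complete} positivity of $F$:
\begin{itemize}
\item Write $a\in\mcO(\G)$ as $a=U_\pi(\xi,\eta)$ for a possibly reducible $\pi$. Then $a^*a=(\omega_{\eta,\eta}\otimes\id)(U_\pi^*(T\otimes1)U_\pi)$ with $T\zeta=\langle\xi,\zeta\rangle\xi$.
\item Slicing $(\id\otimes\Delta_\G)$ gives $(\id\otimes E)(U_\pi^*(T\otimes1)U_\pi)=U_\pi^*(\Omega(T)\otimes1)U_\pi$, where $\Omega(T):=(\id\otimes\varphi_\H)(U_\pi^*(T\otimes1)U_\pi)$.
\item Positivity of $\id\otimes F$ gives $\Omega(T)\ge0$, hence $\varphi_\H(a^*a)=\langle\eta,\Omega(T)\eta\rangle\ge0$.
\end{itemize}
This also justifies $\epsilon_\G(F(a^*a))\ge0$, since $F(a^*a)$ is then a sum of elements $b^*b$ with $b\in\mcO(\G)$.

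Your $(5)\Rightarrow(3)$ sketch has the complementary defect. The identity $\varphi_\H\circ S_\G=\varphi_\H$ cannot follow from invariance and uniqueness alone. It implies $\varphi_\H\circ S_\G^2=\varphi_\H\circ\tau^\G_{-i}=\varphi_\H$, which by \eqref{cc} says $\delta_\G^{-1/2}\varphi_\H\delta_\G^{1/2}=\varphi_\H$, i.e.\ (3). That would make (3) hold for every coideal, which the paper explicitly does not assume.

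Positivity must enter exactly here. A positive functional on a unital $*$-algebra is hermitian, $\varphi_\H(a^*)=\overline{\varphi_\H(a)}$. With this, the automatic identity $\varphi_\H^*=\varphi_\H$ reads $\varphi_\H\circ S_\G=\varphi_\H$, and the computation above then yields (3).

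With these two repairs your cycle closes. It gives a purely algebraic alternative to the paper's $(5)\Rightarrow(2)$, which instead extends $\varphi_\H$ to an idempotent state on $C^u(\G)$ and builds the normal conditional expectation from the half-lifted multiplicative unitary. The paper also obtains (4) from (1) directly, via uniqueness of the modular group, rather than going around a cycle.
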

\begin{proof} The equivalence $(1)\iff (2)$ follows from \cite{St20}*{Theorem 10.1}. 
$(5)\implies (2)$ If $\varphi_\H: \mcO(\G)\to \C$ is positive, then it extends to an idempotent state $\tilde{\varphi}_\H\in C^u(\G)^*$, and we obtain the normal conditional expectation
$$\tilde{E}: L^\infty(\G)\to L^\infty(\H \backslash \G): x \mapsto (\tilde{\varphi}_\H\otimes \id)(\Ww_\G^*(1\otimes x)\Ww_\G)$$
extending \eqref{expectation}, where $\Ww_\G\in M(C^u(\G)\otimes C_0(\hat{\G}))$ is the half-lifted version of the multiplicative unitary $W_\G$ \cite{Kus01}*{Proposition 5.1}. $(2)\implies (5)$ Assume that $(2)$ holds and recall the canonical projection $E: \mcO(\G)\to \mcO(\H\backslash \G)$ defined in \eqref{expectation}. If $x\in \mcO(\G)$ and $y \in \mcO(\H\backslash \G)$, we have
$$\varphi_\G((E(x)-F(x))y) = \varphi_\G(E(xy)-F(xy)) = \varphi_\G(xy)- \varphi_\G(xy) = 0,$$
where we used that $L^\infty(\H\backslash \G)$ is in the multiplicative domain of $F$ and that $E$ is right $\mcO(\H\backslash \G)$-linear.
Consequently, $F\vert_{\mcO(\G)}= E$ by faithfulness of $\varphi_\G$. Given $x\in \mcO(\G)$, we then have $\varphi_\H(x) = \epsilon_\G(E(x))= \epsilon_\G (F(x))$,
so $\varphi_\H$ is positive. $(1)\implies (5)$ If $(1)$ holds, then $\sigma_t^{\G}\vert_{L^\infty(\H\backslash \G)}= \sigma_t^{\H\backslash \G}$ for all $t\in \R$ by uniqueness of the modular group. Therefore, if $\xi \in \varphi_\H \mcH_\pi$ and $\eta \in \mcH_\pi$, we find
$$U_\pi(\tilde{\delta}_{\H\backslash \G}^{1/2}\xi, \delta_\G^{1/2}\eta)=\sigma_{i}^{\H\backslash \G}(U_\pi(\xi, \eta)) = \sigma_{i}^{\G}(U_\pi(\xi, \eta))= U_\pi(\delta_\G^{1/2}\xi, \delta_\G^{1/2}\eta),$$
whence $\delta_\G^{1/2}\xi = \tilde{\delta}_{\H\backslash \G}^{1/2}\xi$, from which we conclude that $(4)$ holds. The implication $(4)\implies (1)$ is trivial and the equivalence $(3)\iff (4)$ is clear by keeping in mind that $\tilde{\delta}_{\H\backslash \G}^{1/2}= \varphi_\H \delta_\G^{1/2}\varphi_\H$.
\end{proof}

If the equivalent conditions from Proposition \ref{compact quasi-subgroup} hold, we call $\H$ a \emph{compact quasi-subgroup} of $\G$ \cite{KS20}. In that case, $\mcO(\H\backslash \G/\H)$ is generated by the matrix coefficients $\{U_\pi(\xi, \eta): \pi\in \Irr(\G), \xi, \eta \in \varphi_\H \mcH_\pi\}$ and the formula \eqref{compl} simplifies to
$$\Delta_{\H\backslash \G/\H}(U_\pi(\xi, \eta))= \sum_{j=1}^{m_\pi} U_\pi(\xi, f_j^\pi)\otimes U_\pi(f_j^\pi, \eta), \quad \xi, \eta \in \varphi_\H \mcH_\pi.$$
In other words,
$$\Delta_{\H\backslash \G/\H}^r(z)= (E\otimes E) \Delta_\G(z), \quad z \in L^\infty_\mcO(\H \backslash \G/\H),$$
where $E: L^\infty(\G)\to L^\infty(\H\backslash \G)$ is the normal conditional expectation extending \eqref{expectation}.
On the other hand, counit, antipode and invariant state of the compact quantum hypergroup are simply given by the restrictions of the counit, antipode and invariant state of the compact quantum group $\G$.

Compact quantum hypergroups arising from compact quasi-subgroups (albeit in the purely algebraic or $C^*$-algebraic framework) were first considered in \cite{Ka01} and later generalized in \cite{Zh20}. Note also that an example of a von Neumann algebraic compact quantum hypergroup arising from a normal conditional expectation was constructed in \cite{DC09b}. 

\subsection{Fusion algebra}\label{fusion algebra} Consider now the compact quantum group $\G^{\op}$ given by $L^\infty(\G^{\op})= L^\infty(\G)$ and $\Delta_{\G^{\op}}= \Delta_\G^{\op}$. As in \cite{DCDR25}*{Section 4.1}, we then consider the ergodic action $L^\infty(\G)\stackrel{\alpha}\curvearrowleft \G^{\op}\times \G$ given by
$$\alpha: L^\infty(\G)\to L^\infty(\G)^{\ovot 3}: x\mapsto \Delta_\G^{(2)}(x)_{213}.$$ 

We give a complete description of the spectral data associated with this ergodic action. Note that $\Delta_\G(L^\infty(\G))\subseteq L^\infty(\G^{\op}\times \G)$ defines a coideal von Neumann subalgebra of $L^\infty(\G^{\op}\times \G)$ and the $*$-isomorphism $\Delta_\G: (L^\infty(\G), \alpha)\to (\Delta_\G(L^\infty(\G)), \Delta_{\G^{\op}\times \G})$ is $\G^{\op}\times \G$-equivariant. Thus, we could in principle use the results in Subsection \ref{coideals} to do this. Rather, we prefer to give a self-contained ad-hoc approach. 

Given $\pi\in \Irr(\G)$, write $\pi^{*}\in \Irr(\G^{\op})$ for the irreducible $\G^{\op}$-representation determined  by $U_{\pi^*}:= U_\pi^*$. We then have $\delta_{\G^{\op}}^{\pi^*}= (\delta_\G^\pi)^{-1}$ as operators on $\mcH_\pi$.
The contragredient $\overline{\pi^*}$ of $\pi^*\in\Irr(\G^{\op})$ will be denoted by $\tilde{\pi}$, so that
$$U_{\tilde{\pi}}(\bar{\xi}, \bar{\eta})= U_\pi(\delta_\G^{-1/4}\eta, \delta_\G^{1/4}\xi), \quad \xi,\eta \in \mcH_\pi.$$
In particular, $\mcO(\G^{\op})_{\tilde{\pi}}= \mcO(\G)_\pi$ for every $\pi \in \Irr(\G)$.
The irreducible representations of $\G^{\op}\times \G$ are given by $
\tilde{\pi}\times \pi' := (\overline{\mcH_\pi}\otimes \mcH_{\pi'}, U_{\tilde{\pi},13}U_{\pi',24})$ where $\pi, \pi'\in \Irr(\G)$. Combining these facts, it is straightforward to see that the spectral subspaces of $L^\infty(\G)\stackrel{\alpha}\curvearrowleft \G^{\op}\times \G$ are given for $\pi, \pi'\in \Irr(\G)$ by
\begin{equation}\label{spectralsubspaces}
    \mcO(\G)_{\tilde{\pi}\times \pi'}= \begin{cases}
        \mcO(\G)_\pi & \pi = \pi'\\
        0 & \pi\ne \pi'
    \end{cases}
\end{equation}

\begin{Prop}\label{spectraldata}
    Given $\pi, \pi'\in \Irr(\G)$, write $\mu_\pi:= \dim_q(\pi)^{-1/2}\sum_{j,k=1}^{d_\pi} \overline{e_j^\pi}\otimes e_k^\pi\otimes U_\pi(\delta_\G^{-1/4}e_j^\pi, e_k^\pi)^*$. Then
    $$\mcG_{\tilde{\pi}\times \pi'}= \begin{cases}
        \C \mu_\pi & \pi = \pi'\\
        0 & \pi\ne \pi'
    \end{cases}$$
    and $\|\mu_\pi\|=1$. Moreover, given $\xi, \eta \in \mcH_{\pi}$, we have $X_{\tilde{\pi}\times \pi}(\mu_\pi, \overline{\xi}\otimes \eta)= \dim_q(\pi)^{-1/2} U_\pi(\delta_\G^{-1/4}\xi, \eta)$ and $$Z_{\tilde{\pi}\times \pi}(\mu_\pi, \mu_\pi)= \dim_q(\pi)^{-1} \sum_{j,k=1}^{d_\pi} U_\pi(e_j^\pi, e_k^\pi)\otimes \overline{U_{\bar{\pi}}(\overline{e_j^\pi}, \overline{e_k^\pi})^*}.$$
    \end{Prop}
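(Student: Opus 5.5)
The plan is to work directly from the defining condition of $\mcG_\rho$ for $\rho=\tilde{\pi}\times\pi'$, namely $(\id\otimes\alpha)(\mu)=U_{\rho,13}^*\mu_{12}$, and from the description \eqref{spectralsubspaces} of the spectral subspaces.

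First, the vanishing statement is immediate. We have $\mcG_\rho\subseteq \mcH_\rho\odot\mcO(\X)_{\bar\rho}$, and $\bar\rho=\overline{\tilde\pi}\times\overline{\pi'}$ is of the form $\tilde{\sigma}\times\sigma'$ with $\sigma\cong\bar\pi$ and $\sigma'\cong\bar{\pi}'$. By \eqref{spectralsubspaces} this spectral subspace is zero unless $\bar\pi\cong\bar\pi'$, that is, unless $\pi=\pi'$.

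For $\pi=\pi'$, the anti-linear bijection $\mcG_\rho\cong\Mor(\rho,\X)$ gives $\dim\mcG_\rho=\dim\Mor(\rho,\X)$. This equals the multiplicity of $\rho$ in $\mcO(\G)$ divided by $\dim\mcH_\rho=d_\pi^2$. Since $\mcO(\G)_\pi$ has dimension $d_\pi^2$ and is the whole $\rho$-spectral subspace, this multiplicity is $1$, so $m_\rho=1$. It then suffices to check that $\mu_\pi$ is a nonzero element of $\mcG_\rho$ of norm $1$.

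\begin{itemize}
\item \emph{Membership.} Write $\mu_\pi=\sum_{j,k}\overline{e_j^\pi}\otimes e_k^\pi\otimes x_{jk}$ with $x_{jk}$ proportional to $U_\pi(\delta_\G^{-1/4}e_j^\pi,e_k^\pi)^*$. Apply $\alpha=\Delta^{(2)}_\G(\cdot)_{213}$, expand via \eqref{comultiplication action}, and compare with $U^*_{\tilde\pi,13}U^*_{\pi,24}$ using the formula $U_{\tilde\pi}(\bar\xi,\bar\eta)=U_\pi(\delta_\G^{-1/4}\eta,\delta_\G^{1/4}\xi)$. This is coordinatewise the criterion \eqref{coordinates}.
\item \emph{Norm.} Compute $\langle\mu_\pi,\mu_\pi\rangle=\dim_q(\pi)^{-1}\sum_{j,k}U_\pi(\delta_\G^{-1/4}e_j,e_k)U_\pi(\delta_\G^{-1/4}e_j,e_k)^*$. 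This is a scalar, so evaluating with $\varphi_\G$ and applying \eqref{EqPW1} gives $\dim_q(\pi)^{-2}\sum_{j,k}\langle\delta_\G^{-1/4}e_j,\delta_\G^{-1/4}e_j\rangle\langle e_k,\delta_\G^{-1/2}e_k\rangle=1$, using $\operatorname{Tr}(\delta_\G^{-1/2})=\dim_q(\pi)$.
\end{itemize}

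Next, $X_\rho(\mu_\pi,\bar\xi\otimes\eta)=\langle\mu_\pi,\bar\xi\otimes\eta\otimes1\rangle$. This is the sum over $j,k$ of $\langle\bar e_j,\bar\xi\rangle\langle e_k,\eta\rangle=\langle\xi,e_j\rangle\langle e_k,\eta\rangle$ times $\dim_q(\pi)^{-1/2}U_\pi(\delta_\G^{-1/4}e_j,e_k)$, which sums to $\dim_q(\pi)^{-1/2}U_\pi(\delta_\G^{-1/4}\xi,\eta)$ by sesquilinearity.

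Finally, I would expand $Z_\rho(\mu_\pi,\mu_\pi)=\sum_{j,k}X_\rho(\mu_\pi,\bar e_j\otimes e_k)\otimes Y_\rho(\bar e_j\otimes e_k,\mu_\pi)$. Here $Y_\rho(\zeta,\mu)=\overline{X_\rho(\delta_\X^{-1/4}\mu,\delta_{\G^{\op}\times\G}^{1/4}\zeta)}$.
\begin{itemize}
\item Since $\mcG_\rho$ is one-dimensional, $\delta_\X$ acts on it by a positive scalar. Computing $\sigma_\X$ on $X_\rho(\mu_\pi,\cdot)$ via \eqref{determines uniquely} against the known modular group of $\varphi_\G$ (the invariant state here is $\varphi_\G$) gives $\delta_\X=1$ on $\mcG_\rho$.
\item $\delta_{\G^{\op}\times\G}$ on $\overline{\mcH_\pi}\otimes\mcH_\pi$ is $\delta_{\tilde\pi}\otimes\delta_\G^\pi$, and $\delta_{\tilde{\pi}}$ corresponds to $\overline{\delta_\G^{\pi}}$ up to the conjugation conventions.
\end{itemize}
Substituting gives $Y$ in terms of $\overline{U_\pi(\delta_\G^{-1/4}\delta^{\pm1/4}e_j,\delta_\G^{1/4}e_k)}$. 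The $\delta$-powers must then be re-indexed, choosing $e_j$ to diagonalize $\delta_\G$, and the result rewritten via $U_\pi(\xi,\eta)^*=U_{\bar\pi}(\overline{\delta^{1/4}\xi},\overline{\delta^{-1/4}\eta})$ into the stated form $\overline{U_{\bar\pi}(\bar e_j,\bar e_k)^*}$.

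The main obstacle is this bookkeeping of the $\delta$-exponents for $\tilde\pi$ and the check that $\delta_\X=1$. I would work in a $\delta_\G$-eigenbasis so that all $\delta$-powers become scalars; the diagonal terms should then cancel to match the claimed formula.
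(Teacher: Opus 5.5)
Your proposal is correct, but for the key claim $\mcG_{\tilde\pi\times\pi}=\C\mu_\pi$ it argues differently from the paper.

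Work in a $\delta_\G$-eigenbasis, $\delta_\G e_j^\pi=\delta_j e_j^\pi$, with $u_{st}=U_\pi(e_s^\pi,e_t^\pi)$.

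\textbf{The paper's route.} It takes an arbitrary $\mu=\sum\lambda_{jkst}\,\overline{e_j^\pi}\otimes e_k^\pi\otimes u_{st}^*$ and expands both sides of the defining relation of $\mcG_{\tilde\pi\times\pi}$. Linear independence of matrix coefficients then forces $\lambda_{jkst}=0$ unless $(s,t)=(j,k)$, and forces $\lambda_{jkjk}\delta_j^{1/4}$ to be constant. This \emph{derives} the generator $\mu_\pi$ by self-contained linear algebra. Membership of $\mu_\pi$ and $\|\mu_\pi\|=1$ are left implicit.

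\textbf{Your route.} You get $m_{\tilde\pi\times\pi}=1$ from the count $m_\rho d_\rho=\dim\mcO(\X)_\rho=\dim\mcO(\G)_\pi=d_\pi^2$, using the Hamel basis $\{X_\rho(f_j^\rho,e_k^\rho)\}$, irreducibility of $\rho=\tilde\pi\times\pi$ and \eqref{spectralsubspaces}. You then only have to \emph{verify} the given candidate, and both checks are right. For the $(j,k)$-coordinate, both sides of \eqref{coordinates} equal $\dim_q(\pi)^{-1/2}\delta_j^{-1/4}\sum_{s,t}u_{st}^*\otimes u_{js}^*\otimes u_{tk}^*$. Your norm computation via \eqref{EqPW1} indeed gives $1$.

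\textbf{Comparison.} Your route avoids solving the coefficient system and makes the normalization explicit. The price is that it relies on the isotypic-decomposition fact and needs $\mu_\pi$ in advance. Only $\dim\mcO(\G)_{\tilde\pi\times\pi}\le d_\pi^2$ is really needed, since your nonzero $\mu_\pi$ supplies the lower bound. The remaining steps match the paper: vanishing for $\pi\neq\pi'$, the formula for $X$, $\delta_\X=1$ via $\sigma_\X=\sigma_\G$, and the computation of $Z$.

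\textbf{The hedged exponent.} We have $\delta_{\tilde\pi}\overline{\xi}=\overline{\delta_\G\xi}$, from $\delta^{\pi^*}_{\G^{\op}}=(\delta^\pi_\G)^{-1}$ and the rule for conjugate representations. Hence $Y_{\tilde\pi\times\pi}(\overline{e_j^\pi}\otimes e_k^\pi,\mu_\pi)=\dim_q(\pi)^{-1/2}\,\overline{U_\pi(e_j^\pi,\delta_\G^{1/4}e_k^\pi)}$. Nothing needs to cancel. You simply move the real scalar $\delta_j^{-1/4}\delta_k^{1/4}$ into the conjugated factor and use $U_\pi(\delta_\G^{-1/4}e_j^\pi,\delta_\G^{1/4}e_k^\pi)=U_{\bar\pi}(\overline{e_j^\pi},\overline{e_k^\pi})^*$.
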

\begin{proof}  Given $\pi, \pi'\in \Irr(\G)$, it follows from \eqref{spectralsubspaces}  that
$\mcG_{\tilde{\pi}\times \pi'}\subseteq \overline{\mcH_{\pi}}\odot \mcH_\pi \odot \mcO(\G)_{\tilde{\pi}\times \pi'}^*$. In particular, $\mcG_{\tilde{\pi}\times \pi'}= 0$ if $\pi \ne \pi'$ and $\mcG_{\tilde{\pi}\times \pi}\subseteq \overline{\mcH_\pi}\odot \mcH_\pi \odot \mcO(\G)_\pi^*$. 

We may assume without loss of generality that $\{e_j^\pi\}_{j=1}^{d_\pi}$ is an orthonormal basis of eigenvectors for $\delta_\G$, say $\delta_\G e_j^\pi= \delta_j e_j^\pi$ for $1\le j \le d_\pi$. We also employ the notation $u_{st}:= U_\pi(e_s^\pi, e_t^\pi)$ for $1\le s,t \le d_\pi$. Consider $\mu\in  \mcG_{\tilde{\pi}\times \pi}$. Then we may write  $\mu = \sum_{j,k,s,t=1}^{d_\pi} \lambda_{jkst}\overline{e_j^\pi}\otimes e_k^\pi \otimes u_{st}^*$ for certain scalars $\lambda_{jkst}\in \C$. On the one hand
\begin{align*}
    (\id \odot \id \odot \alpha)(\mu)&= \sum_{j,k,s,t,p,q=1}^{d_\pi} \lambda_{jkst}\overline{e_j^\pi}\otimes e_k^\pi\otimes u_{pq}^*\otimes u_{sp}^*\otimes u_{qt}^*,
\end{align*}
and on the other hand
\begin{align*}U_{\tilde{\pi}\times \pi, 1245}^*\mu_{123}&= \sum_{j,k,s,t,p,q=1}^{d_\pi}  \lambda_{stpq} \delta_j^{-1/4}\delta_s^{1/4} \overline{e_j^\pi}\otimes e_k^\pi\otimes u_{pq}^* \otimes u_{js}^*\otimes u_{tk}^*. \end{align*}
Consequently, it follows that for every $1\le j,k,p,q\le d_\pi$, 
$$\sum_{s,t=1}^{d_\pi} \lambda_{jkst}  u_{sp}^*\otimes u_{qt}^* = \sum_{s,t=1}^{d_\pi} \lambda_{stpq} \delta_j^{-1/4}\delta_s^{1/4} u_{js}^*\otimes u_{tk}^*.$$
From this, it  follows that $\lambda_{jkst}=0$ if either $t\ne k$ or $j\ne s$, and that $\lambda_{jkjk}\delta_j^{1/4}= \lambda_{pqpq}\delta_p^{1/4}$. Therefore,
\begin{align*}
    \mu = \sum_{j,k=1}^{d_\pi} \lambda_{jkjk} \overline{e_j^\pi}\otimes e_k^\pi \otimes U_\pi(e_j^\pi, e_k^\pi)^*= \lambda_{1111}\delta_1^{1/4}\sum_{j,k=1}^{d_\pi} \delta_j^{-1/4} \overline{e_j^\pi}\otimes e_k^\pi \otimes U_\pi(e_j^\pi, e_k^\pi)^* \in \C \mu_\pi.
\end{align*}
Consequently, $\mcG_{\tilde{\pi}\times  \pi}= \C \mu_\pi$. If $\xi, \eta \in \mcH_\pi$, we have
\begin{align*}
    X_{\tilde{\pi}\times \pi}(\mu_\pi, \overline{\xi}\otimes \eta)&= \dim_q(\pi)^{-1/2}\sum_{j,k=1}^{d_\pi}\langle \overline{e_j^\pi}, \overline{\xi}\rangle \langle e_k^\pi, \eta\rangle U_{\pi}(\delta_\G^{-1/4}e_j^\pi, e_k^\pi)= \dim_q(\pi)^{-1/2} U_\pi(\delta_\G^{-1/4}\xi, \eta).
\end{align*}
Since $\varphi_\X= \varphi_\G$, we have $\sigma_\X= \sigma_\G$, and consequently
\begin{align*}
    \sigma_\X(X_{\tilde{\pi}\times \pi}(\mu_\pi, \overline{\xi}\otimes \eta))&= \dim_q(\pi)^{-1/2}\sigma_\G(U_\pi(\delta_\G^{-1/4}\xi, \eta))\\
    &= \dim_q(\pi)^{-1/2}U_\pi(\delta_\G^{-3/4}\xi, \delta_\G^{-1/2}\eta)\\
    &= X_{\tilde{\pi}\times \pi}(\mu_\pi, \overline{\delta_\G^{-1/2}\xi} \otimes \delta_\G^{-1/2}\eta)= X_{\tilde{\pi}\times \pi}(\mu_\pi, \delta_{\G^{\op}\times \G}^{-1/2}(\overline{\xi}\otimes \eta)).
\end{align*}
It therefore follows that $\delta_\X = 1$. We can then calculate
\begin{align*}
    Z_{\tilde{\pi}\times \pi} (\mu_\pi, \mu_\pi)&= \sum_{j,k=1}^{d_\pi} X_{\tilde{\pi}\times\pi}(\mu_\pi, \overline{e_j^\pi}\otimes e_k^\pi)\otimes \overline{X_{\tilde{\pi}\times \pi}(\mu_\pi, \delta_{\G^{\op}\times \G}^{1/4}(\overline{e_j^\pi}\otimes e_k^\pi))}\\
    &= \dim_q(\pi)^{-1} \sum_{j,k=1}^{d_\pi} U_\pi(\delta_\G^{-1/4}e_j^\pi, e_k^\pi)\otimes \overline{U_\pi(e_j^\pi, \delta_\G^{1/4}e_k^\pi)}\\
    &= \dim_q(\pi)^{-1}\sum_{j,k=1}^{d_\pi} U_\pi(e_j^\pi, e_k^\pi) \otimes \overline{U_\pi(\delta_\G^{-1/4}e_j^\pi, \delta_\G^{1/4}e_k^\pi)}\\
    &= \dim_q(\pi)^{-1} \sum_{j,k=1}^{d_\pi} U_\pi(e_j^\pi, e_k^\pi)\otimes \overline{U_{\bar{\pi}}(\overline{e_j^\pi}, \overline{e_k^\pi})^*}.
\end{align*}
These calculations finish the proof.
\end{proof} 

Note now that
\begin{align*}
    L^\infty(\G\times_{\G^{\op}\times \G}\bar{\G})&= L^\infty(\G)\stackrel{\G^{\op}\times \G}\square L^\infty(\bar{\G})\\
    &\cong \Delta_\G(L^\infty(\G))\stackrel{\G^{\op}\times \G}\square \overline{\Delta_\G(L^\infty(\G))}\\
    &\cong \Delta_\G(L^\infty(\G))\cap R_{\G^{\op}\times \G}(\Delta_\G(L^\infty(\G)))\\
&= \Delta_\G(L^\infty(\G))\cap \Delta_\G^{\op}(L^\infty(\G)) \\
&\cong \{x\in L^\infty(\G)\mid \Delta_\G(x)= \Delta_\G^{\op}(x)\},
\end{align*}
where the second isomorphism follows from Example \ref{identification2} and the last isomorphism follows from the simple fact that if $\Delta_\G(x)= \Delta_\G^{\op}(y)$ for $x,y \in L^\infty(\G)$, then necessarily $x=y$.

Let us write $\mcO(\operatorname{Fus}[\G]):= \{a\in \mcO(\G): \Delta_\G(a)= \Delta_\G^{\op}(a)\}$, which is a unital $*$-subalgebra of $\mcO(\G)$ known as the \emph{fusion algebra} or the \emph{character algebra} of $\G$. It has a Hamel basis given by the characters $\chi(\pi):= \sum_{j=1}^{d_\pi} U_\pi(e_j^\pi, e_j^\pi)$, where $\pi\in \Irr(\G)$. We similarly define $C(\operatorname{Fus}[\G])$ and $L^\infty(\operatorname{Fus}[\G])$ for its $C^*/W^*$-version. The canonical $*$-isomorphism $L^\infty(\operatorname{Fus}[\G])\cong L^\infty(\G\times_{\G^{\op}\times \G}\bar{\G})$ considered above maps the character $\chi(\pi)$ to the element $\sum_{j,k=1}^{d_\pi} U_\pi(e_j^\pi, e_k^\pi)\otimes \overline{U_{\bar{\pi}}(\overline{e_j^\pi}, \overline{e_k^\pi})^*} = \dim_q(\pi) Z_{\tilde{\pi}\times \pi}(\mu_\pi, \mu_\pi)$ (cf.\ \cite{DCDR25}*{Proposition 4.2}).
In particular, the matrix coefficient $Z_{\tilde{\pi}\times \pi}(\mu_\pi, \mu_\pi)$ corresponds exactly to the normalized character $\chi_q(\pi):=\dim_q(\pi)^{-1}\chi(\pi)$ through this isomorphism. The compact quantum hypergroup structure of $\G\times_{\G^{\op}\times \G}\bar{\G}$ then transports to a compact quantum hypergroup structure on $\operatorname{Fus}(\G)$, given on generators by
$$\Delta_{\operatorname{Fus}(\G)}(\chi_q(\pi))= \chi_q(\pi)\otimes  \chi_q(\pi), \quad \epsilon_{\operatorname{Fus}(\G)}(\chi_q(\pi))= 1, \quad S_{\operatorname{Fus}(\G)}(\chi_q(\pi))= \chi_q(\pi)^*, \quad \pi \in \Irr(\G).$$

\begin{Rem}\label{completionss}
    Note that the equality $L_\mcO^\infty(\G\times_{\G^{\op}\times \G}\bar{\G})= L^\infty(\G\times_{\G^{\op}\times \G}\bar{\G})$ is equivalent with the equality $L^\infty(\operatorname{Fus}[\G])= L^\infty_\mcO(\operatorname{Fus}[\G])$, which was proven in \cite{AC17}*{Theorem 3.7}, by making use of the traciality of the canonical faithful normal state. In fact, returning to the case where $L^\infty(\X)$ is an arbitrary ergodic $\G$-$W^*$-algebra, one can also show that $L^\infty_\mcO(\XGX)= L^\infty(\XGX)$ when $\varphi_{\XGX}$ is tracial on $L^\infty(\XGX)$ (or more generally when $L^\infty_\mcO(\XGX)$ is invariant under the modular group of $L^\infty(\XGX)$ associated to $\varphi_{\XGX}$). Determining if $C_\mcO(\XGX)= C(\XGX)$ is an even more difficult question, related to a long-standing open question of Woronowicz \cite{Wor87a}*{Section 5}.
\end{Rem}

\section{Coamenability}\label{coamenable}

Given a (reduced) compact quantum group $\G$, we have the following equivalences \cites{BMT01, BT03, Tom06, Cr17, DH24, DCDR25, DR26}:
\begin{enumerate}
    \item\vspace{-1.5mm} $\hat{\G}$ is amenable, or equivalently $L^\infty(\G)$ is $\G$-injective.
    \item\vspace{-1.5mm} $\G$ is coamenable, or equivalently $L^\infty(\G)$ is strongly $\G$-injective.
        \item \vspace{-1.5mm}The counit on $\mcO(\G)$ is bounded for the reduced norm.
    \item \vspace{-1.5mm}$C^u(\G)\cong C(\G)$.
    \item \vspace{-1.5mm} The Banach algebra $C(\G)^*$ is unital.
\end{enumerate}

We generalize this result to the setting of ergodic actions, in the sense that the above characterization is recovered if $\X = \G$. 

Recall from Section \ref{universalversion} that every $\mcH\in \Corr^\G(L^\infty(\X), L^\infty(\X))$ gives rise to a unital $*$-representation $\theta_\square^\mcH: \mcO(\XGX)\to B(U_\mcH(\varphi_\G)\mcH)$.

\begin{Prop} Considering the coarse $\G$-$L^\infty(\X)$-$L^\infty(\X)$-correspondence $C_{L^\infty(\X)}^\G$ (see \eqref{coarse}), we have \begin{equation}\label{normcoarse}
    \|\theta_{\square}^{C_{L^\infty(\X)}^\G}(z)\| = \|z\|_r, \quad z \in \mcO(\XGX).
\end{equation}
\end{Prop}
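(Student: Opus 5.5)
Throughout, write $\mcH:=C_{L^\infty(\X)}^\G=L^2(\X)\otimes L^2(\G)\otimes L^2(\X)$, with
\[
\pi_\mcH(a)=(\pi_\X\otimes\id)\alpha(a)\otimes 1,\qquad \rho_\mcH(a)=1\otimes(\rho_\G\otimes\rho_\X)(\alpha^{\op}(a)),\qquad U_\mcH=V_{\G,24}.
\]
The plan is to identify the corner $U_\mcH(\varphi_\G)\mcH$ with $L^2(\X)\otimes L^2(\bar{\X})$. Under this identification, $\theta_\square^\mcH$ should become unitarily equivalent to the inclusion $\mcO(\XGX)\subseteq L^\infty(\X)\ovot L^\infty(\bar{\X})\subseteq B(L^2(\X)\otimes L^2(\bar{\X}))$, at least up to an amplification. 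Amplification does not change norms, so \eqref{normcoarse} would follow.

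\textbf{Step 1: the corner.} Since $U_\mcH=V_{\G,24}$ and $(\id\otimes\varphi_\G)(V_\G)$ is the projection onto $\C\xi_\G$ (the invariant vectors of the right regular representation), we get $U_\mcH(\varphi_\G)=1\otimes p_{\xi_\G}\otimes 1$. Hence $U_\mcH(\varphi_\G)\mcH=L^2(\X)\otimes\C\xi_\G\otimes L^2(\X)\cong L^2(\X)\otimes L^2(\X)$.

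\textbf{Step 2: compress the formula.} Next we compress the formula \eqref{usethis} for $\theta_\square^\mcH(Z_\pi(\mu,\nu))$ to this subspace. We use \eqref{actionmatrixcoeff} to expand $\alpha(X_\pi(\mu,e_j^\pi))$ and the analogous expansion for $\alpha^{\op}$ of $X_\pi(\delta_\X^{-1/4}\nu,\delta_\G^{-1/4}e_j^\pi)^*$. Sandwiching with $p_{\xi_\G}$ in the $L^2(\G)$ leg produces a scalar $\langle\xi_\G,U_\pi(e_k,e_j)\rho_\G(\cdots)\xi_\G\rangle$, which is a Haar state value of a product of two matrix coefficients. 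The orthogonality relations \eqref{EqPW1}/\eqref{EqPW2} then evaluate the sum over $j$. I expect the result to be
\[
\theta^\mcH_\square(Z_\pi(\mu,\nu))\cong \sum_k \pi_\X(X_\pi(\mu,e_k^\pi))\otimes \rho_\X(X_\pi(\delta_\X^{-1/4}\nu,\delta_\G^{1/4}e_k^\pi)^*)\cdot c,
\]
where $c$ collects the remaining modular factors. Once $\rho_\X(x^*)$ is identified with $\bar{x}\in L^\infty(\bar{\X})$ via $I$, this is exactly $\sum_k X_\pi(\mu,e_k)\otimes Y_\pi(e_k,\nu)=Z_\pi(\mu,\nu)$ acting on $L^2(\X)\otimes L^2(\X)\cong L^2(\X)\otimes L^2(\bar{\X})$.

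This bookkeeping with $\delta_\G$ powers and the $R_\G$ hidden in $\rho_\G$ is the main obstacle. If the direct computation gets messy, I would instead use the Galois isometry. Proposition \ref{Galois} \ref{22}--\ref{44} suggest a cleaner route: a unitary $L^2(\X)\otimes L^2(\G)\otimes L^2(\X)\to L^2(\X)\otimes L^2(\G)\otimes L^2(\X)$ built from $U_\X$ and $V_\G$ that untwists $\pi_\mcH$ and $\rho_\mcH$ into $\pi_\X\otimes 1\otimes1$ and $1\otimes1\otimes\rho_\X$, while carrying $V_{\G,24}$ to a representation whose fixed vectors are computable.

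\textbf{Step 3: conclude.} On $\mcO(\XGX)$ the compressed operator is, by the previous step, the restriction of $z\in L^\infty(\X)\ovot L^\infty(\bar{\X})$ (possibly tensored with an identity), so its norm equals $\|z\|_r$. It suffices to verify the identification on the spanning set $Z_\pi(\mu,\nu)$ from Proposition \ref{Hamel}, since both sides are linear in $z$.
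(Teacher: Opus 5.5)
Your route is the paper's: identify the corner $V_{\G,24}(\varphi_\G)\bigl(L^2(\X)\otimes L^2(\G)\otimes L^2(\X)\bigr)=L^2(\X)\otimes\C\xi_\G\otimes L^2(\X)$ and compute $\theta_\square^{C^\G_{L^\infty(\X)}}(Z_\pi(\mu,\nu))$ on it from \eqref{usethis}. Then recognize the result as the image of $Z_\pi(\mu,\nu)$ under the faithful, hence isometric, representation $x\otimes\bar{y}\mapsto\pi_\X(x)\otimes\rho_\X(y)^*$ of $L^\infty(\X)\ovot L^\infty(\bar{\X})$ on $L^2(\X)\otimes L^2(\X)$.

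What is missing is the computation itself. Step 3 needs your factor $c$ to be exactly $1$ for every $\pi$. If it were a $\pi$-dependent scalar $c_\pi$, then $\theta_\square$ would be $Z_\pi(\mu,\nu)\mapsto c_\pi Z_\pi(\mu,\nu)$ followed by the inclusion rather than the inclusion itself, and the norm equality would not follow.

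The computation closes with no leftover factor.
\begin{itemize}
\item Using $\rho_\G(b)\xi_\G=J_\G b^*\xi_\G=\Lambda_\G(\sigma^\G_{-i/2}(b))$ and \eqref{bb}, one gets $\rho_\G(U_\pi(e_l^\pi,\delta_\G^{-1/4}e_j^\pi)^*)\xi_\G=\Lambda_\G(U_\pi(\delta_\G^{1/4}e_l^\pi,e_j^\pi)^*)$.
\item Then $\sum_j U_\pi(e_k^\pi,e_j^\pi)U_\pi(\delta_\G^{1/4}e_l^\pi,e_j^\pi)^*=\langle e_k^\pi,\delta_\G^{1/4}e_l^\pi\rangle 1$ by unitarity of $U_\pi$. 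In your compression version this is equivalently \eqref{EqPW1} together with $\operatorname{Tr}(\delta_\G^{-1/2})=\dim_q(\pi)$.
\end{itemize}
So the middle leg returns an exact multiple of $\xi_\G$. The operator maps the corner into itself, so compression and restriction agree, and $c=1$. Absorbing the sum over $l$ then produces precisely the $\rho_\X(X_\pi(\delta_\X^{-1/4}\nu,\delta_\G^{1/4}e_k^\pi)^*)$ you predicted. There is no amplification, and the Galois-isometry fallback is not needed.
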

\begin{proof}
     Making use of formula \eqref{usethis}, we find for $\pi \in \Rep(\G)$ and $\mu, \nu \in \mcG_\pi$ that
\begin{align*}
    \theta_\square^{C_{L^\infty(\X)}^\G}(Z_\pi(\mu, \nu)) &= \sum_{j=1}^{d_\pi} ((\pi_\X \otimes \id)\alpha(X_\pi(\mu, e_j^\pi)) \otimes 1)(1\otimes (\rho_\G \otimes \rho_\X)\alpha^{\op}(X_\pi(\delta_\X^{-1/4}\nu, \delta_\G^{-1/4}e_j^\pi)^*))\\
    &= \sum_{j,k,l=1}^{d_\pi} \pi_\X(X_\pi(\mu, e_k^\pi)) \otimes U_\pi(e_k^\pi, e_j^\pi) \rho_\G(U_\pi(e_l^\pi,  \delta_\G^{-1/4}e_j^\pi)^*)\otimes  \rho_\X(X_\pi(\delta_\X^{-1/4}\nu, e_l^ \pi)^*).
\end{align*}
We then note that
    $$V_{\G,24}(\varphi_\G)(L^2(\X)\otimes L^2(\G)\otimes L^2(\X))= L^2(\X)\otimes \C \xi_\G \otimes L^2(\X)\cong L^2(\X)\otimes L^2(\X),$$
so that for $x,y \in \mcO(\X)$,
\begin{align*}
    &\theta_\square^{C_{L^\infty(\X)}^\G}(Z_\pi(\mu, \nu))(\Lambda_\X(x)\otimes \xi_\G \otimes \Lambda_\X(y))\\
    &= \sum_{j,k,l=1}^{d_\pi} \pi_\X(X_\pi(\mu, e_k^\pi))\Lambda_\X(x) \otimes U_\pi(e_k^\pi, e_j^\pi) \rho_\G(U_\pi(e_l^\pi, \delta_\G^{-1/4} e_j^\pi)^*) \xi_\G \otimes \rho_\X(X_\pi(\delta_\X^{-1/4}\nu, e_l^\pi)^*)\Lambda_\X(y)\\
    &= \sum_{j,k,l=1}^{d_\pi} \pi_\X(X_\pi(\mu, e_k^\pi))\Lambda_\X(x) \otimes \Lambda_\G(U_\pi(e_k^\pi, e_j^\pi) U_\pi(\delta_\G^{1/4}e_l^\pi,  e_j^\pi)^*)\otimes \rho_\X(X_\pi(\delta_\X^{-1/4}\nu, e_l^\pi)^*)\Lambda_\X(y)\\
    &= \sum_{k,l=1}^{d_\pi} \pi_\X(X_\pi(\mu, e_k^\pi))\Lambda_\X(x) \otimes  \langle e_k^\pi, \delta_\G^{1/4}e_l^\pi\rangle  \xi_\G \otimes \rho_\X(X_\pi(\delta_\X^{-1/4}\nu, e_l^\pi)^*) \Lambda_\X(y)\\
    &= \sum_{k=1}^{d_\pi} \pi_\X(X_\pi(\mu, e_k^\pi))\Lambda_\X(x) \otimes \xi_\G \otimes \rho_\X(X_\pi(\delta_\X^{-1/4} \nu, \delta_\G^{1/4}e_k^\pi)^*)\Lambda_\X(y).
\end{align*}
Thus, the $*$-representation $\theta_\square^{C_{L^\infty(\X)}^\G}: \mcO(\XGX)\to B(L^2(\X)\otimes L^2(\X))$ is given by
$$ \theta_\square^{C_{L^\infty(\X)}^\G}(Z_\pi(\mu, \nu))= \sum_{k=1}^{d_\pi} \pi_\X(X_\pi(\mu, e_k^\pi)) \otimes \rho_\X(X_\pi(\delta_\X^{-1/4}\nu, \delta_\G^{1/4} e_k^\pi))^*, \quad \pi \in \Rep(\G), \quad \mu, \nu \in \mcG_\pi.$$
The map
$\psi: L^\infty(\X) \ovot L^\infty(\bar{\X})\to B(L^2(\X)\otimes L^2(\X)): x \otimes \bar{y} \mapsto \pi_\X(x)\otimes \rho_\X(y)^*$ 
is isometric and it satisfies 
$\theta_\square^{C_{L^\infty(\X)}^\G}(z) = \psi(z)$ for $z\in \mcO(\XGX)$. 
\end{proof} 

Recall from \eqref{cornernorm} the definition of the norm $\|\cdot\|_u$ on $\mcO(\XGX)$.
 It is a consequence of \eqref{normcoarse} that $\|z\|_r\le \|z\|_u$ for $z\in \mcO(\XGX)$. Thus, there is a unique surjective $*$-homomorphism $$\lambda: C^u(\XGX)\to C_\mcO(\XGX)$$
that is the identity on $\mcO(\XGX)$.

 It is also clear that $C_\mcO(\XGX)^*$
 becomes a Banach algebra for the convolution product coming from the coassociative map $\Delta_{\XGX}^r: C_\mcO(\XGX)\to C_\mcO(\XGX)\otimes C_\mcO(\XGX)$.
We now arrive at the main result of this section:

\begin{Theorem}\label{counitbounded} Let $\G$ be a compact quantum group and let $(L^\infty(\X), \alpha)$ be an ergodic $\G$-$W^*$-algebra. The following are equivalent:
\begin{enumerate}
    \item $(L^\infty(\X), \alpha)$ is $\G$-injective.
    \item\vspace{-1.5mm}$(L^\infty(\X), \alpha)$ is strongly $\G$-injective.
    \item\vspace{-1.5mm} $|\epsilon_{\XGX}(z)| \le \|z\|_r$ for all $z\in \mcO(\XGX)$.
    \item\vspace{-1.5mm} $\lambda: C^u(\XGX)\to C_\mcO(\XGX)$ is a $*$-isomorphism. 
     \item\vspace{-1.5mm} The Banach algebra $C_\mcO(\XGX)^*$ is unital.
\end{enumerate}

\end{Theorem}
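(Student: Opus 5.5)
The plan is to prove the cycle $(1)\Leftrightarrow(2)\Rightarrow(4)\Rightarrow(3)\Rightarrow(2)$ and then attach $(5)$ via $(3)\Leftrightarrow(5)$.

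\textbf{The equivalence $(1)\Leftrightarrow(2)$.} By definition, (strong) $\G$-injectivity of $L^\infty(\X)$ is (strong) $\G$-amenability of the inclusion $\pi_\X(L^\infty(\X))\ovot\C1\subseteq B(L^2(\X))\ovot L^\infty(\G)$. Since $\G$ is compact, $L^\infty(\X)$ carries the faithful normal state $\varphi_\X$. Hence $B(L^2(\X))\ovot L^\infty(\G)$ is $\sigma$-finite whenever $L^2(\X)$ is separable. In general one should check that the cited result \cite{DCDR25}*{Theorem 3.1} only needs what is available here. Granting that, (1) and (2) are equivalent.

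\textbf{The implications $(2)\Rightarrow(4)\Rightarrow(3)$.} Assume (2). Proposition \ref{charstronginj} gives $\mcH\preccurlyeq C^\G_{L^\infty(\X)}$ for every $\mcH\in\Corr^\G(L^\infty(\X),L^\infty(\X))$. Weak containment of equivariant correspondences should imply weak containment of the induced representations $\theta^\mcH\preccurlyeq\theta^{C}$ of $\mcO(\XrX)$, and therefore of their corners $\theta_\square^\mcH\preccurlyeq\theta_\square^{C}$ after compressing by $\varphi_\G$. This yields $\|\theta^\mcH_\square(z)\|\le\|\theta^{C}_\square(z)\|$. Combining this with \eqref{cornernorm} and \eqref{normcoarse} gives $\|z\|_u\le\|z\|_r$. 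Since $\|z\|_r\le\|z\|_u$ always holds, $\lambda$ is isometric on $\mcO(\XGX)$ and hence an isomorphism, which is (4). For $(4)\Rightarrow(3)$, Theorem \ref{mainCV}(1) gives $|\epsilon_{\XGX}(z)|\le\|z\|_u=\|z\|_r$.

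\textbf{The implication $(3)\Rightarrow(2)$, the main obstacle.} The proof of Theorem \ref{mainCV}(1) shows that $\epsilon_{\XGX}=\theta^{L^2(\X)}_\square$ on the one-dimensional space $U_\X(\varphi_\G)L^2(\X)=\C\xi_\X$. Condition (3) says this state is bounded by the representation $\theta^C_\square$, so it extends to a state on $C_\mcO(\XGX)$ that is weakly contained in $\theta_\square^C$. I would transfer this back to $\mcO(\XrX)$ through the corner isomorphism $\kappa$ of Proposition \ref{corner}. Every element of $\mcO(\XrX)$ is of the form $\sum a_i\varphi_\G b_i$, up to the full-corner structure, with $\varphi_\G$ a projection in the multiplier algebra. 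On the representation side, $\theta^{L^2(\X)}$ is generated by the cyclic vector $\xi_\X\in U_\X(\varphi_\G)L^2(\X)$. Consequently, weak containment of the compressed corner representations should lift to weak containment $\theta^{L^2(\X)}\preccurlyeq\theta^C$ of the full $*$-representations. The step I expect to need the most care is the final identification: via \cite{DCDR25}*{Proposition 2.14} and the definition of equivariant weak containment in \cite{DCDR24}, representation-level weak containment should correspond to $L^2(\X)\preccurlyeq C^\G_{L^\infty(\X)}$, which is (2).

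\textbf{The equivalence $(3)\Leftrightarrow(5)$.} For $(3)\Rightarrow(5)$, the bounded counit extends to $C_\mcO(\XGX)$. It is a unit for convolution: for $\omega\in C_\mcO(\XGX)^*$, the functionals $(\epsilon\otimes\omega)\Delta^r_{\XGX}$ and $\omega$ agree on $\mcO(\XGX)$ by the counit property, and they are continuous, so they agree everywhere; the same holds on the other side. For $(5)\Rightarrow(3)$, let $e$ be the unit. Test $e\star\omega=\omega$ against functionals $\omega=\varphi_{\XGX}(a\,\cdot)$ with $a\in\mcO(\XGX)$. Using Proposition \ref{duality} and the orthogonality relations \eqref{orthogonality5}, this shows that $\mathscr F_\pi(e|_{\mcO})=1$ for every $\pi$, so $e=\epsilon_{\XGX}$ on $\mcO(\XGX)$. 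Since $e$ is bounded, (3) follows.
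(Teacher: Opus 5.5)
\textbf{Gap: the direction $(1)\Rightarrow(2)$.} This is the hard direction of the theorem, and your proposal assumes it rather than proves it. The equivalence of $\G$-amenability and strong $\G$-amenability from \cite{DCDR25} is only available when the larger von Neumann algebra, here $B(L^2(\X))\ovot L^\infty(\G)$, is $\sigma$-finite. That holds exactly when $L^2(\X)$ is separable.

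The faithful state $\varphi_\X$ only makes $L^\infty(\X)$ $\sigma$-finite, not $B(L^2(\X))$. By \eqref{unitaryiso}, $L^2(\X)$ is separable if and only if only countably many $\pi\in\Irr(\G)$ have $\mcG_\pi\neq 0$. This fails, for instance, for $\X=\G$ when $\G$ is not second countable. So your ``granting that'' covers precisely the case the theorem has to handle. As written, your cycle links $(1)$ to the other conditions only through $(2)\Rightarrow(1)$, which is fine in general by \cite{DCDR24}*{Theorem 4.3}.

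\textbf{Missing idea: reduce to a countable situation, one element at a time.} Condition $(3)$ is an estimate for one element at a time, so you should prove $(1)\Rightarrow(3)$, not $(1)\Rightarrow(2)$. Proceed as follows.
\begin{enumerate}
\item Fix $z=\sum_j x_j\otimes\overline{y_j}\in\mcO(\XGX)$. Choose a Hopf $*$-subalgebra $\mcO(\H)\subseteq\mcO(\G)$ of countable dimension containing the coefficients of all $\alpha(x_j)$ and $\bar{\alpha}(\overline{y_j})$.
\item Let $L^\infty(\X_\H)$ be the von Neumann algebra generated by the spectral subspaces $\mcO(\X)_\pi$ with $\pi\in\Irr(\H)$.
\item The $\G$-equivariant normal conditional expectation $L^\infty(\X)\to L^\infty(\X_\H)$ (killing the other spectral subspaces) shows that $L^\infty(\X_\H)$ is $\G$-injective. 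By \cite{DR26}*{Proposition 3.10} it is then $\H$-injective.
\item Now $L^2(\X_\H)$ is separable, so the $\sigma$-finite result of \cite{DCDR25} applies and gives strong $\H$-injectivity.
\item Apply $(2)\Rightarrow(3)$ to the pair $(\H,\X_\H)$. The element $z$ lies in $\mcO(\X_\H\times_\H\overline{\X_\H})$ and has the same counit and the same reduced norm there as in $\mcO(\XGX)$. This yields $|\epsilon_{\XGX}(z)|\le\|z\|_r$.
\end{enumerate}
Strong $\G$-injectivity in the non-separable case then follows only a posteriori, via $(3)\Rightarrow(2)$.

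\textbf{The remaining steps match the paper.} This covers $(2)\Rightarrow(4)$ via Proposition \ref{charstronginj} and \eqref{normcoarse}, $(4)\Rightarrow(3)$, and $(3)\Leftrightarrow(5)$. For $(3)\Rightarrow(2)$ the paper simply cites \cite{DCDR25}*{Proposition 2.11}. Your cyclic-vector sketch of that step is sound, but drop the appeal to a ``full corner'': $\varphi_\G$ need not be a full projection. None is needed, because a state $\omega$ with $\omega(\varphi_\G)=1$ already satisfies $\omega=\omega(\varphi_\G\,\cdot\,\varphi_\G)$.
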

\begin{proof} The equivalence $(2) \iff (3)$ is an immediate consequence of $\epsilon_{\XGX}= \theta_\square^{L^2(\X)}$ (see the proof of Theorem \ref{mainCV}), \eqref{normcoarse} and \cite{DCDR25}*{Proposition 2.11}. The implication $(4)\implies (3)$ is trivial. To prove that $(2)\implies (4)$, we note that strong $\G$-injectivity of $L^\infty(\X)$ implies that $$\|\theta_\square^\mcH(z)\| \le \|\theta_{\square}^{C_{L^\infty(\X)}^\G}(z)\|= \|z\|_r, \quad \mcH \in \Corr^\G(L^\infty(\X), L^\infty(\X)), \quad z \in \mcO(\XGX),$$
by using Proposition \ref{charstronginj}. Consequently, $\|z\|_u\le \|z\|_r$, and $\lambda$ is isometric. We now prove $(1)\implies (3)$. The argument proceeds in the same spirit as \cite{DCDR25}*{Theorem 3.6}, by reducing to the case where $\G$ is second countable. Fix $z= \sum_{j=1}^n x_j\otimes \overline{y_j}\in \mcO(\XGX)$. 
Choose a Hopf $*$-subalgebra $\mcO(\H)\subseteq \mcO(\G)$ of countable dimension with the property that
$$\alpha(x_j) \in \mcO(\X)\odot \mcO(\H), \quad \bar{\alpha}(\overline{y_j})\in \mcO(\H)\odot \mcO(\bar{\X}), \quad j=1, \dots, n.$$
As the notation suggests, it is not hard to see that this Hopf $*$-subalgebra is associated to a compact quantum group $\H$. Moreover, the modular data of $\H$ is given by restriction of the modular data of $\G$. We may assume that $\Irr(\H)\subseteq \Irr(\G)$. We write $L^\infty(\H)$ for the von Neumann subalgebra of $L^\infty(\G)$ generated by $\mcO(\H)$ and $L^\infty(\X_\H)$ for the von Neumann subalgebra of $L^\infty(\X)$ generated by $\sum_{\pi \in \Irr(\H)} \mcO(\X)_\pi$. Then $\alpha(L^\infty(\X_\H))\subseteq L^\infty(\X_\H) \ovot L^\infty(\H)\subseteq L^\infty(\X_\H)\ovot L^\infty(\G)$. In particular, we have ergodic actions $L^\infty(\X_\H)\stackrel{\alpha_\H}\curvearrowleft \H$ and $L^\infty(\X_\H)\stackrel{\alpha}\curvearrowleft \G$.

Considering the natural $\G$-equivariant normal conditional expectation $L^\infty(\X)\to L^\infty(\X_\H)$ (which kills the spectral subspaces associated to $\pi \in \Irr(\G)\backslash \Irr(\H)$), it follows that $L^\infty(\X_\H)$ is $\G$-injective. Making use of \cite{DR26}*{Proposition 3.10}, we see that $L^\infty(\X_\H)$ is $\H$-injective as well, which means that the inclusion $\pi_{\X_\H}(L^\infty(\X_\H))\ovot \C1 \subseteq B(L^2(\X_\H))\ovot L^\infty(\X_\H)$ is $\H$-amenable. Since $\mcO(\H)$ has countable dimension, it follows from \eqref{unitaryiso} that $L^2(\X_\H)$ is separable, so that the von Neumann algebra $B(L^2(\X_\H))\ovot L^\infty(\X_\H)$ is $\sigma$-finite. Consequently, it follows from \cite{DCDR25}*{Theorem 3.10} that the inclusion $\pi_{\X_\H}(L^\infty(\X_\H))\ovot \C1 \subseteq B(L^2(\X_\H))\ovot L^\infty(\X_\H)$ is also strongly $\H$-amenable, i.e.\ $(L^\infty(\X_\H), \alpha_\H)$ is strongly $\H$-injective. But since we already proved that $(2)\implies (3)$ holds and since $z\in \mcO(\X_\H\times_{\H}\overline{\X_\H})$, we see that
$$|\epsilon_{\XGX}(z)| = |\epsilon_{\X_\H\times_{\H}\overline{\X_\H}}(z)| \le \|z\|_{L^\infty(\X_\H)\ovot L^\infty(\bar{\X}_\H)} = \|z\|_r,$$
and the implication is proven. The equivalence $(3)\iff (5)$ is trivial.
\end{proof}

\begin{Def}
    We call $\XGX$ coamenable if the equivalent conditions in Theorem \ref{counitbounded} are satisfied.
\end{Def}

The following result is surely well-known, but we are unaware of an explicit reference for it in the literature.

\begin{Cor}\label{coro}
The quantum dimension function
$\mcO(\operatorname{Fus}(\G))\to \C: \chi(\pi)\mapsto \dim_q(\pi)$
is bounded (for the norm coming from the inclusion $\mcO(\operatorname{Fus}(\G))\subseteq L^\infty(\G)$) if and only if $\G$ is Kac and coamenable.
\end{Cor}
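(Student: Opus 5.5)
Apply Theorem \ref{counitbounded} to the ergodic action $L^\infty(\G)\stackrel{\alpha}\curvearrowleft \G^{\op}\times\G$ from Subsection \ref{fusion algebra}. Under the identification $L^\infty(\operatorname{Fus}[\G])\cong L^\infty(\G\times_{\G^{\op}\times\G}\bar{\G})$ described there, $Z_{\tilde{\pi}\times\pi}(\mu_\pi,\mu_\pi)$ corresponds to $\chi_q(\pi)=\dim_q(\pi)^{-1}\chi(\pi)$. By Proposition \ref{spectraldata}, $\epsilon(Z_{\tilde\pi\times\pi}(\mu_\pi,\mu_\pi))=\|\mu_\pi\|^2=1$, so the transported counit is $\chi(\pi)\mapsto\dim_q(\pi)$: it is exactly the quantum dimension function. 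The first check is that the reduced norm $\|\cdot\|_r$ on $\mcO(\G\times_{\G^{\op}\times\G}\bar{\G})$ agrees with the norm inherited from $\mcO(\operatorname{Fus}(\G))\subseteq L^\infty(\G)$. This should hold because the isomorphism \eqref{identification2} of Example \ref{coidealsex} is an isometric $*$-homomorphism between von Neumann algebras, composed with the $*$-isomorphism $\Delta_\G$, and both are isometric.

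By Theorem \ref{counitbounded}, boundedness of the quantum dimension function is therefore equivalent to $\G^{\op}\times\G$-injectivity of $(L^\infty(\G),\alpha)$. The remaining task is to show that this is equivalent to $\G$ being Kac and coamenable. This is where I expect the main obstacle, and I would rely on \cite{DCDR25}*{Section 4}, where this action was analysed. I would try to show that $\G^{\op}\times\G$-injectivity of $L^\infty(\G)$ amounts to existence of a conditional expectation $B(L^2(\G))\to L^\infty(\G)$ that is equivariant for both the left and right implementations, that is, an inner-amenability-type condition on $\hat{\G}$. For the direction ``Kac and coamenable $\Rightarrow$ injective'' I would construct such an expectation directly from a $\hat{\G}$-invariant mean when $\hat\G$ is amenable and unimodular (Kac).

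For the converse, if the reduction above is not available I would argue directly. A bounded character $\chi(\pi)\mapsto\dim_q(\pi)$ on $C(\operatorname{Fus}[\G])$ gives $\dim_q(\pi)\le\|\chi(\pi)\|\le d_\pi$, and $\dim_q(\pi)\ge d_\pi$ holds always, so $\dim_q(\pi)=d_\pi$ for all $\pi$. This forces $\delta_\G=1$, i.e.\ $\G$ is Kac. The character is then $\chi(\pi)\mapsto d_\pi=\|\chi(\pi)\|$. In the Kac case, boundedness of this map on the character algebra is known to be equivalent to the Kesten-type criterion for amenability of $\hat\G$, hence to coamenability of $\G$ by Tomatsu \cite{Tom06}. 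Conversely, if $\G$ is Kac and coamenable, the counit $\epsilon_\G$ is bounded on $C(\G)$ and restricts on characters to $\chi(\pi)\mapsto d_\pi=\dim_q(\pi)$, which gives boundedness. This direct route avoids equivariant injectivity entirely, so I would present it as the primary argument and use Theorem \ref{counitbounded} only to phrase the statement as coamenability of $\G\times_{\G^{\op}\times\G}\bar\G$.
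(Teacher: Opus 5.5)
Your proposal is correct, and the argument you designate as primary differs genuinely from the paper's.

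The paper's proof consists of your first paragraph followed by a citation. Theorem \ref{counitbounded}, together with the isometric identification of Subsection \ref{fusion algebra} (under which $\dim_q(\pi)Z_{\tilde{\pi}\times\pi}(\mu_\pi,\mu_\pi)$ corresponds to $\chi(\pi)$), turns boundedness of the quantum dimension function into $\G^{\op}\times\G$-injectivity of $L^\infty(\G)$. The paper then quotes \cite{DR25a}*{Proposition 4.7} for the equivalence with ``Kac and coamenable''. Your inner-amenability sketch was an attempt to reprove that external step, and it can be dropped.

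Your direct route bypasses both Theorem \ref{counitbounded} and \cite{DR25a}. The map $\chi(\pi)\mapsto\dim_q(\pi)$ is a unital $*$-character of the fusion algebra (for instance, as the transported counit), and bounded characters are contractive. The chain $\dim_q(\pi)\le\|\chi(\pi)\|\le d_\pi\le\dim_q(\pi)$ then forces $\delta_\G=1$. The converse direction is just restriction of the bounded counit $\epsilon_\G$ to characters.

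What each route buys:
\begin{itemize}
\item The paper's route presents the corollary as an instance of the coamenability theorem for $\XGX$.
\item Yours is elementary. Read together with Theorem \ref{counitbounded}, it also gives an independent proof of \cite{DR25a}*{Proposition 4.7}.
\end{itemize}

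The one step you outsource is that Kac plus boundedness of $\chi(\pi)\mapsto d_\pi$ implies coamenability. This can be made self-contained without a Kesten criterion or \cite{Tom06}. Since $\varphi_\G$ is a trace, there is a $\varphi_\G$-preserving normal conditional expectation $E$ onto the $\sigma$-weak closure of $\mcO(\operatorname{Fus}[\G])$. The orthogonality relation \eqref{EqPW2} gives $E(U_\pi(\xi,\eta))=d_\pi^{-1}\langle\xi,\eta\rangle\chi(\pi)$. Hence, for $x\in\mcO(\G)$, $\epsilon_\G(x)$ is the dimension function evaluated at $E(x)$, and $|\epsilon_\G(x)|\le\|E(x)\|\le\|x\|$. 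This is the same conditional-expectation trick the paper uses in its final corollary.
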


\begin{proof}
    By Theorem \ref{counitbounded} and the discussion in Subsection \ref{fusion algebra}, the quantum dimension function is bounded on $\mcO(\operatorname{Fus}(\G))$ if and only if $L^\infty(\G)$ is $\G^{\op}\times \G$-injective, which is equivalent with $\G$ being Kac and coamenable by \cite{DR25a}*{Proposition 4.7}.
\end{proof}
\begin{Rem} Coamenability of a (reduced) compact quantum group $\G$ is also characterized through the existence of a character on the $C^*$-algebra $C(\G)$. However, the existence of a character on the $C^*$-algebra $C_\mcO(\XGX)$ is not sufficient to guarantee the coamenability of $\XGX$. To see this, let $\H$ be a non-Kac coamenable compact quantum group (e.g. $\H = SU_q(2)$), $\X= \H$, $\G= \H^{\op}\times \H$ and consider the natural action $L^\infty(\X)\curvearrowleft \G$ studied in Subsection \ref{fusion algebra}. Since $\H$ is coamenable, the counit $\epsilon_\H$ is bounded on $\mcO(\operatorname{Fus}(\H))\cong \mcO(\XGX)$, and hence induces a character on $C_\mcO(\XGX)$. However, from Corollary \ref{coro}, the counit $\epsilon_{\XGX}$ (corresponding to the quantum dimension function) is not bounded. 
\end{Rem}

The following result was established in the papers \cites{AK24, DR25a}. We can now give a new proof of the hard part:
\begin{Cor} Let $\G$ be a compact quantum group. The following are equivalent for a coideal von Neumann algebra $L^\infty(\H\backslash \G)$:
\begin{enumerate}
    \item $(L^\infty(\H\backslash \G), \Delta_\G)$ is $\G$-injective.
    \item \vspace{-1.5mm} $\epsilon_\G: \mcO(\H\backslash \G/\H)\to \C$ is bounded (for the reduced norm) and $\H$ is a compact quasi-subgroup of $\G$.
     \item\vspace{-1.5mm} $\epsilon_\G: \mcO(\H\backslash \G)\to \C$ is bounded (for the reduced norm) and $\H$ is a compact quasi-subgroup of $\G$.
\end{enumerate}
\end{Cor}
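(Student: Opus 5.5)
The plan is to run the cycle $(1)\Rightarrow(3)\Rightarrow(2)\Rightarrow(1)$, leaning on Theorem \ref{counitbounded} applied to $\X=\H\backslash\G$ and on the identification $\mcO(\XGX)\cong\mcO(\H\backslash\G/\H)$ from Proposition \ref{needed}.

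\emph{Step $(1)\Rightarrow(3)$.} I would expect this to be the part already covered by \cite{AK24, DR25a}. First I would show that $\G$-injectivity forces $\H$ to be a compact quasi-subgroup. $\G$-injectivity gives a $\G$-equivariant conditional expectation $B(L^2(\H\backslash\G))\ovot L^\infty(\G)\to L^\infty(\H\backslash\G)$. Restricting it to the $\G$-invariant copy of $L^\infty(\G)$ (realised via $\Delta_\G$ on $L^2(\G)$) should produce a $\G$-equivariant, hence $\varphi_\G$-preserving, conditional expectation $L^\infty(\G)\to L^\infty(\H\backslash\G)$. This gives condition (2) of Proposition \ref{compact quasi-subgroup}. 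Boundedness of $\epsilon_\G$ on $\mcO(\H\backslash\G)$ for the reduced norm is then the coamenability of $\H\backslash\G$ in the sense of \cite{AK24}, which I would quote.

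\emph{Step $(3)\Rightarrow(2)$.} This is immediate, since $\mcO(\H\backslash\G/\H)\subseteq\mcO(\H\backslash\G)$ and both carry the norm of $L^\infty(\G)$.

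\emph{Step $(2)\Rightarrow(1)$.} This is the hard part, and here the paper's machinery enters. By Theorem \ref{counitbounded}, it suffices to show that $|\epsilon_{\XGX}(z)|\le\|z\|_r$ on $\mcO(\XGX)$ for $\X=\H\backslash\G$.
\begin{itemize}
\item Under the isomorphism \eqref{identification2}, $a\mapsto(\id\otimes\phi)\Delta_\G(a)$, the reduced norm of $\mcO(\XGX)$ inside $L^\infty(\X)\ovot L^\infty(\bar\X)$ equals the norm of $a$ in $L^\infty(\G)$, because \eqref{identification2} is an isometric $*$-homomorphism. So it remains to identify the counit.
\item Since $\H$ is a compact quasi-subgroup, Proposition \ref{compact quasi-subgroup}(4) gives $\tilde\delta_{\H\backslash\G}=\delta_\G$ on $\varphi_\H\mcH_\pi$. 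Feeding this into the formula after \eqref{compl},
$$\epsilon_{\H\backslash\G/\H}\bigl(U_\pi(\xi,\delta_\G^{1/4}\eta)\bigr)=\langle\xi,\tilde\delta_{\H\backslash\G}^{1/4}\eta\rangle=\langle\xi,\delta_\G^{1/4}\eta\rangle=\epsilon_\G\bigl(U_\pi(\xi,\delta_\G^{1/4}\eta)\bigr).$$
Hence $\epsilon_{\XGX}$ transports exactly to $\epsilon_\G|_{\mcO(\H\backslash\G/\H)}$.
\item The hypothesis in (2) is precisely $|\epsilon_\G(a)|\le\|a\|$ on $\mcO(\H\backslash\G/\H)$. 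So condition (3) of Theorem \ref{counitbounded} holds, and $L^\infty(\H\backslash\G)$ is $\G$-injective.
\end{itemize}

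The main subtlety I expect is the bookkeeping in that identification, in particular making sure the normalisations of $Z_\pi$ from Proposition \ref{needed} match the counit. The quasi-subgroup assumption is exactly what removes the $\tilde\delta$ versus $\delta_\G$ discrepancy. Without it, the transported counit is a deformed functional rather than $\epsilon_\G$, which explains why that hypothesis appears in (2).
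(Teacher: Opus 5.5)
Your steps $(3)\Rightarrow(2)$ and $(2)\Rightarrow(1)$ are correct and agree with the paper. Under \eqref{identification2} the reduced norm becomes the norm of $L^\infty(\G)$. The quasi-subgroup condition $\tilde\delta_{\H\backslash\G}=\delta_\G$ on $\varphi_\H\mcH_\pi$ turns $\epsilon_{\XGX}$ into $\epsilon_\G|_{\mcO(\H\backslash\G/\H)}$, and Theorem \ref{counitbounded} finishes.

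The gap is in $(1)\Rightarrow(3)$. After obtaining the quasi-subgroup property, you simply quote boundedness of $\epsilon_\G$ on $\mcO(\H\backslash\G)$ from the literature. But ``$\G$-injectivity forces the counit to be bounded'' is the Tomatsu-type, genuinely hard half of the corollary. It is exactly the part the paper says it gives a new proof of, so citing it assumes the substance of what is to be proved. You also have the difficulty backwards. $(2)\Rightarrow(1)$ is the soft direction: a bounded counit gives strong $\G$-injectivity, which gives $\G$-injectivity. The direction from injectivity to a bounded counit is where the work lies.

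Two ingredients are missing.

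\emph{First}, your counit identification works in both directions. Once (1) yields that $\H$ is a compact quasi-subgroup, the implication $(1)\Rightarrow(3)$ of Theorem \ref{counitbounded} already gives boundedness of $\epsilon_\G$ on $\mcO(\H\backslash\G/\H)$, i.e.\ (2). For the quasi-subgroup step, your sketch is fine in spirit. What you restrict the expectation along is really an equivariant ucp map $L^\infty(\G)\to B(L^2(\H\backslash\G))\ovot L^\infty(\G)$ extending the inclusion, and $\varphi_\G$-invariance makes the resulting expectation normal. Alternatively, cite \cite{AK24}*{Proposition 5.8} as the paper does.

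\emph{Second}, you need a real argument for $(2)\Rightarrow(3)$, passing from the double coset algebra to $\mcO(\H\backslash\G)$. The paper uses conditional expectations.
\begin{itemize}
\item Since $\varphi_\H$ is positive, $E(a)=(\varphi_\H\odot\id)\Delta_\G(a)$ and $F(a)=(\id\odot\varphi_\H)\Delta_\G(a)$ extend to normal conditional expectations of $L^\infty(\G)$ onto $L^\infty(\H\backslash\G)$ and $R_\G(L^\infty(\H\backslash\G))$ respectively.
\item They commute by coassociativity, so $E\circ F$ is a conditional expectation onto $L^\infty(\H\backslash\G/\H)$.
\item For $x\in\mcO(\H\backslash\G)$ one has $EF(x)=F(x)\in\mcO(\H\backslash\G/\H)$ and $\epsilon_\G(F(x))=\varphi_\H(x)=\epsilon_\G(x)$.
\item Hence $|\epsilon_\G(x)|\le\|EF(x)\|\le\|x\|$.
\end{itemize}
This gives $(1)\Rightarrow(2)\Rightarrow(3)$, which together with your $(3)\Rightarrow(2)\Rightarrow(1)$ closes the equivalence without outsourcing the counit bound.
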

\begin{proof} If $L^\infty(\H\backslash \G)$ is $\G$-injective,  \cite{AK24}*{Proposition 5.8} implies that $\H$  is a compact quasi-subgroup of $\G$. In that case, under the isomorphism \eqref{identification2}, we have seen at the end of Subsection \ref{coideals} that $\epsilon_{\H\backslash \G/\H}$ is simply the restriction of the counit $\epsilon_\G$. Thus, the equivalence $(1) \iff (2)$ follows from Theorem \ref{counitbounded}. The implication $(3)\implies (2)$ is trivial. To prove the converse, assume that $(2)$ holds and recall from the proof of Proposition \ref{compact quasi-subgroup} that the projection \eqref{expectation} extends uniquely to a normal conditional expectation $E: L^\infty(\G)\to L^\infty(\H\backslash \G)$. Similarly, the
projection $F: \mcO(\G)\to R_\G(\mcO(\H\backslash \G)): a \mapsto (\id \odot \varphi_\H)\Delta_\G(a)$ extends to a normal conditional expectation $F: L^\infty(\G)\to R_\G(L^\infty(\H\backslash \G))$. We have $E\circ F = F \circ E$, so $E\circ F$ defines a normal conditional expectation $L^\infty(\G)\to L^\infty(\H\backslash \G/\H)$. If $x\in \mcO(\H\backslash \G)$, we then have
$|\epsilon_\G(x)|=|\epsilon_\G(EF(x))| \le \|EF(x)\| \le \|x\|$, and $(3)$ is proven.
\end{proof}

\section{Outlook} The work in this paper suggests the following lines of research:
\begin{itemize}
    \item As discussed in Remark \ref{completions}, the $C^*$-algebraic theory of compact quantum hypergroups developed in \cite{CV99} has some deficits. Is it possible to develop a satisfactory theory that resolves these? The $C^*$-algebra $C_\mcO(\XGX)$ arising from an ergodic compact quantum group action $L^\infty(\X)\curvearrowleft \G$ should then fit perfectly in such a theory.
    \item As is apparent from Section \ref{coamenable}, the theory of equivariant correspondences as developed in \cites{DCDR24, DCDR25} provides a conceptual bridge between structural properties of the compact quantum hypergroup $\XGX$ and dynamical properties of the $\G$-$W^*$-algebra $L^\infty(\X)$. A detailed technical development of this connection, together with its implementation in concrete examples, will appear in forthcoming work.
\end{itemize}

\textbf{Acknowledgements.} The author thanks B. Anderson-Sackaney, K. De Commer and A. Skalski for useful discussions and feedback. The author also acknowledges a useful conversation with M. Daws.

\end{document}